\newtheorem{thm}{Theorem}[section]
\newtheorem{cor}[thm]{Corollary}
\newtheorem{lem}[thm]{Lemma}
\newtheorem{prop}[thm]{Proposition}
\newtheorem{defn}[thm]{Definition}
\newtheorem{rem}[thm]{Remark}
\newcommand{\mc}{\mathcal}
\newcommand{\bs}{\mathbf {s}}
\newcommand{\ep}{\epsilon}
\newcommand{\Ac}{\mathcal{A}}
\newcommand{\Bc}{\mathcal{B}}
\newcommand{\Cb}{\mathbb{C}}
\newcommand{\Hc}{\mathcal{H}}
\newcommand{\Hb}{\mathbb{H}}
\newcommand{\Fc}{\mathcal{F}}
\newcommand{\Ic}{\mathcal{I}}
\newcommand{\Qc}{\mathcal{Q}}
\newcommand{\Oc}{\mathcal{O}}
\newcommand{\Qb}{\mathbb{Q}}
\newcommand{\Rc}{\mathcal{R}}
\newcommand{\Kf}{\mathfrak{K}}
\newcommand{\Lf}{\mathfrak{L}}
\newcommand{\Rf}{\mathfrak{R}}
\newcommand{\Rfu}{{\mathfrak{R}(\mathfrak{U})}}
\newcommand{\Rfut}{{\tilde{\mathfrak{R}}(\mathfrak{U})}}
\newcommand{\Rfust}{{\mathfrak{R}(\mathfrak{U}^*)}}
\newcommand{\Rfutst}{{\mathfrak{R}(\tilde{\mathfrak{U}}^*)}}
\newcommand{\Sc}{\mathcal{S}}
\newcommand{\Cf}{\mathfrak{C}}
\newcommand{\Xf}{\mathfrak{X}}
\newcommand{\Uc}{\mathcal{U}}
\newcommand{\Uf}{\mathfrak{U}}
\newcommand{\Uft}{\tilde{\mathfrak{U}}}
\newcommand{\Ufst}{{\mathfrak{U}^*}}
\newcommand{\Uftst}{{\tilde{\mathfrak{U}}^*}}
\newcommand{\om}{\omega}
\newcommand{\vb}{\mathbf{v}}
\newcommand{\Xc}{\mathcal{X}}
\newcommand{\Yc}{\mathcal{Y}}
\newcommand{\Zb}{\mathbb{Z}}
\newcommand{\Zc}{\mathcal{Z}}
\begin{document}

\title[A uniform classification of the discrete series]{A uniform classification of discrete series representations of affine Hecke algebras}

\author{Dan Ciubotaru}
        \address[D. Ciubotaru]{Mathematical Institute\\ University of
          Oxford\\ Andrew Wiles Building\\ Oxford, OX2 6GG, UK}
        \email{dan.ciubotaru@maths.ox.ac.uk}

\author{Eric Opdam}
\address[E. Opdam]{Korteweg-de Vries Institute for Mathematics\\Universiteit van Amsterdam\\Science Park 904\\ 1098 XH Amsterdam, The Netherlands}
\email{e.m.opdam@uva.nl}

\date{\today}
\keywords{Affine Hecke algebra, graded affine Hecke algebra, Dirac operator, 
discrete series representation}
\subjclass[2000]{Primary 20C08; Secondary 22D25, 43A30}
\thanks{This research was supported  by 
ERC-advanced grant no. 268105. It is a pleasure to thank 
Xuhua He and Maarten Solleveld for useful discussions and 
comments.}

\begin{abstract} We give a new and independent parameterization of 
the set of discrete series characters of an affine Hecke algebra 
$\mathcal{H}_{\vb}$, in terms of a canonically defined basis $\Bc_{gm}$ of a certain 
lattice of virtual elliptic characters of the underlying (extended) affine Weyl group. 
This classification applies to all semisimple affine Hecke algebras 
$\mathcal{H}$, and to all $\vb\in\Qc$, where $\Qc$ denotes the vector group of positive real 
(possibly unequal) Hecke parameters for $\mathcal{H}$. 
By analytic Dirac induction we define for each $b\in \Bc_{gm}$ a continuous (in the sense of \cite{OpdSol2}) family 
$\Qc^{reg}_b:=\Qc_b\backslash\Qc_b^{sing}\ni\vb\to\textup{Ind}_{D}(b;\vb)$, such that  
$\epsilon(b;\vb)\textup{Ind}_{D}(b;\vb)$ (for some $\epsilon(b;\vb)\in\{\pm 1\}$)
is an irreducible discrete series character of $\mathcal{H}_{\vb}$.
Here $\Qc^{sing}_b\subset\Qc$ is a finite union of hyperplanes in $\Qc$. 
 
In the non-simply laced cases we show that the families of virtual discrete series characters 
$\textup{Ind}_{D}(b;\vb)$ are piecewise rational in the parameters $\vb$.
Remarkably, the formal degree of $\textup{Ind}_{D}(b;\vb)$ in such piecewise rational 
family turns out to be rational. This implies that for each $b\in \Bc_{gm}$ there 
exists a universal rational constant $d_b$ determining  
the formal degree in the family of discrete series characters 
$\epsilon(b;\vb)\textup{Ind}_{D}(b;\vb)$. 
We will compute the canonical constants $d_b$, and the signs $\epsilon(b;\vb)$.
For certain geometric parameters we will provide the comparison with 
the Kazhdan-Lusztig-Langlands classification.
\end{abstract}
\maketitle
\tableofcontents
\section{Motivation and goals}

Let $\mathcal{R}=(X,R_0,Y,R_0^\vee,F_0)$ be a based root datum, with (extended) affine Weyl 
group $W:=W_0\ltimes X$, where $W_0=W(R_0)$ is the finite Weyl group associated with the 
root system $R_0$. Let $R_{0,+}$ denote the positive roots. We denote by $S_0$ the set of simple reflections of $W_0$. Associated to $\Rc$ 
one has a canonical Laurent polynomial algebra $\Lambda$ generated by invertible 
"Hecke parameters", 
and the generic extended affine Hecke algebra 
$\Hc_\Lambda$  over $\Lambda$ (see e.g. \cite{OpdSol2}). Let $\Qc$ be the real vector group of the 
algebraic torus associated with 
$\Lambda$. If $\vb\in\Qc$ then we denote by $\Hc_\vb$ the corresponding specialization of $\Hc_\Lambda$.
\subsubsection{Uniform classification of the discrete series}
Let $\overline{\Rc}_\Zb(W)$ be the lattice of elliptic virtual characters of $W$, equipped with the Euler-Poincar\' e
pairing, and let $\overline{\Rc}_\Zb(\Hc_\vb)$ denote the lattice of elliptic virtual characters of $\Hc_\vb$. If $\pi$ is an element of $\Rc_\Zb(\Hc_\vb)$, let us denote by $\overline\pi$ its image in $\overline{\Rc}_\Zb(\Hc_\vb)$. 

In this paper we will use a basic tool, the so-called "scaling map": 
\begin{align}\label{eq:lim}
\lim_{\vb\to 1}:~ \overline{\Rc}_\Zb(\Hc_{\vb})&\to \overline{\Rc}_\Zb(W)\\
\nonumber [\pi]&\to [\lim_{\ep\to 0}\pi_{\vb^\epsilon}]
\end{align}
where $\pi_{\vb^\epsilon}:=\pi\circ  j_\epsilon^{-1}$.
Here $j_\epsilon:\Hc^{an}_\vb(U)\to  \Hc^{an}_{\vb^\ep}(\sigma_\ep(U))$ ($\ep>0$) is the 
isomorphism between the analytic localizations $\Hc^{an}_\vb(U)$ and $\Hc^{an}_{\vb^\ep}(\sigma_\ep(U))$ 
of the affine Hecke algebra as introduced in \cite[Theorem 5.3]{Opd1}. 
It is easy to see that the family of isomorphisms 
$\{j_\epsilon^{-1}\}_{\epsilon>0}$ 
has a well defined limit at $\epsilon=0$, defining a homomorphism $i_0:\mathbb{C}[W]\to\Hc^{an}_\vb(U)$ 
(cf. \cite[Prop. 4.1.2]{Sol2}). 
This explains the existence of the desired "scaling map"  
$\lim_{\vb\to 1}:~ \overline{\Rc}_\Zb(\Hc_{\vb})\to \overline{\Rc}_\Zb(W)$ as in (\ref{eq:lim}). 
The isomorphisms $j_\epsilon$ ($\epsilon>0$) induce isometric isomorphisms \cite[Theorem 3.5(b)]{OpdSol}: 
\begin{equation}
(j_\epsilon^{-1})^*: \overline{\Rc}_\Zb(\Hc_{\vb})\to \overline{\Rc}_\Zb(\Hc_{\vb^\ep})
\end{equation}
Consequently, the limit $\lim_{\vb\to 1}$ of (\ref{eq:lim}) is an isometry too   
\cite[Theorem 3.5(b)]{OpdSol}.

Let us denote by $\Yc_{\vb}\subset \overline{\Rc}_\Zb(W)$ the image of this map, and 
by $\Yc_{\vb-m}\subset \Yc_{\vb}$ the image of the sublattice of $\overline{\Rc}_\Zb(\Hc_\vb)$ 
of the virtual discrete series characters of $\Hc_\vb$. 
Let $\Yc_{gm}\subset \overline{\Rc}_\Zb(W)$ 
be the smallest sublattice which contains all lattices $\Yc_{\vb-m}$ (see Definition \ref{prop:Ygm}). 
We call $\Yc_{gm}$ the lattice of 
\emph{generically massive} elliptic characters of $W$, and $\Yc_{\vb-m}$ the sublattice of 
$\vb$-massive elliptic characters of $W$.

The lattice $\Yc_{gm}$ possesses 
a distinguished orthonormal basis $\Bc_{gm}$ characterized by a positivity property to be explained below.
To each 
$b\in \Bc_{gm}$ we will assign a subset (nonempty by definition) $\Qc_b^{reg}\subset \Qc$ of the space of parameters by:  
\begin{equation*}
\Qc_b^{reg}:=\{\vb\in\Qc\mid \exists\text{\ an\ irreducible\ discrete\ series\ }\pi\text{\ of\ } \Hc_\vb\text{\ such\ that\ }
\lim_{\ep\to 0}\overline{\pi_{\vb^\ep}}=b\}
\end{equation*}
According to \cite{OpdSol2},  the complement $\Qc^{sing}_b$ of $\Qc^{reg}_b$ is a union of finitely many 
hyperplanes (depending on $b$).  
For $y\in\Yc_{gm}$ in general we put 
$\Qc_y^{reg}:=\cap_{b\in\textup{Supp}(y)}\Qc_b^{reg}$. 
Combining with the technique of analytic Dirac induction (introduced in \cite{COT} in the context of graded affine Hecke algebras)
we can associate a family of virtual discrete 
series characters $\Qc_y^{reg}\ni \vb\to 
\textup{Ind}_{D}(y;\vb)\in\Rc_\Zb(\Hc_\vb)$ (called the Dirac induction of $y$ at $\vb$), which depends 
linearly on $y$, for each fixed $\vb\in \Qc_y^{reg}$. If $b\in \Bc_{gm}$ and $\vb\in \Qc_b^{reg}$ then 
$\textup{Ind}_{D}(b;\vb)$ is an irreducible character up to a sign, characterized by the property 
$\lim_{\ep\to 0}(\overline{\textup{Ind}_{D}(y;\vb^\ep)})=y$.
We will prove (see Proposition \ref{prop:limlim}) that the family 
$\textup{Ind}_{D}(y;\vb)$ depends 
continuously on $\vb\in\Qc_y^{reg}$ in the sense of \cite{OpdSol2}. 
The ''Vogan conjecture'' (cf. \cite{COT}) allows to  compute the generic central character $W_0r_b$ of 
$\textup{Ind}_{D}(b)$ explicitly, where $r_b\in T_\Lambda:=\textup{Hom}(X,\Lambda^\times)$ is a 
\emph{generic 
residual point}  
in the sense of \cite{OpdSol2}. 

To such an orbit of generic residual points $W_0r_b$ we associated in \cite{OpdSol2} an explicit rational function 
$m_b:=m_{W_0r_b}$ on $\Qc$ which is regular on $\Qc$, and with the property that 
$\Qc_b^{reg}=\{\vb\in\Qc\mid m_b(\vb)\not=0\}$.
\begin{thm}\label{thm:uniform} Extend the notion of formal degree $\textup{fdeg}$ 
linearly to the Grothendieck group of finite dimensional tempered representations $\Rc_{\Zb,temp}(\Hc_\vb)$. 
Observe that in this way, the function $\textup{fdeg}$ naturally descends to 
$\overline{\Rc}_{\Zb,temp}(\Hc_\vb)=\overline{\Rc}_{\Zb}(\Hc_\vb)$.
\begin{itemize}
\item[(a)] $\Yc_{gm}$ has a \emph{unique orthonormal} basis 
$\Bc_{gm}$ such that for all $b\in \Bc_{gm}$, and for all $\vb\in\Qc_b^{reg}$, 
$d_b(\vb):=m_b(\vb)^{-1}\textup{fdeg}(\overline{\textup{Ind}_{D}(b;\vb)})>0$.
\item[(b)] 
$\overline{\textup{Ind}_{D}(b;\vb)}$ is represented by a virtual character $\textup{Ind}_{D}(b;\vb)$ of $\Hc_\vb$ which is 
plus or minus an irreducible discrete series. 
\item[(c)] The central character of $\textup{Ind}_{D}(b;\vb)$ is the specialization at $\vb$ of a $W_0$-orbit of generic residual points 
$W_0r_b$, with
$r_b\in T_\Lambda:=\textup{Hom}(X,\Lambda^\times)$.
\item[(d)]
The family $\textup{Ind}_{D}(b;\vb)$ depends 
continuously on $\vb\in\Qc_b^{reg}$ in the sense of \cite{OpdSol2}. 
 \item[(e)] For all $b\in\Bc_{gm}$, 
the signature function $\Qc_b^{reg}\ni\vb\to\epsilon(b;\vb)\in\{\pm 1\}$ such that $\epsilon(b;\vb)\textup{Ind}_{D}(b;\vb)$ 
is an irreducible discrete series character, is locally constant. 
\item[(f)] Define, for each $\vb\in \Qc$, $\Bc_{\vb-m}=\{b\in\Bc_{gm}\mid m_b(\vb)\not=0\}$. 
The assignment 
\begin{equation*}
\Bc_{\vb-m}\ni b\to \epsilon(b;\vb)\textup{Ind}_{D}(b;\vb)
\end{equation*}
yields a canonical bijection between $\Bc_{\vb-m}$ and the set of irreducible discrete series characters of $\Hc_\vb$.
\item[(g)] For all $b\in \Bc_{gm}$: 
$d_b(\vb)=d_b$ is \emph{independent} of $\vb\in \Qc_b^{reg}$ 
(where $d_b(\vb)$ is the 
positive function defined in (a)), and 
$d_b\in\Qb_+$. In other words, for all $\vb\in\Qc_b^{reg}$:
\begin{equation*}
\textup{fdeg}(\textup{Ind}_{D}(b;\vb))=d_bm_b(\vb)
\end{equation*}
(a rational function of $\vb$, regular in all points of $\Qc$, with zero locus $\Qc^{sing}_b$).
\end{itemize}
\end{thm}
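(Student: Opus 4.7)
The plan is to treat the seven assertions as a single package built on three pillars: (i) the Euler--Poincar\'e isometric scaling map $\lim_{\vb\to 1}$ of (\ref{eq:lim}), which lets us work inside the finite-rank Euclidean lattice $\overline{\Rc}_\Zb(W)$; (ii) analytic Dirac induction from \cite{COT} combined with the ``Vogan conjecture'', which produce an inverse to the scaling map on the discrete series side and compute central characters in terms of generic residual points $r_b$; and (iii) the continuity results for families of discrete series in \cite{OpdSol2}, which govern how irreducible discrete series vary with $\vb$ and give the rational function $m_b$.

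First I would establish (b) and (c). For $b\in\Bc_{gm}$ and $\vb\in\Qc_b^{reg}$, by definition there is an irreducible discrete series $\pi$ of $\Hc_\vb$ with $\lim_{\ep\to 0}\overline{\pi_{\vb^\ep}}=b$. Dirac induction applied to the elliptic class $b$, via the Vogan conjecture, produces a virtual tempered character whose generic central character is the $W_0$-orbit $W_0 r_b$ of a generic residual point, specializing at $\vb$ to the central character of $\pi$. Comparing $\lim_{\ep\to 0}\overline{\textup{Ind}_D(b;\vb^\ep)}=b=\lim_{\ep\to 0}\overline{\pi_{\vb^\ep}}$ and using the fact that the scaling map is an isometry \cite[Thm.~3.5(b)]{OpdSol}, one deduces $\overline{\textup{Ind}_D(b;\vb)}=\pm\overline\pi$, which yields (b), (c) and also identifies $\epsilon(b;\vb)$.

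Next, for (a) and (f), I would observe that $\Yc_{gm}$ inherits a positive-definite Euler--Poincar\'e form from $\overline{\Rc}_\Zb(W)$ and that the images $\lim_{\ep\to 0}\overline{\pi_{\vb^\ep}}$ of the irreducible discrete series of $\Hc_\vb$ form an orthonormal subset of $\Yc_{\vb-m}$ (because the irreducible discrete series are orthonormal in $\overline{\Rc}_\Zb(\Hc_\vb)$ and the scaling map is an isometry). The previous paragraph identifies this orthonormal set, up to signs, with $\{\pm b : b \in \Bc_{\vb-m}\}$. Imposing the positivity $d_b(\vb)>0$ pins down the signs, hence uniquely determines the basis $\Bc_{gm}$ of $\Yc_{gm}$; together with the injectivity and surjectivity statements just obtained, this gives the canonical bijection of (f). For (d) and (e) I would invoke directly the continuity and local constancy of \cite{OpdSol2}: irreducible discrete series of $\Hc_\vb$ organize into continuous families over the connected components of the complement in $\Qc$ of a locally finite union of hyperplanes, with central character tracking a generic residual point. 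Since $\textup{Ind}_D(b;\vb)$ is characterized up to sign by (b), (c), it must coincide up to $\epsilon(b;\vb)$ with such a family on $\Qc_b^{reg}$, and $\epsilon(b;\vb)\in\{\pm 1\}$ is then locally constant.

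The main obstacle is (g). One knows by (d) and the residue/Plancherel formulas of \cite{OpdSol2} that $\textup{fdeg}(\textup{Ind}_D(b;\vb))$ is a piecewise rational function of $\vb$, with the quotient $d_b(\vb)=m_b(\vb)^{-1}\textup{fdeg}(\overline{\textup{Ind}_D(b;\vb)})$ locally constant on each connected component of $\Qc_b^{reg}$. The content of (g) is that $d_b(\vb)$ is globally constant on $\Qc_b^{reg}$ and rational. The strategy is to show that the piecewise rational family $\textup{Ind}_D(b;\vb)$ extends across walls of $\Qc^{sing}_b$ compatibly with the sign $\epsilon(b;\vb)$: the analytic continuation properties of the Dirac operator and the continuity of $\textup{Ind}_D(b;\vb)$ in $\vb$ force the ratio $d_b(\vb)$ to take the same value on both sides of any wall in $\Qc_b^{reg}$, while rationality of $m_b$ and of $\textup{fdeg}$ restricted to a single component yields $d_b\in\Qb$; positivity from (a) then gives $d_b\in\Qb_+$. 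The subtle part here is ensuring that the sign $\epsilon(b;\vb)$ changes in exactly the correct way at walls so that $d_b(\vb)$ does not jump, and I expect this to require a careful case analysis matching the wall-crossing behavior of $\textup{fdeg}$ given by the Plancherel decomposition with the geometry of the hyperplane arrangement $\Qc_b^{sing}$.
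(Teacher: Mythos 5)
Your treatment of parts (b)--(f) follows essentially the same route as the paper: the isometric scaling map identifies elliptic classes, Dirac induction (at the graded level) combined with the Vogan conjecture produces $\textup{Ind}_D(b;\vb)$ with generic central character $W_0r_b$, and the continuity of families from \cite{OpdSol2} gives (d) and (e). One logical point worth flagging for (a): the \emph{uniqueness} of the sign choice making $d_b(\vb)>0$ for \emph{all} $\vb\in\Qc_b^{reg}$ is not automatic; it requires knowing that $d_b(\vb)$ has constant sign across the connected components of $\Qc_b^{reg}$. In the paper this is exactly what Theorem~\ref{thm:thm:fd} (i.e.\ part (g)) supplies, and Corollary~\ref{cor:unique} is derived \emph{after} it. You prove (a) first and (g) last, which quietly circularizes; on a single chamber you only get an orthonormal basis up to sign (that is Proposition~\ref{prop:Bgm}), not yet the canonical global one.

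The genuine gap is in (g). Your strategy is to cross walls: show that $\textup{Ind}_D(b;\vb)$ extends across hyperplanes of $\Qc^{sing}_b$ ``compatibly with the sign'' by invoking analytic continuation of a Dirac operator and continuity of the family. This does not work as stated, for two structural reasons. First, there is no Dirac operator on the affine Hecke algebra itself --- the paper explicitly says so (Section~\ref{sec:pure}, opening of Subsection 2.2); Dirac induction is defined indirectly via Lusztig reduction to graded Hecke algebras, so there is nothing to analytically continue at the affine level across a wall where the residual point degenerates. Second, the paper emphasizes that the family $\textup{Ind}_D(b;\vb)$ is \emph{not} continuous in any neighborhood of a singular parameter $\vb_0\in\Qc^{sing}_b$ --- that is precisely why (g) is presented as surprising. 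What happens near a wall is captured only after passing to a branched cover of a curve (Proposition~\ref{prop:basicM}), and the ``limit of discrete series'' one obtains that way is typically a reducible tempered character, not a discrete series; so the wall-crossing picture you need (formal degrees matching across the wall via a sign flip) simply is not available. The paper's actual proof of (g) does not cross walls at all. It cites \cite[Corollary~5.7]{CiuOpd1}, which asserts that the formal degree of $\textup{Ind}_D(b;\vb)$ is a \emph{globally rational} function of $\vb$ depending \emph{only} on the elliptic class $b$ (not on the chamber). Combined with the $\vb$-local form $\textup{fdeg}=d_b(C)\,m_b(\vb)$ from \cite[Theorem~4.6]{OpdSol2}, global rationality immediately forces all the chamber constants $d_b(C)$ to coincide and to lie in $\Qb$. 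Without this external input from \cite{CiuOpd1}, your argument has no mechanism to compare $d_b(C)$ across distinct connected components, and I do not see how the ``careful case analysis'' you gesture at could substitute for it.
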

This represents first of all a new classification of the 
discrete series of $\Hc_\vb$, which is uniform in the sense that it applies to all irreducible root data 
and all parameters $\vb\in \Qc$. It is explicit in the sense that for cases 
where a classification of discrete series has been given in other terms in the literature 
(e.g. in terms of Kazhdan-Lusztig-Langlands parameters) the comparison can be explicitly given.
The main tool for making this uniform classification explicit in specific cases is the "Vogan conjecture" which 
enables the explicit computation in terms of $b\in\Bc_{gm}$ of the central character $W_0r_b$. 

Let $b\in\Bc_{gm}$, let $C\subset \Qc_b$ be a connected component (an open cone in $\Qc$), and let 
$\vb_0\in\partial(C)\subset \Qc^{sing}_b$. 
An underlying issue is the behavior of the families $\textup{Ind}_{D}(b;\vb)$ near $\vb_0\in \Qc^{sing}_b$. 
These questions play a technical role in the proof of the above Theorem, and are of independent interest. 
To be sure, the family $\textup{Ind}_{D}(b;\vb)$ is \emph{not} continuous in any neighborhood of $\vb_0$, 
which is one of the reasons that Theorem \ref{thm:uniform}(g) is surprising and noteworthy. 
We will show that there is a sense in which the family $\textup{Ind}_{D}(b;\vb)$ can be extended along smooth 
curves in $\Qc_\Cb$ 
as an algebraic 
family of genuine characters, provided one lifts the condition that the characters in the family are discrete series 
characters.  More precisely, consider an affine smooth curve $\Cf\subset \Qc_\Cb$ which intersects $\overline{C}$ 
in a real curve containing $\vb_0\in\overline{C}$. 
Then $\Cf\cap C\ni\vb\to\textup{Ind}_{D}(b;\vb)$ extends as a rational family 
of generically irreducible genuine characters to a finite branched covering $\tilde\Cf$ of $\Cf$. Importantly, this family is 
unramified at points of $\Cf\cap \overline{C}$, and 
regular at the points of $\tilde\Cf$ lying above $\vb_0$. In particular one can define a limit $\textup{Ind}_{D}(b,C;\vb_0)$
at $\vb_0$ of the family of discrete series along $\Cf$ "from the direction of $C$". 
This "limit of discrete series" is a tempered character, which 
depends on $(C,\vb_0)$. We expect that it does not depend on the choice of $\Cf$. Notice that if $\Cf$ intersects 
the boundary of $C$ transversally at $\vb_0$ then $\Cf$ will also intersect the chamber $C_-$ opposite to $C$ with 
respect to $\vb_0$, and thus there exists also a limit $\textup{Ind}_{D}(b,C_-,\Cf;\vb)$. It would be interesting to investigate  how these two limits are related to each other, in terms of the relevant analytic R-group. 

\section{Massive pure elliptic virtual characters}\label{sec:pure}
\subsection{Elliptic virtual characters of affine Weyl groups}
\subsubsection{Elliptic virtual characters of affine Weyl groups}\label{subsub:ellaff}
We identify $X$ with the normal subgroup $\{e\}\times X\subset W$, 
and $W_0$ with the subgroup $W_0\times\{0\}$.  
Let $E=\mathbb{R}\otimes X$, then $W$ acts naturally on the Euclidean space $E$ 
as a group of affine isometries. 

The lattice $X \subset W$ is the normal subgroup of elements whose conjugacy class is 
finite. A centralizer of an element $x\in X$ is called a \emph{Levi subgroup} of $W$.
There are finitely many Levi subgroups of $W$, and this collection is conjugation invariant. 
Each Levi subgroup $L\subset W$ is itself 
an affine Weyl group $L=W_L\ltimes X$, where $W_L$ is a Levi subgroup of $W_0$ (the 
isotropy group of $x$ in $W_0$). Then $W_L$ is a Coxeter group, and has a unique   
set of simple reflections $S_L$ consisting of reflection $r_\alpha\in W_0$ with  
$\alpha\subset R_{0,+}$. Every Levi subgroup is conjugate to a standard Levi subgroup.
We call $L$ \emph{standard} if $S_L\subset S_0$.
 
An element $w\in W$ is called \emph{elliptic} if $w$ does not belong to any proper 
Levi subgroup $L\subset W$ (see \cite{OpdSol}). The following are easily seen to be equivalent:
\begin{enumerate}
\item[(a)] $w\in W$ is elliptic.  
\item[(b)] The canonical image of $w$ in $W_0$ is elliptic (w.r.t the action of $W_0$ on $E$).
\item[(c)] The centralizer of $w$ in $W$ is finite.
\item[(d)] The conjugacy class of $w$ is a union of left (or equivalently right) 
cosets of a sublattice of $X$ of maximal rank.  
\item[(e)] $w$ has isolated fixed points in $E$.
\item[(f)] $w$ has a unique fixed point in $E$.
\end{enumerate}
The set of elliptic elements is a finite union 
of conjugacy classes.

An (extended) \emph{parahoric subgroup} of $W$ is the pointwise stabilizer 
of an affine subspace in $E$.
Given $e\in E$ consider the isotropy group $W_e\subset W$.  Then $W_e$ has a 
natural faithful linear action on $T_e(T)\simeq E$, and an element $w\in W_e$ is said to be elliptic if $w$ is elliptic in $W_e$ with respect to the action on $T_e(T)$, in the sense of Reeder \cite{Ree}. 

We denote by $\mathcal{R}_\mathbb{Z}(W)$ the Grothendieck group of the category of $\mathbb{C}[W]$-modules
of finite length, and by $\mathcal{R}_\mathbb{C}(W)$ its complexification. The character map defines an embedding 
of $\mathcal{R}_\mathbb{C}(W)$ into the space of complex class functions on $W$. 
Let $\overline{\mathcal{R}}_\mathbb{C}(W)$ denote the complex valued class functions on $W$ 
supported on the set of elliptic conjugacy classes. When we compose the character map with the restriction map  
we obtain a surjective map from $\Rc_\mathbb{C}(W)$ to $\overline{\mathcal{R}}_\mathbb{C}(W)$. 
In \cite{OpdSol} it was shown that the kernel of this map is spanned by the set of 
characters which are induced from proper Levi subgroups. We will identify 
$\overline{\mathcal{R}}_\mathbb{C}(W)$ with this 
quotient of $R_\mathbb{C}(W)$. We denote by $\overline{\mathcal{R}}_\mathbb{Z}(W)\subset\mathcal{R}_\mathbb{C}(W)$
the image of $\mathcal{R}_\mathbb{Z}(W)$, and refer to this lattice as the group of elliptic virtual characters.

There exists a unique conjugation invariant measure \cite[Theorem 3.3(c)]{OpdSol} $\mu_{ell}$ on $W$, which is supported on the elliptic 
conjugacy classes, and which is defined by $\mu_{ell}((1-w)(X))=|W_0|^{-1}$ if $(1-w)(X)$ has maximal rank, 
and $\mu_{ell}((1-w)(X))=0$ otherwise. 
This defines an integral positive semidefinite Hermitian pairing, the \emph{elliptic pairing} $EP_W$ on $R_\mathbb{C}(W)$
by integrating $f\overline{g}$ over $W$ with respect to the measure $\mu_{ell}$. 
The Euler-Poincar\'e pairing on $R_\mathbb{Z}(W)$ is expressed by $EP_W$. 
More precisely \cite{OpdSol}, given 
virtual representations $U$ and $V$ of $W$, with characters $\chi_U$ and $\chi_V$ respectively, one has  
\begin{equation}
EP_W(U,V)=\int_{w\in W}\chi_U(w)\overline{\chi_V(w)}d\mu_{ell}(w)=\sum_{i=0}^\infty(-1)^i\textup{dim}\textup{Ext}_W^i(U,V)
\end{equation}
In particular $EP_W$ is integral on $R_\mathbb{Z}(W)$.
By \cite{OpdSol} the radical of $EP_W$ is exactly the kernel of the quotient map 
$\Rc_\mathbb{C}(W)\to \overline{\mathcal{R}}_\mathbb{C}(W)$, hence in particular $EP_W$ descends to a 
positive definite integral inner 
product on $\overline{\mathcal{R}}_\mathbb{Z}(W)$. 

The Weyl group $W_0$ acts naturally on the algebraic torus $T=\textup{Hom}(X,\mathbb{C}^\times)$.
Clearly $w\in W_0$ is elliptic if and only if $w$ has finitely many fixed points on $T$. It was shown 
in \cite{OpdSol} that   the set of elliptic conjugacy classes of $W$ and the set 
of $W_0$-orbits of pairs $(C,t)$ with $C\in W_t$ an elliptic conjugacy class (with respect to the 
faithful action of $W_t$ on $T_t(T)$) and $t\in T$ have the same cardinality.
Here the action of $W_0$ is defined by $w(C,t)=(wCw^{-1},wt)$. 

Elements $f\in \overline{\mathcal{R}}_\mathbb{C}(W)$ can be viewed as tracial functionals $f\in\mathbb{C}[W]^*$
supported on the set of elliptic conjugacy classes. Hence the center $Z(\mathbb{C}[W])=\mathbb{C}[X]^{W_0}\subset \mathbb{C}[W]$ 
acts on $\overline{\mathcal{R}}_\mathbb{C}(W)$ by multiplication, i.e. $z.f(a):=f(za)$ for all $z\in Z(\mathbb{C}[W])$, 
$f\in \overline{\mathcal{R}}(W)$, 
and $a\in \mathbb{C}[W]$.
Using Mackey theory we showed in \cite{OpdSol}  that there exists an isometric isomorphism 
\begin{equation}\label{eq:Mack}
\textup{Ind}:=\oplus_{s\in W_0\backslash T}\textup{Ind}_s:\oplus_{s\in W_0\backslash T}
\overline{\mathcal{R}}_\mathbb{C}(W_s)\to\overline{\mathcal{R}}_\mathbb{C}(W)
\end{equation}
Here $\overline{\mathcal{R}}_\mathbb{C}(W_s)$
is equipped with the elliptic inner product \cite{Ree} for the isotropy group $W_s$ 
with respect to its natural faithful representation on the tangent space $T_s(T)$ of $T$ at $s$, 
and the direct sum is an orthogonal direct sum. 
Furthermore $\textup{Ind}_s$ is the linear map on $\overline{\mathcal{R}}_\mathbb{C}(W_s)$ realized by 
the Mackey induction functor. The image of $\textup{Ind}_s$ equals 
\begin{equation}\label{eq:eigen}
\textup{Im}(\textup{Ind}_s)= \overline{\mathcal{R}}_\mathbb{C}(W)_{W_0s}, 
\end{equation}
the $Z(\mathbb{C}[W])$-eigenspace in $\overline{\mathcal{R}}_\mathbb{C}(W)$ 
with eigenvalue $W_0s$, hence (\ref{eq:Mack}) gives the orthogonal decomposition of 
$\overline{\mathcal{R}}_\mathbb{C}(W)$ 
as direct sum of $Z(\mathbb{C}[W])$-eigenspaces. 
By Mackey theory for $W=W_0\ltimes X$ this decomposition is compatible with the integral 
structure.
It follows that we also have an orthogonal direct sum decomposition of lattices:
\begin{equation}\label{eq:MackZbar}
\textup{Ind}:=\oplus_{s\in W_0\backslash T}\textup{Ind}_s:\oplus_{s\in W_0\backslash T}
\overline{\mathcal{R}}_\mathbb{Z}(W_s)\to\overline{\mathcal{R}}_\mathbb{Z}(W)
\end{equation}
\begin{defn}[\cite{COT}] Let $\Xc$ be a $\Zb$-lattice equipped with an integral positive 
definite bilinear form. An element $x\in \Xc$ is called \emph{pure} if $x$ is not a nontrivial 
orthogonal sum in  $\Xc$.  
\end{defn}
\subsection{Affine Hecke algebras and Dirac induction}
Unfortunately we do not know how to define a Dirac type operator 
for affine Hecke algebras. Using appropriate versions of Lusztig's reduction theorems, 
results of \cite{COT}, \cite{OpdSol}, and \cite{OpdSol2} we can nevertheless define Dirac 
type induction from a well defined subspace of the space of elliptic characters of the affine 
Weyl group to the space of virtual discrete series characters of $\Hc_\vb$.

For affine Hecke algebras we use the setup and the notations of \cite[Section 2]{OpdSol2}. Thus 
given a based root datum $\mathcal{R}$ let $\Lambda$ denote the canonically associated Laurent  
polynomial ring of Hecke parameters $v(s)$, and let $\Hc_\Lambda=\mathcal{H}_\Lambda(\mathcal{R})$ denote the 
associated affine Hecke algebra defined over $\Lambda$. Let $\Qc_c=\textup{Hom}(\Lambda,\mathbb{C})$, 
the group of complex points of an algebraic torus. Let $\Qc\subset \Qc_c$ be the real vector group of real points.
 
We denote the canonical $\Lambda$-basis of $\Hc_\Lambda$
by $N_w$ (with $w\in W$), where the normalization is such that for affine simple reflections 
$\mathbf{s}\in S$ we have:
\begin{equation}
(N_\mathbf{s}-v(\mathbf{s}))(N_\mathbf{s}+v(\mathbf{s})^{-1})=0
 \end{equation}
An element $\mathbf{v}\in \Qc$ is determined by its coordinates 
$\mathbf{v}(\mathbf{s}):=v(\mathbf{s})(\mathbf{v})$ with $\mathbf{s}\in S$. We denote by 
$\mathcal{H}_{\mathbf{v}}=\mathcal{H}_\Lambda(\mathcal{R})\otimes\mathbb{C}_{\bf{v}}$ 
the corresponding specialized affine Hecke algebra, specialized at $\mathbf{v}$. 

According to the Bernstein-Lusztig-Zelevinski presentation of $\Hc_\Lambda$ we have a unique abelian subalgebra 
$\mathcal{A}=\Cb[\theta_x\mid x\in X]\subset \Hc$ such that $\theta_x=N_x$
if $x\in X$ is dominant. Then $\mathcal{A}\simeq \Cb[X]$, and the center $Z(\Hc_\Lambda)$ is equal to 
$A^{W_0}\simeq\Lambda[X]^{W_0}$. 

Given $\vb\in \Qc$, consider the quotient $\overline{\mc{R}}_\Cb(\Hc_\vb)$ of the complexified Grothendieck 
ring $\mc{R}_\Cb(\Hc_\vb)$ of finite length representations of $\Hc_\vb$ by the subspace generated by the 
properly parabolically induced representations. By \cite{CiuHe}  
this is a finite dimensional complex vector space for all $\vb\in \Qc$. 
Notice also that $\overline{\mc{R}}_\Cb(\Hc_{\vb=1})=\overline{\mc{R}}_\Cb(W)$. The image of the lattice of virtual 
characters is denoted by $\overline{\mc{R}}_\Zb(\Hc_\vb)$. The center $Z(\Hc_\vb)$ acts on 
$\overline{\mc{R}}_\Cb(\Hc_\vb)$, and by 
Schur's lemma we have a decomposition 
\begin{equation}\label{eq:zblocks}
\overline{\mc{R}}_\Zb(\Hc_\vb)=\oplus_{t\in W_0\backslash T}\overline{\mc{R}}_\Zb(\Hc_\vb)_{W_0t}
\end{equation}
If $\vb\in \Qc$ is a positive parameter then 
\cite{OpdSol} asserts that $\Hc_\vb$ has finite global dimension, and we define an integral bilinear 
form $EP_\Hc$ on $\overline{\mc{R}}_\Zb(\Hc_\vb)$ by 
\begin{equation}
EP_\Hc(U,V)=\sum_{i=0}^\infty(-1)^i\textup{dim}\textup{Ext}_{\Hc_\vb}^i(U,V)
\end{equation}
As mentioned above, there exists \cite{OpdSol} a 
``scaling map'' $\lim_{\vb\to 1}: \overline{\mc{R}}_\Cb(\Hc_\vb)\to \overline{\mc{R}}_\Cb(W)$
which is an isometry. In particular $EP_\Hc$ is itself symmetric and positive semidefinite. One way to understand 
$\lim_{\vb\lim_\to 1}$ is via Lusztig's reduction results to graded affine Hecke algebras, combined with Clifford theory, and 
the restriction map from graded affine Hecke algebra representations to representations of the corresponding Weyl group.
This is what we will look into in the next paragraph.

\subsubsection{Clifford theory for extensions of graded affine Hecke algebras}\label{sec:cliff}
Consider $\vb\in \Qc$, and let $V$ be an irreducible representation of $\Hc_\vb$. Recall the polar decomposition $T=T_uT_v$, where $T_u=\textup{Hom}(X,S^1)$ and $T_v=\textup{Hom}(X,\mathbb R_{>0}).$ Let the central character of $V$ be $W_0t$ with  
$t=sc$ where $s\in T_u$ is a unitary element, and $c\in T_v$. Let $F_{s,1}$, $R_{s,1}$ and $\Gamma_s$ be as in 
\cite[Definition 2.5]{OpdSol2}, so that the isotropy group $W_s$ of $s$ in $W_0$ equals $W_s=W(R_{s,1})\rtimes \Gamma_s$, 
and $\alpha(t)>0$ for all $\alpha\in R_{s,1}$. We recall that $\Gamma_s$ is a finite abelian group, acting on 
$F_{s,1}$ by diagram automorphisms  
preserving $k_{s,1}$. Lusztig's reduction theorems \cite{Lus2} in the version discussed in 
\cite[Theorem 2.6, Theorem 2.8, Corollary 2.10]{OpdSol2} imply that the category of 
finite dimensional representations of $\Hc_\vb$ with central character $W_0t$ is equivalent to the category 
of finite dimensional representations of $\Hb(R_{s,1},T_s(T),F_{s,1};k_s)\rtimes\Gamma(t)$ with real central character 
$W(R_{s,1})\xi$. Here $\Hb(R_{s,1},T_s(T),F_{s,1};k_s)$ is the graded affine Hecke algebra as defined in \cite[Section 2]{OpdSol2}, 
$\xi \in T_s(T)$ is the unique vector in the real span of $R_{s,1}^\vee$ such that $\alpha(t) = e^{\alpha(\xi)}$ for all $\alpha\in R_{s,1}$ 
and $\Gamma(t)\subset \Gamma_s$ is the isotropy group of the central character $W(R_{s,1})\xi$ of $\Hb(R_{s,1},T_s(T),F_{s,1};k_s)$.

Clifford theory \cite{RamRam} for the crossed product $\Hb(R_{s,1},T_s(T),F_{s,1};k_s)\rtimes\Gamma_s$ says that the irreducible characters 
of this algebra are obtained as follows. Let $U$ be an irreducible representation of $\Hb(R_{s,1},T_s(T),F_{s,1};k_s)$. Let 
$\Gamma_U\subset \Gamma_s$ be the isotropy subgroup for the equivalence class $[U]$ of irreducible representations 
of $\Hb(R_{s,1},T_s(T),F_{s,1};k_s)$. Then twisting $U$ by elements of $\Gamma_U$ equips $U$ with a representation of 
a twisted group algebra $\mathbb{C}[\Gamma_U;\eta_U]$ with respect to a $2$-cocycle $\eta_U$ of $\Gamma_U$ with values 
in $\mathbb{C}^\times$. Consider  a simple module $M$ of $\mathbb{C}[\Gamma_U;\eta_U^{-1}]$, then 
$N_\Hb(U,M):=\textup{Ind}_{\Hb(R_{s,1},T_s(T),F_{s,1};k_s)\rtimes\Gamma_U}^{\Hb(R_{s,1},T_s(T),F_{s,1};k_s)\rtimes\Gamma_s}(U\otimes M)$
is an irreducible $\Hb(R_{s,1},T_s(T),F_{s,1};k_s)\rtimes\Gamma_s$-module, and all its irreducible modules are equivalent to such 
a module. Moreover, $N_\Hb(U,M)\simeq N_\Hb(U',M')$ if and only if $U'\simeq U\circ\gamma^{-1}$ for some  $\gamma\in \Gamma_s$, 
and $M'\simeq M\circ\gamma^{-1}$. 

Observe that when $U$ has central character $W(R_{s,1})\xi$ then $\Gamma_U\subset \Gamma(t)$. Thus Clifford theory 
implies that the set of 
irreducible modules of $\Hb(R_{s,1},T_s(T),F_{s,1};k_s)\rtimes\Gamma_s$ with central character $W_s\xi$ is in natural 
bijection with the set of irreducible modules of $\Hb(R_{s,1},T_s(T),F_{s,1};k_s)\rtimes\Gamma(t)$ with central character 
$W(R_{s,1})\xi$. In fact it follows from the proof of Lusztig's reduction theorem that this bijection between the respective 
sets of irreducibles arises from a Morita equivalence of the two algebras, formally completed at the 
appropriate central characters $W_s\xi$ and $W(R_{s,1})\xi$ respectively.
Therefore, by the above, the category of finite dimensional representations of $\Hc_\vb$ with central character $W_0t$ 
is naturally equivalent with the category 
of finite dimensional representations of $\Hb(R_{s,1},T_s(T),F_{s,1};k_s)\rtimes\Gamma_s$ with real central character 
$W_s\xi$. In particular we have a natural isomorphism
\begin{equation}\label{eq:red}
\overline{\mc{R}}_\Zb(\Hc_\vb)_{W_0t}\simeq \overline{\mc{R}}_\Zb(\Hb(R_{s,1},T_s(T),F_{s,1};k_s)\rtimes\Gamma_s)_{W_s\xi}
\end{equation}
It is an interesting question what the central support of $\overline{\mc{R}}_\Zb(\Hc_\vb)$ is. Clearly, if 
$\overline{\mc{R}}_\Zb(\Hc_\vb)_{W_0t}\not=0$ then, by the above, one has  
$T_s(E)^{W_s}=0$. 

Since $\Cb[W_s]=\Cb[W(R_{s,1})]\rtimes \Gamma_s$, we have a similar description of the set of 
irreducibles of $\Cb[W_s]$ as modules of the form $N_{W_s}(X,M)$ where $X$ is an irreducible for $W(R_{s,1})$.

The restriction functor 
\[ \textup{Res}_{W_s}: \Hb(R_{s,1},T_s(T),F_{s,1};k_s)\rtimes\Gamma_s\text{-modules}\to \Cb[W_s]=\Cb[W(R_{s,1})]\rtimes \Gamma_s\text{-modules},
\]
 induces a homomorphism on the level of the Grothendieck 
groups of representations of finite length. Via the above correspondences, the ``scaling map" $\lim_{\vb\to 1}$ (more precisely, 
$\lim_{\ep\to 0}\pi_{\vb^\ep}$)  
corresponds to taking the limit $\ep\to 0$ of the family of twists by linear scaling isomorphisms 
$\psi_\ep:\Hb(R_{s,1},T_s(T),F_{s,1};k_s)\rtimes\Gamma_s\to \Hb(R_{s,1},T_s(T),F_{s,1};\ep k_s)\rtimes\Gamma_s$ defined by 
$\phi_\ep(\xi)=\ep^{-1} \xi$. This is   
the restriction map $\textup{Res}_{W_s}$. In particular we see: 

\begin{cor}\label{cor:lim}
The map $[\pi]\to\lim_{\ep\to 0}[\pi_{\vb^\ep}]$ respects the lattices of virtual characters, and defines
an isometric map $\overline{\mc{R}}_\Zb(\Hc_\vb)\to \overline{\mc{R}}_\Zb(W)$ sending 
$\overline{\mc{R}}_\Zb(\Hc_\vb)_{W_0t}$ to $\overline{\mc{R}}_\Zb(W)_{W_0s}$ (where $t=sc\in T_uT_v$ as before).
More precisely, if $\pi$ corresponds to the module $U$ of $\Hb(R_{s,1},T_s(T),F_{s,1};k_s)\rtimes\Gamma_s$ via 
(\ref{eq:red}), and $b_s=[U|_{W_s}]$, then $\lim_{\ep\to 0}[\pi_{\vb^\ep}]=\textup{Ind}_s(b_s)$.
\end{cor}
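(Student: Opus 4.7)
The plan is to decompose both sides according to the central-character decomposition~\eqref{eq:zblocks} and its $W$-analogue~\eqref{eq:MackZbar}, and then to check block-by-block that the scaling map factors as the composite of three already-established operations: Lusztig's reduction, restriction to $W_s$, and Mackey induction.

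Fix a block $\overline{\mc{R}}_\Zb(\Hc_\vb)_{W_0t}$ with $t=sc$. Step~(1): apply~\eqref{eq:red} to identify this block with $\overline{\mc{R}}_\Zb(\Hb(R_{s,1},T_s(T),F_{s,1};k_s)\rtimes\Gamma_s)_{W_s\xi}$, carrying $[\pi]$ to $[U]$. Step~(2): invoke the identification made in the paragraph preceding the corollary: under the above equivalence, the family $\pi_{\vb^\ep}$ corresponds to the family of twists of $U$ by $\psi_\ep$, and its $\ep\to 0$ limit is the underlying $\Cb[W_s]$-module $[U|_{W_s}]=b_s$ (since $\psi_\ep$ contracts the generators $\xi\in T_s(T)$ into the ``polynomial'' part of the graded algebra, which acts trivially at the limit point $\ep k_s=0$). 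Step~(3): apply the Mackey summand $\textup{Ind}_s$ from~\eqref{eq:MackZbar}, landing in $\overline{\mc{R}}_\Zb(W)_{W_0s}$; the target block is indexed by $s$ rather than $t$ because the hyperbolic factor $c$ contracts to the identity as $\vb^\ep\to 1$, so the central character on $\Cb[X]^{W_0}$ of the limit $W$-character is the evaluation at $s$.

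With the factorization $\lim_{\ep\to 0}=\textup{Ind}_s\circ\textup{Res}_{W_s}\circ(\text{Lusztig reduction})$ in place, the remaining assertions are formal: each of the three factors is integral, so the composite preserves the lattices of virtual characters; each is isometric for the respective elliptic pairings (Lusztig's reduction via the categorical equivalence from~\cite{OpdSol2}, restriction by the elliptic theory of graded affine Hecke algebras in~\cite{COT}, and Mackey induction by its construction in~\cite{OpdSol}), so the composite is an isometry on each block, and the block decompositions on both sides are orthogonal.

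The main obstacle I expect is the rigorous verification of Step~(2): namely, that Lusztig's reduction~\eqref{eq:red}, set up at a fixed central character, is genuinely compatible with the \emph{family} $\{j_\ep^{-1}\}_{\ep>0}$ and its $\ep\to 0$ limit, in such a way that the scaling family on the Hecke side corresponds precisely to the rescaling $\psi_\ep$ on the graded side. This is asserted in the excerpt, and in principle follows by tracking the Morita equivalence underlying Lusztig's reduction through the analytic localizations $\Hc^{an}_\vb(U)$ at the unitary point $s$ and comparing with the formal completions of the graded algebra at $\xi\in T_s(T)$; but carrying out this comparison uniformly in $\ep$ is the only technical input beyond the category-theoretic and Mackey-theoretic bookkeeping sketched above.
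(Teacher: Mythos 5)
Your proposal is correct and follows essentially the same route as the paper: the corollary is presented there as an immediate consequence of the paragraph preceding it, which identifies the scaling limit with $\textup{Res}_{W_s}$ via Lusztig reduction~\eqref{eq:red} and the $\psi_\ep$-twists, and then composes with $\textup{Ind}_s$ from~\eqref{eq:MackZbar}; your three-step factorization and block-by-block bookkeeping reproduce exactly that argument. You are also right that the only nontrivial technical input is the compatibility of Lusztig's reduction with the $\ep$-family, which the paper likewise asserts by appeal to the localized isomorphisms rather than spelling out.
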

\begin{defn}\label{prop:Ygm}
We define 
\begin{equation}\label{eq:Y}
\Yc_\vb:=\lim_{\vb\to 1}(\overline{\mc{R}}_\Zb(\Hc_{\vb}))\subset \overline{\mc{R}}_\Zb(W)
\end{equation}
and let $\Yc_{\vb-m}\subset\Yc_\vb$ be the sublattice generated by the limits $\lim_{\epsilon\to 0}\pi_{\vb^\ep}$ of 
\emph{discrete series representations} of $\Hc_\vb$. 
We call $\Yc_{\vb-m}$ the lattice of $\vb$-massive elliptic representations of $W$. 
Finally let $\Yc_{gm}\subset \overline{\mc{R}}_\Zb(W)$ be the sublattice generated by $\cup_{\vb\in\Qc}\Yc_{\vb-m}$, the lattice 
of \emph{generically massive} elliptic representations of $W$.  In general, $\Yc_{gm}\neq \overline{\mc{R}}_\Zb(W)$.
\end{defn}

\begin{prop}\label{prop:Bgm}
The lattice $\Yc_{gm}$ admits an orthonormal basis $\Bc_{gm}\subset \Yc_{gm}$. 
If $b\in\Bc_{gm}$ then $b\in\overline{\mc{R}}_\Zb(W)_{W_0s}$ for some $s\in T_u$ such that 
$\textup{rk}(R_{s,1})=\textup{dim}(T_u)$. 
\end{prop}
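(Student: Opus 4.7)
The plan is to exploit the orthogonal decomposition of $\overline{\mc{R}}_\Zb(W)$ by $Z(\Cb[W])$-eigenspaces, handle each summand separately via the reduction (\ref{eq:red}) to graded affine Hecke algebras, and then appeal to a simple linear-algebraic observation: any two norm-$1$ elements of an integral lattice whose inner product is an integer must be equal, opposite, or orthogonal. First, by Corollary \ref{cor:lim} the scaling map $\lim_{\vb\to 1}$ sends $\overline{\mc{R}}_\Zb(\Hc_\vb)_{W_0t}$ into $\overline{\mc{R}}_\Zb(W)_{W_0s}$ whenever $t=sc\in T_uT_v$ is the polar decomposition. Together with (\ref{eq:MackZbar}) this gives an orthogonal splitting
\begin{equation*}
\Yc_{gm}\;=\;\bigoplus_{W_0s\subset W_0\backslash T_u}\Yc_{gm,s},\qquad \Yc_{gm,s}:=\Yc_{gm}\cap\overline{\mc{R}}_\Zb(W)_{W_0s},
\end{equation*}
and it suffices to produce an orthonormal basis of each summand.

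If $\Yc_{gm,s}\ne 0$, then some irreducible discrete series of some $\Hc_\vb$ has central character with unitary part $s$; by (\ref{eq:red}) it corresponds to a discrete series of $\Hb(R_{s,1},T_s(T),F_{s,1};k_s)\rtimes\Gamma_s$ with real central character $W_s\xi$, $\xi\in T_s(E)$. The existence of such a residual $\xi$ forces the $W_s$-action on $T_s(E)$ to have no nonzero fixed vectors, whence $\textup{rk}(R_{s,1})=\dim T_s(E)=\dim T_u$; this establishes the second statement of the proposition.

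It remains to exhibit an orthonormal basis of each nonzero $\Yc_{gm,s}$. For any fixed $\vb$, distinct irreducible discrete series of $\Hc_\vb$ are mutually $EP_\Hc$-orthogonal and of $EP_\Hc$-norm one by \cite{OpdSol}; since $\lim_{\vb\to 1}$ is an isometry and central character blocks are orthogonal, the scaling limits of those discrete series with unitary central character part $s$ form an orthonormal $\Zb$-basis $B_\vb$ of $\Yc_{\vb-m,s}$, and $\Yc_{gm,s}$ is generated by $B:=\bigcup_\vb B_\vb$. Now pick any $v,u\in B$: both lie in the integral lattice $\overline{\mc{R}}_\Zb(W)_{W_0s}$ and have EP-norm $1$, so $EP_W(v,u)$ is an integer with absolute value at most $1$ by Cauchy-Schwarz; the cases $EP_W(v,u)=\pm 1$ force $u=\pm v$ (since $\|v\mp u\|^2=0$). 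Therefore $B$ consists of finitely many $\pm$-pairs of mutually orthogonal unit vectors; selecting one representative from each pair yields an orthonormal basis $\Bc_{gm,s}$ of $\Yc_{gm,s}$, and $\Bc_{gm}:=\bigsqcup_s\Bc_{gm,s}$ is the desired orthonormal basis of $\Yc_{gm}$.

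The main obstacle in this plan is the appeal to \cite{OpdSol} for the EP-orthonormality of discrete series of $\Hc_\vb$, namely $EP_\Hc(\pi,\pi)=1$ for each irreducible discrete series $\pi$. Without this fact the elements of $B$ would not a priori have unit norm, and the clean Cauchy-Schwarz step above would not directly deliver the basis; alternative routes through the Dirac-index normalizations of \cite{COT} at each fixed parameter, combined with the continuity of discrete series families in \cite{OpdSol2}, would then be required.
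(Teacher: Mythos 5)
Your proof of the first assertion is correct and in fact takes a genuinely different, more elementary route than the paper. The paper's argument invokes the analytic Dirac induction theory of \cite{COT} together with the residual point theory of \cite{OpdSol2} to show that for any $\vb$ in the (dense, open) intersection $\cap_i\Qc^{reg}_{b_i}$ over a generating set, \emph{all} the generators $b_i$ are simultaneously realized as scaling limits of discrete series of the single algebra $\Hc_\vb$; hence $\Yc_{gm}=\Yc_{\vb-m}$, and the orthonormal basis is immediate because the irreducible discrete series of one fixed $\Hc_\vb$ are $EP_\Hc$-orthonormal. You instead work directly with the union $B=\bigcup_\vb B_\vb$ and show, via the Cauchy--Schwarz trick for unit vectors in an integral lattice, that any two elements of $B$ are either equal, opposite, or orthogonal. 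This is slicker and avoids the heavier machinery, but note that it establishes strictly less: the paper's argument also proves the identity $\Yc_{gm}=\Yc_{\vb-m}$ for generic $\vb$, which is used downstream (it underlies the well-definedness of $\textup{Ind}_D(b;\vb)$ in subsection~\ref{subsec:dirac} and hence Theorem~\ref{thm:uniform}(f)). Your approach defers that burden elsewhere rather than eliminating it.

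Your argument for the rank condition has a genuine gap. You write that the existence of a residual $\xi$ forces $T_s(E)^{W_s}=0$, \emph{whence} $\textup{rk}(R_{s,1})=\dim T_s(E)$. The implication after ``whence'' does not hold: since $W_s=W(R_{s,1})\rtimes\Gamma_s$, the fixed space $T_s(E)^{W_s}$ can vanish even when $R_{s,1}$ does not span, provided $\Gamma_s$ acts without fixed vectors on the orthogonal complement of $\textup{span}(R_{s,1}^\vee)$. The absence of $W_s$-fixed vectors is only the weaker observation recorded in the paper just before Definition~\ref{prop:Ygm}. The correct justification, and the one the paper uses, is that $\xi$ is a \emph{linear residual point} for the unextended graded affine Hecke algebra $\Hb(R_{s,1},T_s(T),F_{s,1};k_s)$ (not merely its extension by $\Gamma_s$), and linear residual points in the sense of \cite{Opd1} exist only when the root system has full rank in the ambient space. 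That fact should be cited directly rather than routed through the fixed-vector statement.

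Your closing caveat about the dependence on $EP_\Hc$-orthonormality of discrete series from \cite{OpdSol} is accurate, but it is not a weakness specific to your plan: the paper's own proof rests on exactly the same input.
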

\begin{proof}
It follows from the classification \cite[Section 5]{OpdSol2} that an irreducible 
discrete series $(V,\pi_\vb)$ of $\Hc_\vb$ has central character $W_0r(\vb)$ for some generic residual 
point $r$, and we can write $r=sc$ with $s\in T_u$ and $c=\exp(\xi)$ where $\xi$ is a generic linear residual 
point for $\Hb(R_{s,1},T_s(T),F_{s,1};k_s)$ whose evaluation at $k_s$ is residual. In particular, the rank of 
$R_{s,1}$ is equal to the dimension of $T_u$. 

By \cite{OpdSol2} it also follows that $\pi$ corresponds 
via (\ref{eq:red}) to the induced representation of an irreducible representation 
$U\otimes M$ of 
$\Hb(R_{s,1},T_s(T),F_{s,1};k_s)\rtimes\Gamma_U$ to $\Hb(R_{s,1},T_s(T),F_{s,1},k_s)\rtimes\Gamma_s$, 
where $U$ is an irreducible discrete series character of $\Hb(R_{s,1},T_s(T),F_{s,1},k_s)$ (here we also use the discussion in the text above). 
In Corollary \ref{cor:lim} we have seen that the limit 
$b:=\lim_{\epsilon\to 0}\overline{\pi_{\vb^\epsilon}}$ equals $b=\textup{Ind}_s(b_s)$ with $b_s=\overline{U|_{W_s}}$. 
By the results of 
\cite{COT} and of \cite{OpdSol2} we see that the algebraic Dirac induction for $\Hb(R_{s,1},T_s(T),F_{s,1};k_s)$  of $b_s$ 
yields an elliptic representation 
supported by the central character $W(R_{s,1})\xi(k_s)$, which is (by \cite{OpdSol2}) residual for all $\vb\in\Qc^{reg}_b$, a 
complement in  $\Qc$ of finitely many hyperplanes. Recall that, by \cite{Opd3}, a central character $W(R_{s,1})\xi(k_s)$ for $\Hb(R_{s,1},T_s(T),F_{s,1};k_s)$ is residual if and only if $W_s\xi(k_s)$ is residual for  $\Hb(R_{s,1},T_s(T),F_{s,1};k_s)\rtimes\Gamma_s$. (This is because of the invariance of residual central character under Dynkin diagram automorphims.)

Let $\vb'\in\Qc^{reg}_b$. 
By the main result of \cite{COT}, since $W_s\xi(k'_s)$ is residual there 
exists a virtual discrete series character $U'$ of $\Hb(R_{s,1},T_s(T),F_{s,1};k'_s)$ with $\overline{U'|_{W_s}}=b_s$
($U'$ is the analytic Dirac induction of $b_s$). 
Again using the classification 
of the discrete series of \cite{OpdSol2}, and (\ref{eq:red}), there exists a virtual discrete series character $\pi'_{\vb'}$ 
of $\Hc_{\vb'}$ with $b=\textup{Ind}_s(b_s)=\lim_{\epsilon\to 0}\overline{\pi_{{\vb'}^\epsilon}}$. 
Since $\Yc_{gm}\subset \overline{\mc{R}}_\Zb(W)$ it is clear that $\Yc_{gm}$ is finitely generated. Choose a 
finite collection of (irreducible) discrete series $\pi_i$ of $\Hc_{\vb_i}$ such that the corresponding 
limits $b_i$ are linearly independent and generate $\Yc_{gm}$.
By the arguments above, if $\vb\in \cap_i\Qc^{reg}_{b_i}$ then there exist virtual discrete series characters 
$\pi'_i$ of $\Hc_{\vb}$ with $b_i:=\lim_{\epsilon\to 0}\overline{\pi'_{i,\vb^\epsilon}}$. Consequently, $\Yc_{gm}=\Yc_{\vb-m}$.
Since the limit map is an isometry and since  (by \cite{OpdSol}) the irreducible discrete series form an orthonormal set with 
respect to 
$EP_\Hc$, $\Yc_{\vb-m}$ (and thus $\Yc_{gm}$) admits an orthonormal basis.
\end{proof}

At this point, the basis $\Bc_{gm}$ from Proposition \ref{prop:Bgm} is not unique. The canonical choice for the basis $\Bc_{gm}$ is obtained in Corollay \ref{cor:unique}. 

\subsubsection{Dirac induction for affine Hecke algebras}\label{subsec:dirac}
\begin{defn}\label{defn:indD}
Given $b\in\Bc_{gm}$ and $\vb\in \Qc_b^{reg}$, we define $\textup{Ind}_D(b;\vb)$ (the "Dirac induction of $b$")  
as the unique virtual discrete series character of $\Hc_\vb$ whose scaling limit satisfies 
$\lim_{\ep\to 0}\textup{Ind}_D(b;\vb^\ep)=b$. Up to a sign $\ep(b;\vb)\in\{\pm 1\}$,  $\textup{Ind}_D(b;\vb)$
is an irreducible discrete series character. For all $\vb\in\Qc$ this defines a bijection 
$\Bc_{\vb-m}\ni b\to \ep(b;\vb)\textup{Ind}(b;\vb)\in\Delta_\vb$, where $\Delta_\vb:=\Delta(\Hc_\vb)$ denotes 
the set of isomorphism classes of 
irreducible discrete series representations of $\Hc_\vb$.  
\end{defn}
In the proof of Proposition \ref{prop:Bgm}
we have seen that $\textup{Ind}_D(b;\vb)$ is not directly constructed as the index of a Dirac type operator but rather, 
$\textup{Ind}_D(b;\vb)$ corresponds via (\ref{eq:red}) and a Morita equivalence to the discrete series $U\otimes M$ of 
$\Hb(R_{s,1},T_s(T),F_{s,1},k_s)\rtimes\Gamma_U$, with $U=\textup{Ind}_D^{an}(b_s,k_s)$ the analytic Dirac induction 
(defined in \cite{COT}) 
of $b_s$ for  $\Hb(R_{s,1},T_s(T),F_{s,1},k_s)$. The existence of $\textup{Ind}_D(b;\vb)$ follows from this construction. 
It also follows 
from this construction that $\pm\textup{Ind}_D(b;\vb)$ is irreducible. 
\subsubsection{The generic Vogan central character map}
Let $\Bc_{gm}$ be an orthonormal basis of $\Yc_{gm}$. As we have seen, given 
$b\in\Bc_{gm}$ the set $\Qc_b^{reg}= \{\vb\mid b\in\Bc_{\vb-m}\}$ is the complement of finitely many 
hyperplanes in $\Qc$. As in the proof of Proposition \ref{prop:Bgm}, to each $b\in\Bc_{gm}$ we have 
a canonically associated orbit of generic 
residual points $W_0r_b$, with $r_b=s\exp{\xi_s}$, and $\xi_s$ the generic linear residual point associated to 
$b_s\in \overline{R_\Zb(W_s)}$ by the generic version of ``Vogan's conjecture" (see \cite[Theorem 3.2]{COT}). Strictly speaking, the results of \cite{COT} apply to give a residual central character of a nonextended graded affine Hecke algebra, but as already noted in the proof of Proposition \ref{prop:Bgm}, one may use the invariance \cite{Opd3} of these central characters under diagram automorphisms to obtain the desired central character in our more general setting.

Let us denote the resulting generic central character map 
(cf. \cite{OpdSol2}) by: 
\begin{align*}
gcc_B:\Bc_{gm}&\to W_0\backslash\textup{Hom}(X,\Lambda^\times)=W_0\backslash T(\Lambda)\\
b&\to W_0(s\exp{\xi_s})
\end{align*}
(the generic Vogan central character map).
From the proof of Proposition \ref{prop:Bgm} and Definition \ref{defn:indD} it is clear that:
\begin{cor}\label{cor:gVcc}
For all $b\in\Bc_{gm}$ and $\vb\in\Qc_b^{reg}$ we have 
$$gcc_\vb(\textup{Ind}_D(b;\vb))=gcc_B(b)=W_0r_b$$ 
Moreover, $W_0r_b(1)=W_0s$ if and only if $b$ can be written as  $b=\textup{Ind}_s(b_s)$.
\end{cor}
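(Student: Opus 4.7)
The plan is to unwind the construction of $\textup{Ind}_D(b;\vb)$ given in Definition \ref{defn:indD} and compare it step by step with the definition of $gcc_B$. First I would recall from the proof of Proposition \ref{prop:Bgm} that any $b\in\Bc_{gm}$ comes with a canonically associated triple $(s,b_s,\xi_s)$: an element $s\in T_u$ with $\textup{rk}(R_{s,1})=\textup{dim}(T_u)$, an elliptic virtual character $b_s\in \overline{\mc{R}}_\Zb(W_s)$ with $b=\textup{Ind}_s(b_s)$, and a generic linear residual point $\xi_s$ for $\Hb(R_{s,1},T_s(T),F_{s,1};k_s)$ supplied by the generic Vogan conjecture \cite[Theorem 3.2]{COT}. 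By the definition of $gcc_B$, this yields precisely $gcc_B(b)=W_0(s\exp\xi_s)=W_0r_b$, so the second equality of the first assertion is tautological.

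For the first equality I would trace through the construction of $\textup{Ind}_D(b;\vb)$. By Definition \ref{defn:indD} and the Morita equivalence (\ref{eq:red}) (plus the diagram-automorphism invariance of residual central characters from \cite{Opd3} used to move from $\Gamma_U$ to $\Gamma_s$), the character $\textup{Ind}_D(b;\vb)$ corresponds to the induced module built from $U=\textup{Ind}_D^{an}(b_s,k_s)$, and by the analytic Vogan conjecture of \cite{COT} the central character of $U$ is $W(R_{s,1})\xi_s(k_s)$. Translating back via (\ref{eq:red}), the $\Hc_\vb$-central character of $\textup{Ind}_D(b;\vb)$ is $W_0 t$ with $t=s\exp(\xi_s(k_s))$, which is exactly the specialization at $\vb$ of $W_0r_b$. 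This gives $gcc_\vb(\textup{Ind}_D(b;\vb))=W_0r_b$.

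For the second assertion, the ($\Leftarrow$) direction is almost immediate: if $b=\textup{Ind}_s(b_s)$, then $r_b=s\exp\xi_s$ with $\xi_s$ a \emph{linear} generic residual point, so $\xi_s$ vanishes at the parameter value corresponding to $\vb=1$ (i.e.\ $k_s=0$), and hence $r_b(1)=s$. For ($\Rightarrow$), I would invoke the Mackey decomposition (\ref{eq:MackZbar}) to obtain a unique $s'\in W_0\backslash T$ with $b\in \overline{\mc{R}}_\Zb(W)_{W_0s'}$ and a unique $b_{s'}$ with $b=\textup{Ind}_{s'}(b_{s'})$; by the first part of the corollary combined with the eigenspace description (\ref{eq:eigen}), the assumption $W_0r_b(1)=W_0s$ forces $W_0s'=W_0s$, and conjugating by a suitable element of $W_0$ identifies the decomposition with $b=\textup{Ind}_s(b_s)$.

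I do not expect any real obstacle beyond bookkeeping: the only subtle point is making sure the chain of Morita/Clifford equivalences preserves central characters in the precise form $t=s\exp(\xi_s(k_s))$ (as opposed to merely up to the $W_0$-orbit), and that the evaluation $\xi_s(k_s)|_{\vb=1}=0$ is used correctly. Both of these are by now standard in the setup of \cite{OpdSol2,COT}, so the argument is essentially an assembly of already-proven statements into the form claimed.
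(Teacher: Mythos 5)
Your proof is correct and matches the paper's own argument, which is simply the observation that the corollary is immediate from the proof of Proposition \ref{prop:Bgm} together with Definition \ref{defn:indD}; you have merely spelled out the bookkeeping (the role of the Morita/Clifford chain and the vanishing of the linear residual point $\xi_s$ at $k_s=0$) in more detail. The one small imprecision is that in the ($\Rightarrow$) direction it is the ($\Leftarrow$) direction applied to the unique $s'$ coming from the Mackey eigenspace decomposition, not "the first part of the corollary," that forces $W_0 s'=W_0 s$ -- but this does not affect the validity of the argument.
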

We put $m_b:=m_{W_0r_b}$, where $m_{W_0r_b}$ denotes the mass function associated to the 
orbit of generic residual points $gcc_B(b)=W_0r_b$ in \cite{OpdSol2}. 
By \cite{OpdSol2}, this is an explicit rational function on $\Qc$ 
which is regular on $\Qc$, and we have for all $b\in\Bc$ that $\Qc_b^{reg}=\{\vb\in\Qc\mid m_b(\vb)\not=0\}$. 
\begin{thm}\label{thm:Vogan}
The generic Vogan central character map $gcc_B$ is a surjection 
$gcc_B:\Bc_{gm}\to W_0\backslash\textup{Res}(\Rc)$. 
\end{thm}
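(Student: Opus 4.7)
My plan is to invert the construction: given $W_0r\in W_0\backslash\textup{Res}(\Rc)$, I will produce an element $b\in\Bc_{gm}$ with $gcc_B(b)=W_0r$. The key input is the classification of discrete series from \cite{OpdSol2}, which in particular implies that for every generic residual orbit $W_0r$ and every $\vb$ in the Zariski-open complement of the hyperplanes $\{m_{W_0r}(\vb)=0\}$ (where $r(\vb)$ is residual for $\Hc_\vb$), there exists an irreducible discrete series representation $\pi_\vb$ of $\Hc_\vb$ with central character $W_0r(\vb)$. I would fix one such $\vb_0$ and a corresponding $\pi_{\vb_0}$.

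Next, I would form the scaling limit $b':=\lim_{\ep\to 0}\overline{(\pi_{\vb_0})_{\vb_0^\ep}}\in\Yc_{\vb_0-m}\subset\Yc_{gm}$. By Corollary \ref{cor:lim} this map is isometric and sends $\overline{\mc{R}}_\Zb(\Hc_{\vb_0})$ into $\overline{\mc{R}}_\Zb(W)$, while \cite{OpdSol} guarantees that the irreducible discrete series form an orthonormal set in $\overline{\mc{R}}_\Zb(\Hc_{\vb_0})$ with respect to $EP_\Hc$. Hence $b'$ is a norm-one integral vector in $\Yc_{gm}$. Expanding $b'=\sum n_ib_i$ in the orthonormal basis $\Bc_{gm}$ with $n_i\in\Zb$ forces $\sum n_i^2=1$, so $b'=\pm b$ for a single $b\in\Bc_{gm}$. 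Now Definition \ref{defn:indD} characterises $\textup{Ind}_D(b;\vb_0)$ as the unique virtual discrete series of $\Hc_{\vb_0}$ with scaling limit $b$, so $\pi_{\vb_0}=\pm\textup{Ind}_D(b;\vb_0)$, and Corollary \ref{cor:gVcc} yields
\[
W_0r_b(\vb_0)=gcc_{\vb_0}\bigl(\textup{Ind}_D(b;\vb_0)\bigr)=gcc_{\vb_0}(\pi_{\vb_0})=W_0r(\vb_0).
\]

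To upgrade this pointwise equality to equality of generic residual orbits $W_0r_b=W_0r$ in $W_0\backslash T(\Lambda)$, I would exploit the Lusztig--Clifford reduction (\ref{eq:red}). Writing $r=s\exp(\xi)$ with $s\in T_u$, the representation $\pi_{\vb_0}$ corresponds via (\ref{eq:red}) and Clifford theory to a module built from an irreducible discrete series $U$ of $\Hb(R_{s,1},T_s(T),F_{s,1};k_{s,\vb_0})$ with central character $W(R_{s,1})\xi(\vb_0)$, and the associated $b_s:=\overline{U|_{W_s}}$ satisfies $b=\textup{Ind}_s(b_s)$. By construction $r_b=s\exp(\xi_s)$ where $\xi_s$ is the generic linear residual point assigned to $b_s$ by the generic Vogan conjecture \cite[Theorem 3.2]{COT}; applied to $U$, this conjecture identifies its central character with $W(R_{s,1})\xi_s(\vb_0)$, forcing $\xi=\xi_s$ up to $W(R_{s,1})$-action, and hence $W_0r=W_0r_b$. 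The main obstacle I anticipate is verifying that the Clifford-theoretic and diagram-automorphism passages between $\Hb(R_{s,1},T_s(T),F_{s,1};k_s)$ and its crossed product with $\Gamma_s$ preserve the Vogan identification of generic central characters: the invariance result of \cite{Opd3} quoted around Proposition \ref{prop:Bgm} is precisely what is needed here, and tracking it carefully — together with the fact that a given generic residual orbit may support several discrete series, so that multiple basis elements can share the same value of $gcc_B$ — is the subtlest bookkeeping in the argument.
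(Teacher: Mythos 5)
Your proposal follows essentially the same strategy as the paper's proof: start from a discrete series whose central character comes from $W_0r$, pass to its scaling limit to obtain an element of $\Bc_{gm}$, and invoke Corollary~\ref{cor:gVcc}. The norm-one argument pinning $b'=\pm b$ is correct (though more detail than the paper bothers with). However, there is a genuine gap in the final step, exactly at the point you yourself flag as ``subtle bookkeeping''.

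The gap is the upgrade from $W_0r_b(\vb_0)=W_0r(\vb_0)$ to $W_0r_b=W_0r$ as generic orbits. You start from a $\pi_{\vb_0}$ satisfying only the pointwise condition $cc(\pi_{\vb_0})=W_0r(\vb_0)$, whereas the conclusion you need is about the \emph{generic} central character. At an arbitrary $\vb_0\in\Qc^{reg}_{W_0r}$ it is perfectly possible that two distinct generic residual orbits $W_0r\neq W_0r'$ specialize to the same orbit $W_0r(\vb_0)=W_0r'(\vb_0)$; in that case the discrete series you chose might lie in the family with $gcc_{\vb_0}(\pi_{\vb_0})=W_0r'$, and your argument would produce $gcc_B(b)=W_0r'$, not $W_0r$. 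Your attempted repair via \cite[Theorem~3.2]{COT} does not close this: the Vogan theorem identifies the central character of $U$ at the \emph{single} parameter $k_{s,\vb_0}$ with $W(R_{s,1})\xi_s(\vb_0)$, so you again obtain only the pointwise equality $W(R_{s,1})\xi(\vb_0)=W(R_{s,1})\xi_s(\vb_0)$. Deducing ``$\xi=\xi_s$ up to $W(R_{s,1})$'' from this one specialization is precisely the separation statement you are missing, and it fails without a genericity hypothesis on $\vb_0$.

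Two clean ways to fix it. (i) Do what the paper does: restrict $\vb_0$ to the open dense set $\Qc^{gen}$ on which, by definition, $W_0r(\vb_0)=W_0r'(\vb_0)$ already forces $W_0r=W_0r'$; then the pointwise identity you proved immediately gives the generic one. (ii) Avoid the issue from the start by invoking the stronger form of \cite[Corollary~5.9]{OpdSol2} actually used elsewhere in the paper: for $\vb_0\in\Qc^{reg}_{W_0r}$, the set $\textup{DS}_{W_0r,\vb_0}$ of irreducibles with \emph{generic} central character $W_0r$ is nonempty. Picking $\pi_{\vb_0}\in\textup{DS}_{W_0r,\vb_0}$ gives $gcc_{\vb_0}(\pi_{\vb_0})=W_0r$ directly, and then Corollary~\ref{cor:gVcc} finishes in one line, with no detour through Clifford theory or the Vogan theorem at all.
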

\begin{proof} In \cite{Opd1} it was shown (also see \cite{OpdSol2}) 
that for any $\vb\in\Qc$, every orbit of residual points  
of $\Hc_\vb$ is the specialization at $\vb$ of a generic orbit of residual points $W_0r$ at $\vb$. 
The main theorem of \cite{Opd1} states that for any $\vb\in\Qc$, and any  
orbit of residual cosets $W_0r(\vb)$, there exists a discrete series character $\pi$ of $\Hc_\vb$ such that 
$W_0r(\vb)$ is the central character of $\pi$. The set $W_0\backslash\textup{Res}(\Rc)$ is finite, hence 
$\Qc^{gen}:=\{\vb\in \cap_{W_0r\in W_0\backslash\textup{Res}(\Rc)}\Qc^{reg}_{W_0r}\mid 
\mathrm{For\ all\ }r,r'\in\textup{Res}(\Rc):\ W_0r(\vb)=W_0r'(\vb)\mathrm{\ only\ if\ }Wr=W_0r'\}$ 
is open, dense in $\Qc$. 
Let $\vb\in\Qc^{gen}$ and $W_0r\in W_0\backslash\textup{Res}(\Rc)$. By the above there exists a 
$\pi\in \Delta_\vb$ such that $cc(\pi)=W_0r(\vb)$. Since $\vb\in\Qc^{gen}$ this implies that 
$gcc_\vb(\pi)=W_0r$.  
(For the definition of the generic central character map $gcc_\vb$: See \cite[Definition 5.4]{OpdSol2}.)
Put $b:=\lim_{\ep\to 0}\pi_{\vb^\ep}$. By Corollary \ref{cor:gVcc} we have $gcc_B(b)=W_0r$.
\end{proof}
Denote by $\Bc_{W_0r}\subset \Bc_{gm}$ 
the fibre $gcc_\Bc^{-1}(W_0r)$ of the map from Theorem \ref{thm:Vogan}.

\section{One-dimensional algebraic families of discrete series representations, and their limits}\label{sec:familiesandlimits}
\subsection{One-dimensional algebraic families of discrete series}\label{sec:fan}
Given a generic residual point $r\in\textup{Hom}(X,\Lambda^\times)$ we know that 
$r(\vb)$ is a residual point for all $\vb\in\Qc_{W_0r}^{reg}$, which is the complement of finitely many 
hyperplanes in $\Qc$. By \cite[Corollary 5.9]{OpdSol2}) we have a nonempty set of 
irreducible discrete series characters of $\Hc_\vb$ with \emph{generic central character} $W_0r$. 
Denote this nonempty set by:  
$$\textup{DS}_{W_0r,\vb}:=\{\pi\in\Delta(\Hc_\vb)\mid gcc(\pi)=b\}$$
Hence $W_0r(\vb)$ is the central character of the following nonempty union: 
$$\textup{DS}_{W_0r(\vb)}:=\cup_{\{W_0r'\mid W_0r'(\vb)=W_0r(\vb)\}}\textup{DS}_{W_0r',\vb}$$

Let $\vb\in\Qc_\Cb$. For each $r'(\vb)\in W_0r(\vb)$, choose a convex open neighborhood 
$U_{r'(\vb)}$ of $r'(\vb)\in T$ such that the only residual cosets of the 
$\mu$-function $\mu_{\Rc,\vb}$ of $\Hc_\vb$ which intersect $U_{r'(\vb)}$ in fact contain $r'(\vb)$.
Let $U_{r'(\vb)}^{reg}=U_{r'(\vb)}\cap T^{reg}$ be the complement in $U_{r'(\vb)}$ of the union of the set of residual cosets  
of $\mu_{\Rc,\vb}$ in $T$. The choice of $U_{r'(\vb)}$ as above implies that 
$U_{r'(\vb)}^{reg}$ is homeomorphic to the complement of a central hyperplane arrangement in 
a complex vector space with origin $r'(\vb)$. In particular, the homology group 
$H_n(U_{r'(\vb)}^{reg},\Zb)$ only depends 
on the local structure of the pole hyperplane arrangement at $r'(\vb)$ and if we would shrink $U_{r'(\vb)}$ 
to a smaller convex open neighborhood $U_{r'(\vb)}^{\prime,reg}\subset U_{r'(\vb)}$ of $r'(\vb)$ 
this would induce a canonical 
isomorphism $H_n(U_{r'(\vb)}^{\prime,reg},\Zb)=H_n(U_{r'(\vb)}^{reg},\Zb)$.
Let us denote the direct limit $\varinjlim H_n(U_{r'(\vb)}^{\prime,reg},\Zb)$ by $H_{n,r'(\vb)}(\Zb)$.

In general, if $U\subset T$ is homeomorphic to a complex ball $B\subset \mathfrak{t}$, 
via the exponential mapping of the complex algebraic torus $T$, then $U^{reg}:=U\cap T^{reg}$
is homeomorphic to the intersection of $B$ with the complement of an affine hyperplane arrangement.  
In this section we will need some basic facts about the topology of hyperplane 
arrangements, cf. \cite{OT}, \cite{SV}.
By \cite[Paragraph 4.4]{SV} it easily follows that $H_n(U^{reg},\Zb)$ is 
in a canonical 
way a \emph{direct sum} of the $H_{n,p}(\Zb)$ where $p$ runs over the set of points of $U$ which lie in the  
intersection lattice generated by the codimension one residual cosets of $\mu_{\Rc,\vb}$. Let 
$\pi_p:H_n(U^{reg},\Zb)\to H_{n,p}(\Zb)$ denote the corresponding projection.
 
Let us denote the collection of open sets $\{U_{r'(\vb)}\}_{r'\in W_0r}$ by $\Uc$.
Let $\Oc_{\Uc,\vb}\subset \Qc_\Cb$ be an open ball with center $\vb$ 
with the property that for all $\vb'\in \Oc_{\Uc,\vb}$ and all $r'\in W_0r$ we have $r'(\vb')\in U_{r'(\vb)}$.
Given $\vb\in\Qc_\Cb$ and a homology class $[\xi_{r'(\vb)}]\in  H_n(U_{r'(\vb)},\Zb)$ 
for each $r'(\vb)\in W_0r(\vb)$, this defines for all $\vb'\in \Oc_{\Uc,\vb}$ a unique 
class $\pi_{r'(\vb')}([\xi_{r'(\vb')}])\in H_{n,r'(\vb')}(\Zb)$. It is easy to see that this procedure defines a topology basis of
the \'etale space of a sheaf $\Fc^H_{n,r'}$ of abelian groups over $\Qc_\Cb$,  
with stalks $H_{n,r'(\vb)}(\Zb)$.  Let $\Qc_{W_0r,\Cb}^{gen}\subset \Qc_\Cb$ be the Zariski open set of 
$\vb\in\Qc_\Cb$ such that $|W_0r(\vb')|$ is locally constant at $\vb$, and such that if $r'$ is a  
generic residual point such that $W_0r(\vb)=W_0r'(\vb)$ then $W_0r=W_0r'$.
\footnote{Using the results of \cite{Opd3} it is easy to see that $\Qc^{gen}_{W_0r,\Cb}\cap \Qc=\Qc^{gen}_{W_0r}$, in the notation of \cite{OpdSol2}.}
Clearly, the sheaf $\Fc^H_{n,r}$ is locally trivial in the analytic topology on $\vb\in \Qc_{W_0r,\Cb}^{gen}$. We have shown: 
\begin{lem}\label{lem:sheaf}
For each generic residual point $r$, the homology groups $H_{n,r(\vb)}(\Zb)$ ($\vb\in\Qc_\Cb$) are the stalks of a sheaf $\Fc^H_{n,r}$ (in the analytic topology) 
of finitely generated abelian groups on $\Qc_\Cb$, which is locally trivial on the Zariski open set $\Qc_{W_0r,\Cb}^{gen}$.  
\end{lem}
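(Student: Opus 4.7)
The strategy is to construct $\Fc^H_{n,r}$ explicitly via its \'etale space, using as basic opens the neighborhoods $\Oc_{\Uc,\vb}$ described in the paragraph preceding the statement, and then to read off finite generation and local triviality from standard properties of the Orlik--Solomon algebra and the Schechtman--Varchenko decomposition.

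First I would verify that the construction $H_{n,r(\vb)}(\Zb) := \varinjlim H_n(U_{r(\vb)}^{\prime,reg},\Zb)$ is canonical. Any two convex contractible neighborhoods of $r(\vb)$ meeting only the residual cosets through $r(\vb)$ admit a common refinement, and each inclusion induces an isomorphism on homology because the arrangement of residual cosets intersected with such a $U$ is conic at $r(\vb)$ and therefore deformation retracts onto the link of the central arrangement. By \cite[Paragraph 4.4]{SV} the group $H_n(U^{reg},\Zb)$ splits canonically as $\bigoplus_p H_{n,p}(\Zb)$, indexed by the points $p$ of the intersection lattice of the codimension-one residual coset arrangement in $U$, with natural projections $\pi_p$. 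Finite generation of each stalk follows at once from Orlik--Solomon \cite{OT}, since $H_{n,r(\vb)}(\Zb)$ is a direct summand of the top homology of the complement of a finite central arrangement.

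To upgrade the $\vb$-dependence to a sheaf, fix $\vb\in\Qc_\Cb$, a collection $\Uc = \{U_{r'(\vb)}\}_{r'\in W_0 r}$ as in the excerpt, and a class $[\xi]\in H_{n,r(\vb)}(\Zb)$ represented on $U_{r(\vb)}^{reg}$. I would declare
\[
V([\xi],\Uc) := \{(\vb',\pi_{r(\vb')}([\xi])) \mid \vb'\in\Oc_{\Uc,\vb}\}
\]
to be a basic open in the \'etale space over $\Oc_{\Uc,\vb}$; this is well posed because $r(\vb')\in U_{r(\vb)}$ for $\vb'\in\Oc_{\Uc,\vb}$, so the projection $\pi_{r(\vb')}$ from the Schechtman--Varchenko decomposition of $H_n(U_{r(\vb)}^{reg},\Zb)$ is available. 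Compatibility of overlapping basic opens is a direct consequence of the naturality of the decomposition under shrinking of the convex neighborhoods. This assembles into an \'etale space over $\Qc_\Cb$ whose sheaf of sections is $\Fc^H_{n,r}$, with stalk $H_{n,r(\vb)}(\Zb)$ at each $\vb$.

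For local triviality on $\Qc_{W_0r,\Cb}^{gen}$, I would fix $\vb$ in this Zariski open set. The conditions defining $\Qc_{W_0r,\Cb}^{gen}$ guarantee that $|W_0r(\vb')|$ is locally constant near $\vb$ and that no other $W_0$-orbit of generic residual points collides with $W_0r$ there. Combined with the rationality in $\vb$ of the defining equations of the codimension-one poles of the $\mu$-function, this ensures that on a sufficiently small ball $\Oc\subset\Oc_{\Uc,\vb}$ the germ at $r(\vb')$ of the codimension-one residual coset arrangement of $\mu_{\Rc,\vb'}$ is isotopic through arrangements of the same combinatorial type to the germ at $r(\vb)$ of the arrangement of $\mu_{\Rc,\vb}$. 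Consequently $\pi_{r(\vb')}$ restricts to an isomorphism onto $H_{n,r(\vb')}(\Zb)$, which trivializes $\Fc^H_{n,r}|_\Oc$. The hard part will be this last claim of combinatorial constancy: one must exclude both the splitting of several residual cosets off $r(\vb)$ as $\vb$ moves and the birth of new codimension-one residual cosets through $r(\vb')$. Both exclusions are consequences of the explicit product formula for $\mu_{\Rc,\vb}$ together with the analysis of residual points in \cite{OpdSol2} and the diagram automorphism invariance of \cite{Opd3}, but spelling out the isotopy explicitly (and hence promoting the surjection $\pi_{r(\vb')}$ to an isomorphism) is the technical heart of the argument.
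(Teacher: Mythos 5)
Your construction coincides with the paper's own argument (the paragraph preceding the lemma, which concludes with ``We have shown''): both build the \'etale space from the basic opens $\Oc_{\Uc,\vb}$ together with the Schechtman--Varchenko projections $\pi_p$, and appeal to Orlik--Solomon for finite generation of the stalks. The one step you flag as the technical heart---that on $\Qc^{gen}_{W_0r,\Cb}$ the combinatorial type of the germ of the residual coset arrangement at $r(\vb')$ is locally constant, so that $\pi_{r(\vb')}$ restricts to an isomorphism---is precisely the step the paper dispatches with ``Clearly'', so your assessment of where the real work lies is accurate and your proposal matches the paper's proof both in substance and in what it leaves implicit.
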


The main results of \cite{Opd1} and of \cite{OpdSol2} show that for each 
$\vb\in\Qc^{reg}_{W_0r}$ we can choose, for each $r'(\vb)\in W_0r(\vb)$, 
classes $\xi_{r'(\vb)}\in H_{n,r'(\vb)}(\Zb)$ and, for each 
$\chi\in\textup{DS}_{W_0r(\vb)}$, constants $c_{\chi,C}\in\Qb_+$ which only depend on the 
connected component $C=C_{\chi,\vb}\subset \Qc_{\textup{gcc}_\vb(\chi)}^{reg}$ to which $\vb$ belongs
(where $\textup{gcc}_\vb(\chi)=W_0r''$ denotes the generic central character \cite[Definition 5.4]{Opd1} 
of $\chi$), 
such that for all $h\in\Hc_\vb$:
\begin{equation}\label{eq:dschar}
\sum_{\chi\in \textup{DS}_{W_0r(\vb)}}c_{\chi,C}\chi_\vb(h)=
m_{W_0r}(\vb)^{-1}\sum_{r'(\vb)\in W_0r(\vb)} \int_{t\in \xi_{r'(\vb)}}K_\vb(h,t)
 \end{equation}
where $K_\vb(h,t)=E_t(\vb;h)\Delta(t)^{-1}\mu_{\Rc,\vb}dt$ 
is a rational $(n,0)$-form in $t\in T$, with 
$\Delta(t):= \prod_{\alpha>0}(1-\alpha(t)^{-1})$ and with 
$E_t(\vb;h)$ a 
linear functional in $h\in\Hc_\Lambda$, which is regular on $(t,\vb)\in T\times \Qc_\Cb$,  
and such that $E_t(\vb;a)=a(t)\Delta(t)$ for all $a\in \Ac$.

We now prove a fundamental continuity property of (\ref{eq:dschar}) 
(or even the full Plancherel decomposition of the trace of $\Hc_\vb$)  
with respect to the topology of the sheaves $\Fc^H_{n,r}$.
\begin{lem}\label{lem:sect} Let $r$ be a generic residual point 
and let $\vb\in\Qc_{W_0r}^{reg}$. For $r'\in W_0r$ 
and for $\vb'$ in a sufficiently small neighborhood of $\vb$, the 
local homology classes $[\xi_{r'(\vb')}]\in H_{n,r'(\vb')}(\Zb)$ in (\ref{eq:dschar}) 
form a local section of $\Fc^H_{n,r'}$.  
\end{lem}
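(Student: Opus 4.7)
The plan is to use the explicit construction of the cycles $\xi_{r'(\vb)}$ appearing in (\ref{eq:dschar}), which are produced in \cite{Opd1} (and further discussed in \cite{OpdSol2}) through an iterative contour deformation starting from the unitary torus $T_u$ and successively picking up local residues at the residual points of $\mu_{\Rc,\vb}$. The aim is to verify that this whole procedure depends continuously on $\vb$ throughout $\Qc^{reg}_{W_0r}$ in the precise sense required for a local section of $\Fc^H_{n,r'}$ in the topology described just before Lemma \ref{lem:sheaf}.

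First, I would fix $\vb_0 \in \Qc^{reg}_{W_0r}$ and choose a convex open neighborhood $\Oc \subset \Qc^{reg}_{W_0r}$ of $\vb_0$ such that, for every $r' \in W_0r$, the rational map $\vb \mapsto r'(\vb)$ sends $\Oc$ into the chosen convex open $U_{r'(\vb_0)}$. Shrinking $\Oc$ further, one may assume in addition that the collection of residual cosets of $\mu_{\Rc,\vb}$ meeting $U_{r'(\vb_0)}$ is combinatorially independent of $\vb\in\Oc$; this is possible because the condition $\vb_0\in\Qc^{reg}_{W_0r}$ forbids any new residual hyperplane from collapsing onto $r'(\vb_0)$.

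Next, I would show that with these choices the cycle $\xi_{r'(\vb)}$ can be realised as a small real torus linked to $r'(\vb)$ inside the complement of the pole arrangement of $\mu_{\Rc,\vb}$, depending smoothly on $\vb$ as long as the combinatorial type of this arrangement is preserved. Concretely, after possibly shrinking $\Oc$, the family $\{\xi_{r'(\vb)}\}_{\vb\in\Oc}$ is represented by a single continuous family of cycles sitting in a common open subset $V\subset U_{r'(\vb_0)}$; the corresponding single homology class then projects under $\pi_{r'(\vb)}$ to $[\xi_{r'(\vb)}]$ for every $\vb\in\Oc$, which is exactly the data of a local section of $\Fc^H_{n,r'}$ over $\Oc$.

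The main obstacle is to verify rigorously that the residue-theoretic definition of the cycles in \cite{Opd1} is genuinely continuous in $\vb$ throughout $\Qc^{reg}_{W_0r}$: a priori, a jump in the local homology class could occur if a residual coset of $\mu_{\Rc,\vb}$ were to collapse onto $r'(\vb)$ or if a new higher-codimensional intersection of pole hyperplanes were to appear, but the condition $\vb_0\in \Qc^{reg}_{W_0r}$ prohibits both scenarios in a neighborhood of $\vb_0$. Should tracking the iterative residue construction become delicate, one can fall back on a uniqueness argument: by the continuous dependence of discrete series characters on $\vb$ from \cite{OpdSol2} and the local constancy of the coefficients $c_{\chi,C}$, the LHS of (\ref{eq:dschar}) varies continuously in $\vb\in\Oc$; since the integrand $K_\vb(h,t)$ is analytic in $\vb$ and the local homology classes form a discrete set in each stalk, the chosen classes $[\xi_{r'(\vb)}]$ must vary as a local section, yielding the lemma.
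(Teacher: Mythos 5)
Your overall strategy is close in spirit to the paper's, but the crucial step is asserted rather than proved. Both arguments fix a combinatorially stable neighborhood of $\vb_0$ in $\Qc^{reg}_{W_0r}$ and aim to show that the cycles $\xi_{r'(\vb)}$ of \cite{Opd1} can be chosen to vary as a local section. The paper's proof, however, does not attempt to track the iterative contour-shifting construction directly. Instead it invokes \cite[Lemma 3.4, Proposition 3.7]{Opd1}, where the system of cycles $\xi_{L(\vb)}$ (indexed by quasi-residual cosets $L$) is characterized by three \emph{local} properties (items (i), (ii), (iii) of Proposition 3.7) together with the global homology requirement that $t_0T_u\sim \cup_L\xi_{L(\vb)}\times T_u^L$. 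The paper's observation is precisely that these local properties can be satisfied by cycles representing a local section of $\Fc^H_{n,r_{L'}}$ for $\vb'$ near $\vb$, and that the homology identity with $t_0T_u$ then holds automatically (up to irrelevant quasi-residual pieces). This characterization is what forces the classes in (\ref{eq:dschar}) to coincide with the local section; your writeup never reaches this step and simply states that the family "is represented by a single continuous family of cycles," which is the whole content of the lemma.

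Two further remarks. First, describing $\xi_{r'(\vb)}$ as "a small real torus linked to $r'(\vb)$" is inaccurate in general: the stalk $H_{n,r'(\vb)}(\Zb)$ is the top local homology of the complement of a central arrangement, whose rank can be strictly larger than one, and the relevant class is typically a nontrivial combination of generators. Second, your fallback argument has a genuine gap: knowing that the left-hand side of (\ref{eq:dschar}) varies continuously does not force the individual local classes to vary continuously, because (i) the right-hand side is a \emph{sum} over $r'(\vb)\in W_0r(\vb)$, and jumps in different summands could cancel; and (ii) the map from local homology classes to integrals against $K_\vb(h,\cdot)$ is not a priori injective for each fixed $h$. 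One would need an additional separation argument (e.g. varying $h$ enough to distinguish classes), which you do not supply. In short, the idea that regularity of $\vb_0$ keeps the arrangement stable is correct and shared with the paper, but the rigorous mechanism that converts this stability into the desired local-section statement — the uniqueness built into \cite[Proposition 3.7]{Opd1} — is missing from your proposal.
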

\begin{proof}
We use the analogous construction as above of  
sheaves $\Fc^H_{n,r_L}(\Zb)$ for all $r_L\in T_L$ a generic residual point. 
Recall the residue lemma \cite[Lemma 3.4]{Opd1}. By \cite[Proposition 3.7]{Opd1} we can 
realize the local traces $\Xf_c$ defined in \cite[Lemma 3.4]{Opd1}  "explicitly" by a system of local cycles 
$\xi_{L(\vb)}\subset \Bc_{L(\vb)}(r_L(\vb),\delta)\subset T_{L(\vb)}$ 
(for all quasi-residual 
cosets $L(\vb)\subset T$) such that for $t_0\in T_v$ deep in the negative Weyl chamber, the cycles 
$t_0T_u$ and $\cup_L \xi_{L(\vb)}\times T^L_u$ are homologous. 
Moreover, the $\xi_{L(\vb)}$ 
satisfy certain local properties (cf. \cite[proposition 3.7]{Opd1}, 
items (i), (ii) and (iii)) 
which guarantee that for all $h\in\Hc_\vb$, the functional $\mathbb{C}[T]\ni f\to\int_{\xi_{L(\vb)}\times T^L_u}f(t)K_\vb(h,t)$ can be written as 
a distribution on $r_L(\vb)T_u$ with support in $L(\vb)^{temp}=r_L(\vb)T^L_u$, applied to $f$ (which is zero if $L_\vb$ is not a residual coset).  

The definition of the local sections of the sheaves $\Fc^H_{n,r_{L'}}$ is such that the local properties ((i), (ii) and (iii) mentioned 
in \cite[Proposition 3.7]{Opd1}) can be satisfied by cycles $\xi_{L'(\vb')}\subset \Bc_{L'(\vb')}(r_{L'}(\vb'),\delta)\subset T_{L'(\vb')}$ 
representing the classes $[\xi_{L'(\vb')}]\in H_{n,r_{L'}(\vb')}(\Zb)$ for $\vb'$ in a small neighborhood of $\vb$.
Moreover, it is automatic that $t_0T_u$ is homologous to the union  of $\cup_{L} \xi_{L(\vb')}\times T^L_u$ (union over 
the generic residual cosets $L$) with a 
collection of cycles of the form $ \xi_{L'(\vb')}\times T^{L'}_u$ where $L'(\vb')$ is quasi-residual (which are irrelevant). 
This implies the result.
\end{proof}
\begin{cor}\label{cor:cont}
For $\vb\in\Qc^{reg}_{W_0r}$, the left hand side of (\ref{eq:dschar}) extends to a  
central functional on $\Hc_{\vb'}$ for all $\vb'$ in a Zariski open neighborhood of $\vb$ 
in $\Qc_\Cb$. 
For $h\in\Hc_\Lambda$ this functional takes values in the quotient field $Q_\Lambda$ of $\Lambda$.
The values of this functional restrict to continuous functions on the connected component $C$ of 
$\vb$ in $\cap_{\chi\in \textup{DS}_{W_0r(\vb)}}\Qc^{reg}_{\textup{gcc}_\vb(\chi)}$. 
\end{cor}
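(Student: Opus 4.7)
The plan is to take the right-hand side of (\ref{eq:dschar}) as the \emph{definition} of the extension, and then to verify rationality in $\vb'$, centrality, and continuity on $C$ in turn, using Lemma \ref{lem:sect} together with the rational structure of the kernel $K_\vb(h,t)$.

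First I would use Lemma \ref{lem:sect} to extend the integration cycles. By that lemma, for each $r'\in W_0r$ the local homology class $[\xi_{r'(\vb)}]$ extends as a local section $\vb'\mapsto [\xi_{r'(\vb')}]$ of $\Fc^H_{n,r'}$ over some analytic neighborhood $U\subset \Qc_\Cb$ of $\vb$. Since the neighborhoods $U_{r'(\vb)}^{reg}$ were chosen to avoid the residual cosets of $\mu_{\Rc,\vb}$, cycle representatives of these classes avoid the pole divisor of $K_{\vb'}(h,t)$ for all $\vb'\in U$. Consequently
\[
\Phi_h(\vb'):=m_{W_0r}(\vb')^{-1}\sum_{r'(\vb')\in W_0r(\vb')}\int_{\xi_{r'(\vb')}}K_{\vb'}(h,t)
\]
is a well-defined function of $\vb'$ on the Zariski open subset $U\cap\{m_{W_0r}\neq 0\}$, which is a Zariski open neighborhood of $\vb$ in $\Qc_\Cb$.

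Second, I would argue rationality in $\vb'$ and $Q_\Lambda$-valuedness. The kernel $K_{\vb'}(h,t)$ is a rational $(n,0)$-form in $(t,\vb')\in T\times\Qc_\Cb$ whose $t$-poles lie along residual cosets, and, when $h\in\Hc_\Lambda$, has coefficients in $\Lambda$. Invoking the residue lemma \cite[Lemma 3.4]{Opd1} together with \cite[Proposition 3.7]{Opd1}, the cycle $\xi_{r'(\vb')}$ is homologous to an iterated Leray coboundary around a flag of residual cosets through $r'(\vb')$, so the integral is given by an iterated residue. Such a residue is a rational function of $\vb'$ whose coefficients lie in $\Lambda$, and multiplication by the rational factor $m_{W_0r}(\vb')^{-1}$ preserves this. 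Hence $\Phi_h$ is a rational function of $\vb'$ on a Zariski open neighborhood of $\vb$, taking values in $Q_\Lambda$ for $h\in\Hc_\Lambda$.

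Third, centrality and continuity on $C$ follow from (\ref{eq:dschar}) itself. For $\vb'\in C$ the right-hand side of (\ref{eq:dschar}) coincides with $\sum_{\chi\in\textup{DS}_{W_0r(\vb')}}c_{\chi,C}\chi_{\vb'}(h)$, a genuine central functional on $\Hc_{\vb'}$. Since $C$ is open in a Zariski neighborhood of $\vb$, and the requirement that a linear functional be tracial is a Zariski-closed condition on $\Hc_{\vb'}$ (holding pointwise on commutators), $\Phi_{(\cdot)}(\vb')$ is central for all $\vb'$ in the Zariski open neighborhood. Continuity of $\vb'\mapsto \Phi_h(\vb')$ on $C$ is then immediate, since $\Phi_h$ is the restriction to $C$ of a rational function regular on the Zariski open set containing $C$.

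The main obstacle, in my view, is precisely the rationality step: a priori Lemma \ref{lem:sect} only delivers an analytic section of $\Fc^H_{n,r'}$ on $U$, and so the integral is merely holomorphic on $U$. Upgrading this to rationality (and simultaneously to $Q_\Lambda$-valuedness when $h\in\Hc_\Lambda$) requires identifying the analytic cycle $\xi_{r'(\vb')}$ with an algebraic iterated residue cycle around the flag of residual cosets through $r'(\vb')$; this is exactly the content of \cite[Proposition 3.7]{Opd1} read over the generic ring $\Lambda$. Once this identification is in place, rationality, $Q_\Lambda$-valuedness, centrality, and continuity on $C$ all follow as above.
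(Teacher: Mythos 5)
Your overall strategy coincides with the paper's: take the right-hand side of (\ref{eq:dschar}) with cycles continued via local sections of $\Fc^H_{n,r'}$, establish rationality by residue calculus, and obtain centrality from Zariski density of the real cone $C$. Two minor imprecisions aside (the set $U\cap\{m_{W_0r}\neq 0\}$ in your first step is only analytic-open, Zariski-openness arriving only once rationality is established; and the paper further decomposes the left-hand side into the pieces $\Sigma_{W_0r'}$ indexed by the generic orbits $W_0r'$ with $W_0r'(\vb)=W_0r(\vb)$, a decomposition that matters when distinct generic orbits coalesce at $\vb$ and is used in the Remark that follows the Corollary), these verifications run much as in the paper.

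The genuine gap is the continuity on $C$. You write that continuity is ``immediate, since $\Phi_h$ is the restriction to $C$ of a rational function regular on the Zariski open set containing $C$.'' But nothing you have proved shows that the pole divisor of the rational function $\Phi_h$ is disjoint from $C$: rationality by itself gives no such control, and your Zariski neighborhood of the single point $\vb$ need not contain all of $C$. The paper closes this gap by citing \cite[Theorem 3.4, Proposition 3.7]{OpdSol2}, which asserts directly that the family $\vb'\mapsto\sum_{\chi}c_{\chi,C}\chi_{\vb'}(h)$ extends continuously over the whole connected component; since this family coincides with $\Phi_h$ on an open subset of $C$ and $\Phi_h$ is rational, this forces $\Phi_h$ to be pole-free on $C$. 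Alternatively one can bound $|\Phi_h|$ on $C$ via positivity of the central functional together with the operator norm estimate \cite[Corollary 2.17(i)]{Opd1}, as in the proof of Lemma \ref{lem:cont}. Either way, some external input beyond rationality is required, and ``immediate'' overstates the case.
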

\begin{proof}
We extend the left hand side in a neighborhood of $\vb$ by taking a local sections of 
classes $[\xi_{r'(\vb)}]$ in $\Fc^H_{n,r'}$, and use the right hand side of (\ref{eq:dschar}) to 
define the left hand side. We know from \cite{OpdSol2} that the left hand side extends, for all $h\in\Hc_\Lambda$, 
continuously on $\cap_{\chi\in \textup{DS}_{W_0r(\vb)}}\Qc^{reg}_{\textup{gcc}_\vb(\chi)}$. 
Let $\chi\in\textup{DS}_{W_0r(\vb)}$ and let $\textup{gcc}_\vb(\chi):=W_0r'$. 
Let $\vb'\in \cap_{\chi\in\textup{DS}_{W_0r(\vb)}}C_{\chi,\vb}\cap \Qc_{\textup{gcc}_\vb(\chi)}^{gen}$.
Then the left hand side of (\ref{eq:dschar}) can be written as $\sum_{\{W_0r'\mid W_0r'(\vb)= W_0r(\vb)\}}
\Sigma_{W_0r'}(h;\vb')$  
with: 
\begin{equation}\label{eq:stable}
\Sigma_{W_0r'}(h;\vb'):=\sum_{\chi\in \textup{DS}_{W_0r',\vb'}}c_{\chi,C}\chi_{\vb'}(h) 
\end{equation}
We have $$\Sigma_{W_0r'}(h;\vb')=m_{W_0r'}(\vb')^{-1}\sum_{r''(\vb')\in W_0r'(\vb')} \int_{t\in \xi_{r''(\vb')}}K_{\vb'}(h,t),$$ 
where, for $\vb'$ in a sufficiently small neighborhood of $\vb$,  
the class $[\xi_{r''(\vb')}]$ is equal to the image $\pi_{r''(\vb')}(\xi_{r''(\vb)})\in H_{n,r''(\vb')}(\Zb)$ 
of the constant cycle $\xi_{r''(\vb)}$, 
by Lemma \ref{lem:sheaf} and Lemma \ref{lem:sect}.
It follows that in a small analytic neighborhood of $\vb$ the expression $\Sigma_{W_0r'}(h;\vb'')$ is given by  
an element of $Q_\Lambda$. This extends $\vb'\to \Sigma_{W_0r'}(h;\vb')$ to a rational function on a 
Zariski-open set of $\Qc_\Cb$. 

The continuity of the values of $\Sigma_{W_0r'}(h;\vb')$ in the connected component $C\subset \Qc^{reg}_{W_0r'}$ 
of $\vb'$ follows from 
\cite[theorem 3.4, Proposition 3.7]{OpdSol2}, and this implies the continuity assertion in the theorem.
\end{proof}
\begin{rem} 
We note that $\Sigma_{W_0r}(h;\cdot)$ is not rational  on 
$\cap_{\chi\in \textup{DS}_{W_0r(\vb)}}\Qc^{reg}_{\textup{gcc}_\vb(\chi)}$. 
The rational functions of Corollary \ref{cor:cont} depend on the connected component 
of $\vb$ in $\cap_{\chi\in \textup{DS}_{W_0r(\vb)}}\Qc^{reg}_{\textup{gcc}_\vb(\chi)}$. 
\end{rem}
Let $\Cf\subset \Qc$ be (the set of complex points of) an irreducible real algebraic curve which is 
not contained in $\cup_{W_0r}\Qc^{sing}_{W_0r}$. 
Let $\Rf$ be the ring of regular functions of $\Cf$, with quotient field 
$\Kf$. Thus $\Rf$ is a quotient of $\Lambda$, and we can form $\Hc_\Rf:=\Hc_\Lambda\otimes_\Lambda\Rf$. 
Let $P=\vb_0\in\Cf$ be a smooth point of $\Cf$, and let $C\subset \Qc^{reg}_{W_0r}$ be a connected 
component such that $\vb_0\in\overline{C}$.  Let $\vb'\in C\cap \Qc^{gen}_{W_0r}$, and consider 
$\Sigma_{W_0r}(h;\vb')$. According to Corollary \ref{cor:cont}, this positive rational linear combination 
of the discrete series characters with central character $W_0r(\vb')$  extends  as a rational function in $\vb'$ 
which is continuous on $C$. 
By Corollary \ref{cor:cont} we can restrict the values of $\Sigma_{W_0r}(h;\vb')$ to $\vb'\in \Cf\cap C$. By 
the argument of \cite[Proposition 7]{KoNow} it is clear that this restriction defines a rational function on $\Cf$ 
which we will denote by 
$\Sigma_{W_0r,C}^\Cf(h)\in\Kf$, depending linearly on $h\in\Hc_\Rf$.  
These rational functions are continuous on $\Cf\cap C$. 
\begin{lem}\label{lem:cont} Assume that $\Cf\cap \overline{C}$ consists of smooth points and let 
$P=\vb_0\in\Cf\cap \overline{C}$. 
For all $h\in\Hc_\Rf$ we have:   The rational function 
$\Sigma_{W_0r,C}^\Cf(h)$ is regular in $P=\vb_0$. There exists a Zariski open neighborhood $\Uf\subset \Cf$
of $\Cf\cap\overline{C}$ such that $\Sigma_{W_0r,C}^\Cf(h)\in \Rfu$ for all $h\in\Hc_\Rfu$. 
\end{lem}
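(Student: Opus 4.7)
The plan is to reduce the regularity assertion to a boundedness statement on $\Cf$ and then establish the latter using the contour-integral representation of $\Sigma_{W_0r}$ in (\ref{eq:dschar}) together with Corollary \ref{cor:cont} and Lemma \ref{lem:sect}. Since $P=\vb_0$ is a smooth point of $\Cf$, the local ring $\Oc_{\Cf,P}$ is a discrete valuation ring, so a rational function $f \in \Kf$ is regular at $P$ if and only if $f$ is bounded in some neighborhood of $P$ along $\Cf$. It therefore suffices, for each fixed $h\in\Hc_\Rf$, to bound $\Sigma^\Cf_{W_0r,C}(h)$ uniformly as $\vb'\to P$ along $\Cf\cap C$.

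To establish boundedness, I would argue as follows. By Lemma \ref{lem:sect} the cycles $[\xi_{r'(\vb')}]$ extend to a local section of $\Fc^H_{n,r'}$ in an analytic neighborhood of $\vb_0$ on $\Cf$, and by Lemma \ref{lem:sheaf} these sections can be realized by a smoothly varying family of compact cycles (possibly undergoing mild degenerations as residual cosets coalesce at $\vb_0$). Although the prefactor $m_{W_0r}(\vb')^{-1}$ in (\ref{eq:dschar}) may blow up at $\vb_0$, this blow-up is by design cancelled by vanishing contributions from the degenerating cycles: the entire right-hand side of (\ref{eq:dschar}) equals the finite positive combination $\sum_{\chi\in \textup{DS}_{W_0r,\vb'}} c_{\chi,C}\,\chi_{\vb'}(h)$, and the continuity of this expression as $\vb'\to\vb_0$ along $\Cf\cap C$ follows from the continuity of the full Plancherel decomposition over $\Qc$ proved in \cite[Theorem 3.4, Proposition 3.7]{OpdSol2}, which is precisely the input already exploited in the proof of Corollary \ref{cor:cont}. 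This yields the desired boundedness, hence regularity at $P$.

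For the Zariski-open refinement I would use linearity of $\Sigma^\Cf_{W_0r,C}$ in $h$ together with the Bernstein-Lusztig-Zelevinski finite generation of $\Hc_\Lambda$ over its center $Z(\Hc_\Lambda)=\Lambda[X]^{W_0}$. Since $\Sigma^\Cf_{W_0r,C}(\cdot;\vb')$ is a central functional whose central character is the generic orbit $W_0r$, it satisfies $\Sigma^\Cf_{W_0r,C}(zh)=z(W_0r)\,\Sigma^\Cf_{W_0r,C}(h)$ and hence factors through the finitely generated $\Rf$-module quotient $\Hc_\Rf/I_{W_0r}\Hc_\Rf$, where $I_{W_0r}\subset Z(\Hc_\Rf)$ is the ideal cutting out $W_0r$. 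Choosing $\Rf$-module generators and lifting them to $h_1,\dots,h_N\in\Hc_\Rf$, the first part of the proof gives that each $\Sigma^\Cf_{W_0r,C}(h_i)$ is regular at every $P\in\Cf\cap\overline{C}$, so the union of the (finite) polar divisors of the $\Sigma^\Cf_{W_0r,C}(h_i)$ is disjoint from $\Cf\cap\overline{C}$. Its complement $\Uf\subset\Cf$ is then a Zariski-open neighborhood of $\Cf\cap\overline{C}$ in which every $\Sigma^\Cf_{W_0r,C}(h)$, $h\in\Hc_\Rfu$, lies in $\Rfu$ by $\Rfu$-linearity in the representation $h\equiv\sum f_ih_i\pmod{I_{W_0r}}$.

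The main obstacle is the boundedness step of the second paragraph: one has to control, in a genuinely geometric way, the cancellation between the possibly singular prefactor $m_{W_0r}(\vb')^{-1}$ and the contour integrals whose representing cycles partially degenerate as residual cosets coalesce at $\vb_0$. This cancellation is not formal; it rests on the continuity of the local Plancherel data established in \cite[Theorem 3.4, Proposition 3.7]{OpdSol2}, which is where the real work lies.
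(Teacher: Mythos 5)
Your reduction of the regularity claim to a boundedness statement along $\Cf\cap C$ near $P=\vb_0$ is correct (a rational function on a smooth curve is regular at a point iff it is bounded nearby), and your second paragraph on the Zariski-open refinement via finite generation over the center, together with the central eigenvalue property $\Sigma^\Cf_{W_0r,C}(zh)=z(W_0r)\Sigma^\Cf_{W_0r,C}(h)$, matches the paper's argument.

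However, the boundedness step itself has a genuine gap. You try to deduce it from the continuity of the Plancherel data on $\cap_\chi\Qc^{reg}_{\textup{gcc}_\vb(\chi)}$ (invoking \cite[Theorem 3.4, Proposition 3.7]{OpdSol2} as in Corollary \ref{cor:cont}). But that continuity statement lives on the open regular locus $C$, whereas $\vb_0$ sits on its boundary $\partial C\subset\Qc^{sing}$; continuity on $C$ does not, by itself, produce a finite limit or even boundedness as $\vb'\to\vb_0\in\partial C$. You acknowledge that the cancellation between $m_{W_0r}(\vb')^{-1}$ and the degenerating cycles ``is where the real work lies,'' but you do not actually perform this work --- and in fact no such geometric cancellation argument is needed.

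The paper closes this gap with a short, entirely different observation: positivity. Since $\Sigma_{W_0r}(\cdot;\vb')=\sum_\chi c_{\chi,C}\chi_{\vb'}$ with $c_{\chi,C}\in\Qb_+$ is a \emph{positive} central functional on $\Hc_{\vb'}$ for $\vb'\in C$, and since its value at the identity $\Sigma_{W_0r}(T_e;\vb')=\sum_\chi c_{\chi,C}\deg(\chi)$ is a constant rational number $C$ on $\Cf\cap C$, the general estimate for positive functionals \cite[Corollary 2.17(i)]{Opd1} gives the uniform bound $|\Sigma^\Cf_{W_0r,C}(h;\vb')|\le C\,\Vert h\Vert_o$ for all $\vb'\in\Cf\cap C$. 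This bound is \emph{uniform} up to the boundary, which is exactly what the regularity assertion needs and what your continuity-based approach does not deliver. If you want to salvage your route, you would need a result extending the Plancherel continuity \emph{across} the walls, which is considerably harder than (and not implied by) the results you cite; the positivity argument sidesteps this entirely.
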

\begin{proof}
Since $\Sigma_{W_0r,C}^\Cf(h)$ is rational and $P$ is a smooth point of the curve $\Cf$, 
it is enough to show that $\vb\to \Sigma_{W_0r,C}^\Cf(h;\vb)$ is bounded in a neighborhood of $\vb_0=P\in \Cf$ 
for $\vb\in\Cf\cap C$. But $\Sigma_{W_0r}(\cdot;\vb)$ is a \emph{positive} functional on $\Hc_\vb$ for all $\vb\in C$, and if 
$\vb\in \Cf\cap C$ then 
$\Sigma_{W_0r}(T_e;\vb)=\sum_{\chi\in \textup{DS}_{W_0r,\vb}}c_{\chi,C}\textup{deg}(\chi)=C\in\Qb_+$ is constant.
By \cite[Corollary 2.17(i)]{Opd1} we see that $|\Sigma_{W_0r,C}^\Cf(h;\vb)|\leq C\Vert h\Vert_o$. This implies the regularity 
at $P$ of  the rational function $\vb\to \Sigma_{W_0r,C}^\Cf(h;\vb)$ for all $h\in\Hc_\Rf$. Note that for $z\in\Zc\subset\Hc_\Rf$ 
we have $\Sigma_{W_0r,C}^\Cf(zh;\vb)=z(W_0r(\vb))\Sigma_{W_0r,C}^\Cf(h;\vb)$, while 
$\vb\to z(W_0r(\vb))\in\Rf$ is regular on $\Cf$.
Since $\Hc_\Rf$ is finitely generated over $\Zc$ it follows that we can find an open neighborhood $\Uf$ of  $\Cf\cap \overline{C}$
on which  $\vb\to \Sigma_{W_0r,C}^\Cf(h;\vb)$ is regular for all $h\in\Hc_\Rf$.
\end{proof}
From now on we will assume that $\Cf\cap \overline{C}$ consists of smooth points of $\Cf$, and that $\Uf\subset \Cf$ is as in 
Lemma \ref{lem:cont}.
We have shown that $\Sigma_{W_0r,C}^\Cf\in \Hc_\Rfu^*:=\textup{Hom}_\Rfu(\Hc_\Rfu,\Rfu)$ is a central functional in the 
$\Hc_\Rfu\otimes_\Rfu\Hc_\Rfu^{op}$-module $\Hc_\Rfu^*$,  supported by the central  
character $W_0r$. 
\begin{defn}
$M_{W_0r,C}^\Uf\subset \Hc_\Rfu^*$ denotes the $\Hc_\Rfu\otimes\Hc_\Rfu^{op}$-submodule 
generated by $\Sigma_{W_0r,C}^\Cf$. 
\end{defn}
We will now show that a generic family $\chi$ of discrete series characters as discussed in \cite{OpdSol2} always 
admits  
an algebraic model when we restrict the parameter space to a curve $\Cf$ in $\Qc$. Moreover, we can find such 
models that are regular at the intersection of the curve with the boundary of the connected component 
$C\subset  \Qc_{gcc(\chi)}^{reg}$ (an open cone) on which $\chi$ lives. More precisely:  
\begin{prop}\label{prop:basicM}
\begin{enumerate}
\item[(i)] $M_{W_0r,C}^\Uf$ is  a $\Hc_\Rfu\otimes_\Rfu\Hc_\Rfu^{op}$-module.
The central subalgebra $\Zc_\Rfu\otimes_\Rfu\Zc_\Rfu$ 
acts via scalar multiplication via the character $(z_1\otimes z_2)\to (z_1z_2)(W_0r)\in\Rfu$. 
\item[(ii)] $M_{W_0r,C}^\Uf$ is a locally free $\Rfu$-module of finite rank.
\item[(iii)] $M_{W_0r,C}^\Uf$ has a canonical $\Hc_\Rfu$-algebra structure such that, if we put   
$e=m_{W_0r}\Sigma_{W_0r,C}^\Cf\in M_{W_0r,C}^\Uf\subset \Hc^*_\Rfu$, the map $\Hc_\Rfu\ni h\to he$
is an algebra homomorphism.
\item[(iv)] If $\vb\in\Uf\cap C$ then 
$$M_{W_0r,C}^\Uf\otimes \Cb_\vb\simeq \oplus_{\chi\in \textup{DS}_{W_0r,\vb}}\textup{End}_{\Cb}(V_{\chi_\vb})$$
where $V_{\chi_\vb}$ is a vector space on which the discrete series character $\chi_\vb$ is realized.
\item[(v)] We may assume without loss of generality that $\Uf$ is smooth. There exists a Zariski-open subset 
$\Ufst\subset \Uf$ with $\Cf\cap C\subset \Uf^*$ such that $M_{W_0r,C}^\Ufst:=M_{W_0r,C}^\Uf \otimes_\Rfu \Rfust$ 
is a locally free separable $\Rfust$-algebra of finite rank.
\item[(vi)] 
There exists a branched covering $\phi:\Uft\to\Uf$ such that the following holds true.  Let $\Lf\supset \Kf$ denote 
the function field of $\Uft$, and let $\Rfut:=\Rf(\Uft)$ be the integral closure of $\Rfu$ in $\Lf$. Put 
$\tilde{M}_{W_0r,C}^\Uf:=M_{W_0r,C}^\Uf\otimes\Rfut$.  
There exists a finite set of projective $\Rfut$-modules  of finite rank $\{L_i\}_{i\in\Ic}$, such that 
$$\tilde{M}_{W_0r,C}^\Uf\subset \tilde{M}_{W_0r,C}^{\Uf,max}\simeq \bigoplus_{i\in\Ic}\textup{End}_{\Rfut}(L_i)$$
is a $\Rfut$-order. This turns $L_i$ into a $\Hc_\Rfut$-module, for each $i\in\Ic$.
\item[(vii)] 
With $\Ufst$ as in (v), define $\Uftst:=\phi^{-1}(\Ufst)$ and let $\Rfutst\subset \Lf$ be the integral closure of $\Rfust$
(which is the localization of $\Rfust$ at $\Uftst$).  
Put $\tilde{M}_{W_0r,C}^\Ufst:=M_{W_0r,C}^\Ufst\otimes_\Rfust\Rfutst$. 
Then we have an isomorphism
$$\tilde{M}_{W_0r,C}^\Ufst = \tilde{M}_{W_0r,C}^{\Ufst,max}\simeq \bigoplus_{i\in\Ic}\textup{End}_{\Rfut}(L_i^*)$$
with $L_i^*:=L_i\otimes_{\Rfut}\Rfutst$. 
\item[(viii)] The covering $\phi$ (as in (vi)) is not branched at points of $\Cf\cap C$. 
Let $\vb\in \Cf\cap C$ and let $\tilde{\vb}$ be a closed point lying above $\vb\in \Cf\cap C$.  
There exists a unique bijection $i\to \chi_i$ from $\Ic$ onto $\textup{DS}_{W_0r,\vb}$ such that 
$L_{i,\tilde{\vb}}:=L_i\otimes \Cb_{\tilde{\vb}}\simeq V_{\chi_{i,\vb}}$ as representation of $\Hc_\vb$, 
via the canonical isomorphism $\Hc_\vb\simeq \Hc_{\Rfut,\tilde{\vb}}$.
\end{enumerate} 
\end{prop}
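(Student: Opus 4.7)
The plan is to establish the eight assertions roughly in the order stated, treating the proposition as a structural study of the bimodule $M:=M_{W_0r,C}^\Uf\subset\Hc_\Rfu^*$. Items (i) and (iii) are essentially formal. The $\Hc_\Rfu\otimes_\Rfu\Hc_\Rfu^{op}$-action on $\Hc_\Rfu^*$ is the standard bimodule structure, and the centrality of $\Sigma_{W_0r,C}^\Cf$ is inherited from the centrality of $\Sigma_{W_0r}(\,\cdot\,;\vb)$ at each $\vb\in\Cf\cap C$ (by Corollary \ref{cor:cont}); the central subalgebra then acts by the character $(z_1\otimes z_2)\mapsto(z_1z_2)(W_0r)$ since $\Sigma_{W_0r,C}^\Cf$ is supported on $W_0r$. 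For (iii), I would use that at $\vb\in\Cf\cap C$, equation (\ref{eq:dschar}) expresses $\Sigma_{W_0r}(\,\cdot\,;\vb)$ as a positive rational combination of irreducible discrete series characters with coefficients involving $m_{W_0r}(\vb)^{-1}$, so that $e:=m_{W_0r}\Sigma_{W_0r,C}^\Cf$ specializes to the sum of block identities of $\Hc_\vb$ supported by $W_0r(\vb)$. This forces $h\mapsto he$ to be an algebra homomorphism after verifying the identity on a Zariski-dense set of $\vb$.

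For (ii) and (iv): since $\Sigma_{W_0r,C}^\Cf$ is annihilated on both sides by the ideal $I_r\subset\Zc_\Rfu$ of the generic central character $W_0r$, and $\Hc_\Rfu/I_r\Hc_\Rfu$ is a finitely generated projective $\Rfu$-module, the bimodule $M$ is a finitely generated $\Rfu$-module. Torsion-freeness follows from the regular-rational values of $\Sigma_{W_0r,C}^\Cf$ established in Lemma \ref{lem:cont}; since $\Uf$ is one-dimensional and smooth, finitely generated plus torsion-free gives locally free, proving (ii). Item (iv) is then Jacobson density applied to the faithful action of $\Hc_\vb$ on $\bigoplus_{\chi\in\textup{DS}_{W_0r,\vb}}V_\chi$, together with the fact that left multiplication by $e\otimes\Cb_\vb$ is the identity on this sum and kills the remaining blocks of $\Hc_\vb$, so the image of $\Hc_\vb\to M\otimes\Cb_\vb$ is the full product of matrix algebras.

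For (v), item (iv) implies that $M\otimes\Cb_\vb$ is semisimple of constant dimension for $\vb\in\Cf\cap C$. Since the non-semisimple locus is closed (cut out by the vanishing of the discriminant of the reduced trace form), there is a Zariski open $\Ufst\supset\Cf\cap C$ on which $M_{W_0r,C}^\Ufst$ is a locally free separable $\Rfust$-algebra. For (vi), I would decompose $M\otimes_\Rfu\Kf$ into its central simple $\Kf$-algebra factors, choose a finite Galois extension $\Lf/\Kf$ splitting all of them simultaneously, and take $\Rfut$ to be the integral closure of $\Rfu$ in $\Lf$, with associated branched covering $\phi:\Uft\to\Uf$. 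Wedderburn theory, together with the fact that maximal orders in a product of matrix algebras over a Dedekind domain are endomorphism rings of finitely generated projective modules, then produces the required $L_i$ with $\tilde M\subset\bigoplus_{i\in\Ic}\textup{End}_\Rfut(L_i)$. Item (vii) is immediate since over the separable locus $\Ufst$ the embedding $\tilde M^\Ufst\subset\bigoplus_i\textup{End}_\Rfutst(L_i^*)$ is already an equality of maximal orders.

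For (viii), the key point is that at each $\vb\in\Cf\cap C$ the center of $M\otimes\Cb_\vb$ is already split as a product of copies of $\Cb$ indexed by $\textup{DS}_{W_0r,\vb}$; Hensel's lemma applied to the primitive idempotents decomposing the center of $M^\Ufst$ lifts this splitting to an étale neighborhood of $\vb$, so $\phi$ is unramified over $\Cf\cap C$. Matching blocks of $M\otimes\Cb_\vb$ (which are $\textup{End}(V_{\chi_{i,\vb}})$ by (iv)) with the specializations of $\textup{End}_\Rfut(L_i)$ then produces the unique bijection $i\mapsto\chi_i$ with $L_{i,\tilde\vb}\simeq V_{\chi_{i,\vb}}$. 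The main obstacle is the global compatibility underlying (vi)--(viii): building a single branched cover $\phi$ and projective modules $L_i$ so that separability holds on $\Ufst$, unramifiedness holds on $\Cf\cap C$, and the specialization recovers the discrete series exactly. The difficulty is not the mere existence of a splitting field (standard descent), but harmonizing this construction with the boundary regularity of $\Sigma_{W_0r,C}^\Cf$ provided by Lemma \ref{lem:cont}, where the discrete series may degenerate but the trace functional does not.
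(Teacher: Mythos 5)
Your proposal gets the broad architecture right and in several places genuinely parallels the paper (the finite generation and torsion-freeness argument for (ii), the reduction of (iii) to centrality of $e$, the discriminant/non-degenerate trace form route to (v), and the use of Dedekind-domain maximal order theory for (vi)). There are, however, two real gaps and one stylistic divergence worth noting.

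The critical gap is in (iv). You write that ``left multiplication by $e\otimes\Cb_\vb$ is the identity on this sum and kills the remaining blocks of $\Hc_\vb$'' and then appeal to Jacobson density, but that statement is precisely the nontrivial input and cannot be taken for granted. The paper obtains it from \cite[Corollary 5.8]{DO}: the normalized functional $e_\vb=m_{W_0r}(\vb)\,\Sigma_{W_0r}(\cdot;\vb)$ is a \emph{central idempotent of the Schwartz algebra $\Sc_\vb$} which projects exactly onto the discrete series block with central character $W_0r(\vb)$. Without this (or an equivalent argument establishing that the coefficients $c_{\chi,C}m_{W_0r}(\vb)$ equal the formal degrees $\textup{fdeg}(\chi)$), the Jacobson density argument tells you only that the image of $\Hc_\vb$ in $\prod\textup{End}(V_\chi)$ is dense, not that the kernel of $h\mapsto he_\vb$ is the annihilator of $\bigoplus V_\chi$ and nothing more. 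Your proof of (iii) implicitly relies on the same unproved fact.

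A second, smaller gap is in (vii): you call the equality $\tilde M^{\Ufst}=\bigoplus_i\textup{End}_{\Rfutst}(L_i^*)$ ``immediate'', but the paper actually proves it by showing $\tilde M^{\Ufst}$ is self-dual with respect to the trace form (using separability from (v)) and invoking the characterization of maximal orders as self-dual orders, as in the proof of \cite[Corollary IV.4.5]{A}. One needs to argue maximality; it is not automatic from the inclusion.

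Finally, in (vi) you pass to a Galois splitting field and omit the invocation of Tsen's theorem, whereas the paper uses Tsen (valid because $\Kf$ is the function field of a curve over $\Cb$, hence $C_1$) to conclude that $M^\Kf$ is \emph{already} a direct sum of matrix algebras $\textup{Mat}_{d_i}(\Kf_i)$, and then takes $\Lf$ to be the compositum of the $\Kf_i$. Both routes are correct, but the paper's is tighter and makes the later specialization arguments cleaner. Your Hensel-lifting argument in (viii) is a legitimate alternative to the paper's counting argument from \cite[Corollary 5.11]{OpdSol2}, though it would need to be spelled out that the center of $M^{\Ufst}$ is \'etale over $\Rfust$ so that the idempotents do lift.
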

\begin{proof}
Assertion (i): this is immediate from the definition. 

Assertion (ii): Since $\Sigma_{W_0r,C}^\Cf$ is central and is a $\Zc^\Uf$-eigenfunction with eigenvalue 
$W_0r$, it is clear that $M_{W_0r,C}^\Uf$ is finitely generated over $\Rfu$. It is also obviously torsion free over 
$\Rfu$, implying that the localization $M_{W_0r,C,Q}^\Uf$ is free over $\Rf_Q$ for all $Q\in\Uf$ (since the local rings  are 
principal ideal domains). Since $M_{W_0r,C}^\Uf$ is finitely generated, this implies that it is a locally free 
module.
 
Assertions (iii): We need to verify that the kernel of the surjective $\Hc_\Rfu$-module homomorphism 
$\Hc_\Rfu\ni h\to he\in M_{W_0r,C}^\Rfu$ is a two-sided ideal. This is a trivial consequence of the centrality of $e$.
This turns $M_{W_0r,C}^\Rfu$ into a $\Hc_\Rfu$-algebra.

Assertions (iv) and (v): 
We have already established that $M_{W_0r,C}^\Rfu$ is a locally free 
$\Hc_\Rfu$-algebra of finite rank.
When we specialize at $\vb\in C\cap \Uf$, the may use the fact \cite[Corollary 5.8]{DO} that 
$e_\vb\in \Sc_\vb\subset \Hc_\vb^*$ is a central idempotent of the Schwartz algebra $\Sc_\vb$ to see 
that the specialization $M_{W_0r,C,\vb}=M_{W_0r,C}^\Uf\otimes \Cb_\vb$ equals the 
finite dimensional semisimple algebra as stated in (iv).
Hence the trace form of $M_{W_0r,C}^\Uf$ is non degenerate over $\Kf$, and by shrinking  
$\Ufst$ sufficiently we may assume that the discriminant of the trace form of 
$M_{W_0r,C}^\Uf$ is non vanishing on $\Uf^*$. 
We arrive at the conclusion of (v) (see e.g. \cite[Section IV.3]{A}).

Assertion (vi): By (v) it follows that  $M_{W_0r,C}^\Kf:=M_{W_0r,C}^\Uf\otimes_\Rfu \Kf$ 
is a finite separable $\Kf$-algebra. Hence $M_{W_0r,C}^\Kf$ 
isomorphic to a finite direct sum of central simple algebras over finite separable field extensions $\Kf_i$ of $\Kf$
 (cf. \cite[Section IV.3]{A}). 
Tsen's Theorem \cite[Section III.15]{A} implies that in fact $M_{W_0r,C}^\Kf$ is 
a direct sum of full matrix algebras $\textup{Mat}_{d_i\times d_i}(\Kf_i)$.  Consider the compositum $\Lf$ of 
the $\Kf_i$. It is easy to see that for all $i$, the tensor product $\Kf_i\otimes_\Kf \Lf$ is isomorphic to a finite direct sum 
of copies of $\Lf$. It follows that $\Lf$ is a finite separable extension of $\Kf$ such that 
$M_{W_0r,C}^\Lf:=M_{W_0r,C}^\Uf\otimes_\Rfu \Lf=M_{W_0r,C}^\Kf\otimes_\Kf \Lf$ is isomorphic to a finite direct sum 
of full matrix algebras $M_{W_0r,C,i}^\Lf\simeq \textup{End}_\Lf(\Lf^{d_i})$  over $\Lf$.
Let $\Pi_i$ denote the projection homomorphism $M_{W_0r,C}^\Lf\to M_{W_0r,C,i}^\Lf$.
Let $\Rfut$ be the integral closure of $\Rfu$ in $\Lf$, which is a Dedekind domain
with fraction field $\Lf$. The inclusion $\Rfu\subset \Rfut$ gives rise to a ramified covering $\phi:\Uft\to\Uf$. We may 
assume, by shrinking $\Ufst$ (as in (v)) if necessary, that $\phi:\Uftst:=\phi^{-1}(\Ufst)\to\Ufst$ is finite \'etale.  

Now $\tilde{M}_{W_0r,C}^\Uf:=M_{W_0r,C}^\Uf\otimes_\Rfu\Rfut\subset M_{W_0r,C}^\Lf$ is clearly an $\Rfut$-order in 
$M_{W_0r,C}^\Lf$. Hence the projection $\Pi_i(\tilde{M}_{W_0r,C}^\Uf)$ is an $\Rfut$-order in $M_{W_0r,C,i}^\Lf\simeq 
\textup{End}_\Lf(\Lf^{d_i})$. By \cite[Theorem IV.4.1,Proposition IV.4.7]{A} this is contained in a maximal $\Rfut$-order 
in $M_{W_0r,C,i}^\Lf$ which must be a trivial Azumaya algebra of the form 
$\textup{End}_\Rfut(L_i)$
where $L_i\subset \Lf^{d_i}$ denotes an $\Rfut$-lattice. Hence we have 
$$
\tilde{M}_{W_0r,C}^\Uf\subset \bigoplus_i \Pi_i(\tilde{M}_{W_0r,C}^\Uf)\subset \bigoplus_i \textup{End}_\Rfut(L_i)=:\tilde{M}_{W_0r,C}^{\Uf,max}
$$
This turns $L_i$ into a $\Hc_\Rfut$-module, and proves (vi).

Assertions (vii) and (viii):
Localizing at $\Uftst$ we obtain $\tilde{M}_{W_0r,C}^\Ufst\subset \bigoplus_i \textup{End}_\Rfut(L_i^*)$, where 
$L_i^*:=L_i\otimes_\Rfut \Rfutst$ is a $\Rfutst$-lattice in $\Lf^{d_i}$. Since $\tilde{M}_{W_0r,C}^\Ufst$ is separable by (v), we have 
$\tilde{M}_{W_0r,C}^\Ufst=(\tilde{M}_{W_0r,C}^\Ufst)^*:=\textup{Hom}_{\Rfutst}(\tilde{M}_{W_0r,C}^\Ufst,\Rfutst)$. As in the proof of 
\cite[Corollary IV.4.5]{A} this implies that $\tilde{M}_{W_0r,C}^\Ufst$ is a maximal $\Rfutst$-order, hence we actually have:
$\tilde{M}_{W_0r,C}^\Ufst=\bigoplus_i \textup{End}_\Rfut(L_i^*)$, proving (vii). 
When we take $\tilde{\vb}$ lying above a $\vb\in \Ufst\cap C$, then specializing at $\tilde{\vb}$ and comparing (iv) and (vii) we obtain (viii). 
Observe that the structural result (vi) implies that $\phi$ can not be ramified at $\vb$. Indeed, when assuming the contrary then 
(vi) would imply that 
the number of discrete series characters of $\Hc_\vb$ with generic central 
character $W_0r$ would be strictly smaller than the number of such discrete series characters of $\Hc_{\vb'}$ with $\vb'\in C$ in a small 
neighborhood of $\vb$. This contradicts \cite[Corollary 5.11]{OpdSol2}. 

   
\end{proof}
\begin{prop}\label{prop:limlim} Let $b\in\Bc_{gm}$. 
The family $\Qc^{reg}_b\ni\vb\to \ep(b;\vb)\textup{Ind}_D(b;\vb)$ is continuous in the sense of \cite[Section 3]{OpdSol2}.
\end{prop}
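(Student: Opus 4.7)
The plan is to reduce continuity of $\vb\to \ep(b;\vb)\textup{Ind}_D(b;\vb)$ on $\Qc_b^{reg}$ to the local rational structure of one-parameter families provided by Proposition \ref{prop:basicM}, combined with the fact that the scaling limit map $\lim_{\vb\to 1}$ takes values in the discrete lattice $\overline{\Rc}_\Zb(W)$. Since connected components of $\Qc_b^{reg}$ are open in $\Qc$, it suffices to check continuity on each connected component, and this in turn may be verified along affine smooth real algebraic curves $\Cf\subset \Qc$ passing through any given point.

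Fix a connected component $C\subset \Qc_b^{reg}$ and an affine smooth curve $\Cf$ as in Section \ref{sec:fan}, with $\Cf\cap\overline{C}$ contained in the smooth locus of $\Cf$. Applying Proposition \ref{prop:basicM} to the orbit of generic residual points $W_0r_b$ and the component $C$ yields a finite set $\Ic$, a finite \'etale covering $\phi:\Uftst\to \Ufst$ of a Zariski-open neighborhood of $\Cf\cap C$, and projective $\Rfutst$-modules $\{L_i\}_{i\in\Ic}$ whose specializations at closed points $\tilde{\vb}$ above $\vb\in\Cf\cap C$ realize bijectively (as $i$ varies) the irreducible discrete series characters $\chi_{i,\vb}$ of $\Hc_\vb$ with generic central character $W_0r_b$, by (vii)--(viii). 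In particular, for each $i\in\Ic$ and $h\in \Hc_{\Rfutst}$ the character value $\tilde{\vb}\to \chi_{i,\tilde{\vb}}(h)$ is regular on $\phi^{-1}(\Cf\cap C)$; since $\phi$ is unramified above $\Cf\cap C$, choosing a local sheet yields a continuous function $\vb'\to \chi_{i,\vb'}(h)$ on a neighborhood of $\vb$ in $\Cf\cap C$.

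The crux is to identify which index $i\in\Ic$ represents $\ep(b;\vb)\textup{Ind}_D(b;\vb)$ and to show this index is locally constant on $\Cf\cap C$. By Corollary \ref{cor:lim} one has $\lim_{\ep\to 0}\overline{\chi_{i,\vb^\ep}}=\textup{Ind}_s([U_{i,\vb}|_{W_s}])$ where $U_{i,\vb}$ is the finite dimensional module of $\Hb(R_{s,1},T_s(T),F_{s,1};k_s)\rtimes\Gamma_s$ corresponding to $\chi_{i,\vb}$ via (\ref{eq:red}). The family $\vb\to U_{i,\vb}$ is algebraic (pullback of $L_i$ under (\ref{eq:red}) and the underlying Morita equivalence), so its $W_s$-character depends continuously on $\vb$, and its multiplicities $m_\sigma(\vb)=\langle\textup{tr}(\cdot,U_{i,\vb}),\chi_\sigma\rangle_{W_s}$ on irreducibles $\sigma$ of $W_s$ are nonnegative integers, hence locally constant in $\vb\in\Cf\cap C$. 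Consequently $[U_{i,\vb}|_{W_s}]\in \overline{\Rc}_\Zb(W_s)$, and hence $\lim_{\ep\to 0}\overline{\chi_{i,\vb^\ep}}=\textup{Ind}_s([U_{i,\vb}|_{W_s}])$, is locally constant on $\Cf\cap C$.

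Fix a connected component $\Cf_0$ of $\Cf\cap C$. By Definition \ref{defn:indD} there is a unique $i=i(b)\in\Ic$ with $\lim_{\ep\to 0}\overline{\chi_{i(b),\vb^\ep}}=\ep(b;\vb)b$ for some $\ep(b;\vb)\in\{\pm 1\}$, and both the index $i(b)$ and the sign $\ep(b;\vb)$ are constant on $\Cf_0$. Then $\ep(b;\vb)\textup{Ind}_D(b;\vb)=\chi_{i(b),\vb}$ on $\Cf_0$, and the continuous dependence of $\chi_{i(b),\vb}$ on $\vb$ established in the second paragraph gives the claim. The principal obstacle is controlling the scaling limit uniformly in the algebraic family, which is exactly the content of the third paragraph: one must combine the Clifford-theoretic description of $\lim_{\vb\to 1}$ (Corollary \ref{cor:lim}) with the rigidity of integer multiplicities in a continuous family of $W_s$-modules to conclude that the scaling limit is locally constant.
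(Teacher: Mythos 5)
Your proof follows the same overall strategy as the paper's: reduce continuity to curves $\Cf\cap C$, realize the discrete series family algebraically via Proposition \ref{prop:basicM}, transfer to the level of $W_s$ via Corollary \ref{cor:lim}, and show that the scaling limit is locally constant so that the correct index $i(b)$ and sign $\ep(b;\vb)$ are constant on $\Cf\cap C$. The one substantive step you leave unjustified is the assertion that the pullback family $\vb\to U_{i,\vb}$ of graded affine Hecke algebra modules obtained from $L_i$ via (\ref{eq:red}) "and the underlying Morita equivalence" is algebraic, so that its $W_s$-character varies continuously. This is exactly the technical crux, and the paper avoids having to verify that the Morita equivalence behaves algebraically by a different device: starting from $L$ it constructs $L_s = e_s L$ directly as a locally free $\Rfutst$-module over the finite Hecke algebra $\Hc_{s,\Rfutst,0}=\Hc_\Rfutst(W(R_{1,s}))\rtimes\Gamma_s$, using the orthogonal idempotents $e_{s'}\in\widehat{(\Hc_{\Rfutst})}_{W_0r}$ and Lusztig's isomorphism $\Hc_{s,\Rfutst}\simeq e_s\widehat{(\Hc_{\Rfutst})}_{W_0r}e_s$ (Lusztig, Lemmas 8.14--8.15). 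Your second step --- that the $W_s$-multiplicities $m_\sigma(\vb)$ are nonnegative integers depending continuously on $\vb$, hence locally constant --- is an acceptable replacement for the paper's argument that irreducible characters of $\Hc_{s,\Rfutst,0}$ take values in $\Lambda$ (\cite[Theorem 5]{Opd0}); both arguments cash out the required rigidity once the family is known to be algebraic, but yours presupposes the continuity of the $W_s$-character, which is precisely the point that needs the Lusztig idempotent construction. You also do not invoke the a priori existence of the unique continuous family $\chi\in\Delta^{gen}_{W_0r_b}(\Rc)$ from \cite[Definition 5.10, Corollary 5.8]{OpdSol2}, which the paper uses at the outset to reduce the claim to showing $\lim_{\ep\to 0}[\chi_{\vb^\ep}]=b$ on $I_\Cf$; working entirely with the model of Proposition \ref{prop:basicM} is legitimate but then you should say a word about why the family you produce is continuous in the precise sense of \cite[Section 3]{OpdSol2} (formal degrees and character values vary continuously), which follows from the regularity established in Lemma \ref{lem:cont} and Proposition \ref{prop:basicM}.
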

\begin{proof}
The discrete series character $\ep(b;\vb)\textup{Ind}_D(b;\vb)$ of $\Hc_\vb$ has 
generic central character $gcc_\vb(\pi_\vb)=W_0r$ with $r=r_b$ by Corollary \ref{cor:gVcc}.     
By \cite[Definition 5.10]{OpdSol2} 
we know therefore that $\ep(b;\vb)\textup{Ind}_D(b;\vb)$ is also the specialization at $\vb$  
of a unique continuous family of generic irreducible discrete series 
characters $\chi\in\Delta_{W_0r_b}^{gen}(\Rc)$ with generic central character $W_0r_b$ 
on $\Qc_b^{gen}$.  By \cite[Corollary 5.8]{OpdSol2} $\Delta_{W_0r_b}(\Rc)$, is a constant sheaf 
with finite fiber. 
Let $C$ be a connected component of $\Qc^{reg}_b$, and let $\Cf$ be any 
irreducible real curve in $\Qc$ which intersects with $C$. Let $I_\Cf\subset \Cf\cap C$
be connected. By the above it is enough to show that $\ep(b;\vb)\textup{Ind}_D(b;\vb)$
and $\chi_\vb$ are equivalent when $\vb\in I_\Cf$.
In other words, it is enough to show that  
for all  $\vb\in I_\Cf$ we have $\lim_{\ep\to 0}[\chi_{\vb^\ep}]=b$. 
By Proposition \ref{prop:basicM}, the restriction of $\chi$ to $\Cf\cap C$ is 
realized by an algebraic model $L$ defined over some Zariski-open set $\Uft$
of a ramified cover $\tilde\Cf$ of $\Cf$, such that $I_\Cf\subset \Uf$ and there is no 
ramification at the points of $I_\Cf$. Choose a lift $\tilde I_\Cf$ of $I_\Cf$. 

By Corollary \ref{cor:gVcc}
we know that $b=\textup{Ind}_s(b_s)$, where $gcc_B(b)=W_0r$ for some generic residual point $r=sc$ 
with $s\in T_u$ a fixed special point, and $c=\exp(\xi)$ with $\xi$ a generic linear residual point for 
$\Hb(R_{s,1},T_s(E),F_{s,1};k_s)$. 
By Proposition \ref{prop:basicM} we know that $\ep(b,C)\textup{Ind}_D(b,C;\vb)$ is realized by an algebraic 
family $L$ defined over $\Rfutst$. By shrinking $\Ufst$ if necessary, we may assume that for all 
$\tilde\vb\in \Uftst$ we have, for all $r,r'\in W_0r$, that $sW_sc\not=s'W_{s'}c'$ (with $r'=s'c'$) if and only if 
$sW_sc(\vb)\cap s'W_{s'}c'(\vb)=\emptyset$ with $\vb=\phi(\tilde\vb)$.
Let $m_s$ be the ideal in $\Rfutst[X]$ corresponding to the finite set $sW_sc$ of $\Rfutst$-points
of $T$. By our choice of $\Rfutst$, we have that $m_s+m_s'=\Rfutst[X]$ whenever $sW_sc\not=s'W_{s'}c'$.
Hence by the Chinese remainder theorem we have 
$\widehat{\Rfutst[X]}_{W_0r}\simeq \oplus_{s'\in W_0s} \widehat{\Rfutst[X]}_{s'W_{s'}c}$.
Let $1=\sum_{s'\in W_0s}e_{s'}$ be the corresponding decomposition in orthogonal idempotents of 
$1\in \widehat{\Rfutst[X]}_{W_0r}$. Let $\Hc_{\Rfutst}=\Hc_\Lambda\otimes_\Lambda\Rfutst$.
Since $\Hc_\Rfutst$ contains the Bernstein subalgebra $\Ac_\Rfust\simeq \Rfust[X]$, we 
can define the idempotents $e_s\in\widehat{(\Hc_{\Rfutst})}_{W_0r}$. Since 
$gcc(L_b)=W_0r$, we can consider $L$ as module over  $\widehat{(\Hc_{\Rfutst})}_{W_0r}$.
Hence the $e_s$ act on $L$, and are mapped to idempotent elements in 
$\textup{End}_{\Rfutst}(L)$ such that $\textup{Id}=\sum_{s'\in W_0s}e_{s'}$. 

Following Lusztig \cite[Lemma 8.14, Lemma 8.15]{Lus2} we know that 
$\Hc_{s,\Rfutst}:=\widehat{\Hc_\Rfutst(\Rc_s)}_{W_sc}\rtimes \Gamma_s$
is isomorphic to the 
algebra 
$\widehat{(\Hc_{\Rfutst})}_{s,W_0r} :=e_s\widehat{(\Hc_{\Rfutst})}_{W_0r} e_s$, 
via the map (in the notations loc. cit.) $f.T_{w,s}.\gamma\to f.e_s.T_w.e_s.T_\gamma.e_s$.
Hence the finite, 
locally free $\Rfust$ module $L_s:=e_sL$ is a module over 
$\Hc_{s,\Rfutst}$, and in particular over the finite 
type Hecke subalgebra $\Hc_{s,\Rfutst,0}:=\Hc_\Rfutst(W(R_{1,s}))\rtimes \Gamma_s$
in the obvious way. 
By shrinking $\Rfutst$ if necessary we may assume that $L_s$ is actually 
a free $\Rfutst$-module. 

Let $\chi_s$ denote the character of $L_s$ (consided as a function of $\vb\in I_\Cf$).
By Corollary \ref{cor:lim} we see that for each $\vb\in I_\Cf$ we have 
$\lim_{\ep\to 0}[\chi_{\vb^\ep}]=\textup{Ind}_s(b_{\vb,s})$, where 
$b_{\vb,s}=\lim_{\ep\to 0}[\chi_{s,\vb^\ep}]$. This limit clearly only depends on 
the restriction of $L_s$ to $\Hc_{s,\Rfutst,0}$. 
Now it is well known 
that (see e.g. \cite[Theorem 5]{Opd0}) 
\footnote{For a discussion of the rationality properties of characters for finite Hecke algebras, see also \cite[section 9.3]{GP}, and the references therein.}
the irreducible characters 
of $\Hc_{s,\Rfutst,0}$ take values in $\Lambda$. 
This implies that $b_{\vb,s}$ is independent of $\vb \in I_\Cf$, as desired.
\end{proof}
\begin{cor}\label{c:loc-const}
$\ep(b,\cdot)$ is locally constant on $\Qc^{reg}_b$.
\end{cor}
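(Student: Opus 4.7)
The plan is to deduce the statement from Proposition \ref{prop:limlim}. Since the target $\{\pm 1\}$ is discrete, it suffices to show that $\epsilon(b,\cdot)$ is continuous on $\Qc^{reg}_b$; the strategy is to realize $\epsilon(b;\vb)$ as the pointwise ratio of two continuous families of virtual characters of $\Hc_\vb$, namely the numerator $\chi_\vb:=\epsilon(b;\vb)\textup{Ind}_D(b;\vb)$ and the denominator $\textup{Ind}_D(b;\vb)$ itself.

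The continuity of the numerator is precisely the content of Proposition \ref{prop:limlim}: the family $\vb\to\chi_\vb$ is continuous on $\Qc^{reg}_b$ in the sense of \cite[Section 3]{OpdSol2}. Concretely, as shown in the proof of that proposition, on any smooth real curve $\Cf\subset\Qc$ meeting a connected component $C\subset\Qc^{reg}_b$, the family $\chi$ is realized by the algebraic $\Hc_{\Rfutst}$-modules $L_i$ of Proposition \ref{prop:basicM} on a branched cover $\tilde\Cf\to\Cf$ that is unramified above $\Cf\cap C$, from which continuity of the character values on $\Cf\cap C$ is immediate.

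The denominator is also continuous on $\Qc^{reg}_b$, which is the main technical point. This follows from the construction recalled in Section \ref{subsec:dirac}: via the reduction isomorphism (\ref{eq:red}) and the attendant Morita equivalence, $\textup{Ind}_D(b;\vb)$ corresponds to $\textup{Ind}_D^{an}(b_s;k_s)\otimes M$, where the analytic Dirac induction of \cite{COT} is defined as the index of a Dirac-type operator depending algebraically on the graded parameter $k_s$, which is itself a linear function of $\vb$. On the regular locus $\Qc^{reg}_b$, where the condition that the residual central character persists is satisfied by definition, this index varies rigidly, producing an algebraic, hence continuous, family of virtual characters. Since both factors are continuous and $\textup{Ind}_D(b;\vb)\neq 0$ for every $\vb\in\Qc^{reg}_b$, the ratio $\epsilon(b;\vb)\in\{\pm 1\}$ is a continuous $\{\pm 1\}$-valued function on $\Qc^{reg}_b$, and therefore locally constant. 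The main obstacle is the independent continuity of $\textup{Ind}_D(b;\vb)$, which reduces to the algebraicity of the analytic Dirac induction of \cite{COT} in families of graded parameters.
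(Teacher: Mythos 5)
Your plan to deduce the local constancy of $\ep(b;\cdot)$ from Proposition \ref{prop:limlim} is the right instinct, and identifying the independent continuity of the denominator $\textup{Ind}_D(b;\vb)$ as the ``main obstacle'' is also correct. However, the way you resolve that obstacle does not work, and the argument as written is circular.

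The issue is this: given Proposition \ref{prop:limlim} (continuity of the family $\vb\mapsto\ep(b;\vb)\textup{Ind}_D(b;\vb)$), the assertion that $\vb\mapsto\textup{Ind}_D(b;\vb)$ is \emph{also} continuous on $\Qc^{reg}_b$ is logically \emph{equivalent} to the assertion that $\ep(b;\cdot)$ is locally constant. Indeed the two virtual characters differ exactly by the factor $\ep(b;\vb)$, so if both are continuous then the sign factor must be locally constant, and conversely. So by taking the continuity of $\textup{Ind}_D(b;\vb)$ as a given input, you are assuming the conclusion. To break the circle you would need an independent argument, and the one you offer --- that $\textup{Ind}_D^{an}(b_s;k_s)$ is ``the index of a Dirac-type operator depending algebraically on $k_s$'' and that this index ``varies rigidly, producing an algebraic, hence continuous, family'' --- is not correct as stated, nor is it available in \cite{COT}. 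The Dirac operator of \cite{COT} acts on $X\otimes S$ where $X$ is the discrete series module itself; the underlying vector space $X$ changes with $k_s$, so there is no fixed elliptic family to which a rigidity theorem for the index would apply. Moreover, the analytic Dirac induction is, as the name indicates, an analytic construction via the Plancherel formula and limits; the question of whether the associated families of characters are algebraic/rational in the parameters is precisely one of the nontrivial results being established in this section (Propositions \ref{prop:basicM} and \ref{prop:limlim}), not an input to it.

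What the paper actually does is quite different. It does not establish the continuity of the denominator separately; instead it works with the \emph{scaling limit}. The proof of Proposition \ref{prop:limlim} shows that for the continuous (OS2) family $\chi$ agreeing with $\ep(b;\vb)\textup{Ind}_D(b;\vb)$ at a base point, the scaling limit $\lim_{\ep\to 0}[\chi_{\vb^\ep}]$ is \emph{constant} along connected curve segments $I_\Cf\subset C$. This is nontrivial; the key input is the realization of the family by a free algebraic module $L_s$ over a cover of a curve and the fact (\cite[Theorem 5]{Opd0}, \cite[Section 9.3]{GP}) that the irreducible characters of the finite-type Hecke subalgebra $\Hc_{s,\Rfutst,0}$ take values in $\Lambda$, so the restriction $b_{\vb,s}=\lim_{\ep\to 0}[\chi_{s,\vb^\ep}]$ is independent of $\vb\in I_\Cf$. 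Since the scaling limit of $\ep(b;\vb)\textup{Ind}_D(b;\vb)$ is $\ep(b;\vb)\cdot b$ (the scaling limit of $\textup{Ind}_D(b;\vb)$ being $b$ by definition), constancy of the scaling limit forces constancy of $\ep(b;\vb)$ on each connected component of $\Qc_b^{reg}$. So the paper's Corollary is really a byproduct of the \emph{proof} of Proposition \ref{prop:limlim}, specifically of the rationality argument for scaling limits, rather than of its statement alone; your appeal to the statement plus an unestablished independent continuity of $\textup{Ind}_D$ does not close the gap.
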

\begin{proof}
The continuity of $\Qc_b^{reg}\ni\vb\to\ep(b;\vb)\textup{Ind}_D(b;\vb)$ implies that 
on each connected component $C$ of $\Qc_b^{reg}$ the sign $C\ni\vb\to\ep(b,\vb)$ is 
continuous, hence constant.
\end{proof}
\begin{defn} If $C\subset \Qc_b^{reg}$ is a connected component,
we define $\ep(b,C):=\ep(b,\vb)\in\{\pm 1\}$ for any choice of $\vb\in C$. 
\end{defn}
\subsection{Limits of discrete series}
Let $b\in\Bc_{gm}$  let $C\subset \Qc^{reg}_b$ be a connected component, and  
let $\vb_0\in\partial{C}$. 
Choose a connected real algebraic curve $\Cf_r=\Cf\cap\Qc\subset\Qc$ which meets $C$, and 
contains $\vb_0\in\Cf_r$ as a smooth point of $\Cf_r$.   
Extend $\Cf_r$ to a complex affine algebraic curve $\Cf\subset \Qc_\mathbb{C}$.
Assume we have chosen the structures as in Proposition \ref{prop:basicM} with 
respect to $W_0r=gcc_B(b)$, $\vb_0$, $C$ and $\Cf$. Let $\vb$ be a point in $\Cf\cap C$
such that $\vb_0$ and $\vb$ are in the same connected component of $\Cf\cap\overline{C}$. 
Given a $\tilde{\vb}\in \Uftst$
above $\vb\in\Cf\cap C$, let us denote by $L_b$ the unique $\Rfut$-lattice 
 as in Proposition \ref{prop:basicM}(vi) such that $L_{i,\tilde{\vb}}\simeq V_{\chi_{b,\vb}}$
 (according to Proposition \ref{prop:basicM}(viii)), where 
 $\chi_{b,\vb}=\ep(b;\vb)\textup{Ind}_D(b;\vb)$. The interval $I_\Cf\subset \Cf_r$ connecting 
 $\vb$ to $\vb_0$ has a unique lift $\tilde{I}_\Cf$ starting at $\tilde{\vb}\in\Uftst$. Let 
 $\tilde{\vb}_0\in\Uft$ be the unique endpoint of this lifted interval lying above $\vb_0$.  
\begin{defn}
We denote by $\textup{Ind}_D(b,C;\vb_0)$ the unique virtual character 
of $\Hc_{\vb_0}$ such that $\ep(b,C)\textup{Ind}_D(b,C;\vb_0)$ is realized 
by $L_{b,\tilde{\vb}_0}$. We call this representation the \emph{limit of the family of 
discrete series $\chi_b=\ep(b;\vb)\textup{Ind}_D(b;\vb)$ along $\Cf$ from $C$ at $\vb_0$}.
\end{defn}
\begin{rem}
A priori $\textup{Ind}_D(b,C;\vb_0)$ may depend on the chosen curve
and the model $L_b$ 
of  $\ep(b;\vb)\textup{Ind}_D(b)$ defined over $\Rfut$, 
but we suppress this from the notation. 
\end{rem}
\begin{cor}
$\ep(b,C)\textup{Ind}_D(b,C;\vb_0)$ is a genuine tempered character.
\end{cor}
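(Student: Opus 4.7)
The plan has two parts: first, showing that $\ep(b,C)\textup{Ind}_D(b,C;\vb_0)$ is an honest (non-virtual) character of $\Hc_{\vb_0}$, and second, that this character is tempered.

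Genuineness is essentially built into the construction. By Proposition \ref{prop:basicM}(vi), $L_b$ is an honest finitely generated projective $\Rfut$-module carrying a $\Hc_\Rfut$-module structure, so its fiber $L_{b,\tilde{\vb}_0}$ at the closed point $\tilde{\vb}_0\in\Uft$ is a genuine finite dimensional module over $\Hc_{\Rfut,\tilde{\vb}_0}\simeq\Hc_{\vb_0}$. By the definition immediately preceding the corollary, the character of $L_{b,\tilde{\vb}_0}$ is exactly $\ep(b,C)\textup{Ind}_D(b,C;\vb_0)$, so this is genuine.

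For temperedness I would invoke the Casselman-type criterion for $\Hc_{\vb_0}$: a finite dimensional module is tempered iff every $\Ac$-weight $t\in T$ that appears satisfies $|x(t)|\leq 1$ for all dominant $x\in X^+$, while the discrete series condition is the corresponding strict inequality on a suitable open cone. The Bernstein subalgebra $\Ac$ is defined over $\Lambda$ and so acts $\Rfut$-linearly on $L_b$, and the $\Ac$-weights of the family are roots of the characteristic polynomials of the operators $\theta_x|_{L_b}$, which are monic polynomials with coefficients in $\Rfut$. After passing, if necessary, to a further finite branched cover of $\tilde\Cf$ (which I may absorb into $\tilde\Cf$ from the outset), these roots become single-valued algebraic functions $t_i(\tilde{\vb})$ that are regular at $\tilde{\vb}_0$, since $L_b$ is projective of finite rank over the Dedekind domain $\Rfut$. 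For $\tilde{\vb}\in\tilde I_\Cf\setminus\{\tilde{\vb}_0\}$ the specialization $L_{b,\tilde{\vb}}$ realizes a discrete series character, so each $t_i(\tilde{\vb})$ satisfies the strict discrete series inequalities; passing to the limit along $\tilde I_\Cf$, the weaker Casselman inequalities persist at $\tilde{\vb}_0$, which is the required tempered bound.

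The principal technical point will be the regularity and single-valuedness of the $\Ac$-weights across $\tilde{\vb}_0$, since weights generically coalesce or cross as $\tilde{\vb}$ varies along $\tilde\Cf$. I plan to deal with this by the additional branched covering mentioned above, combined with the observation that temperedness is a statement about the multiset of weights, not about any ordering of them: the multiset of $\Ac$-eigenvalues of $L_{b,\tilde{\vb}_0}$ is literally the limit (with multiplicity) of the corresponding multisets at nearby $\tilde{\vb}\in\tilde I_\Cf$, because $L_b$ is a genuine locally free sheaf being specialized rather than a formal limit of characters. This reduces the argument to the elementary fact that the closed condition $|x(t)|\leq 1$ is stable under limits of algebraic families of weights, and gives the corollary.
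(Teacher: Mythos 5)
Your proof of genuineness is the same one-line observation as the paper's: you specialize the projective $\Hc_\Rfut$-module $L_b$ at the closed point $\tilde\vb_0$, and a specialization of a genuine module is genuine. Both proofs also share the final punchline on temperedness: the Casselman conditions on $\Ac$-weights are closed, so a limit along $\tilde I_\Cf$ of weights satisfying the strict discrete-series inequalities satisfies the weak inequalities, which by \cite[Lemma 2.20]{Opd1} gives temperedness.

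Where you diverge is in how you control the $\Ac$-weights of $L_{b,\tilde\vb}$ as $\tilde\vb\to\tilde\vb_0$. You introduce the characteristic polynomials of $\theta_x|_{L_b}$ over $\Rfut$ and pass to a further branched cover to make the eigenvalues single-valued and regular. The paper avoids both steps entirely by a cleaner observation: since the central character of $L_b$ is the $W_0$-orbit $W_0r$, and $\Zc_\Rfu=\Ac_\Rfu^{W_0}$, the generalized $\Ac$-weight spaces are forced a priori to sit at points of the finite subset $W_0r\subset T(\Rfu)$, which are already regular $\Rfu$-points — no new covering is needed. The paper then tracks the multiplicities of these known candidate weights via Riesz (holomorphic functional calculus) projections $\Pi_{i,r',\tilde\vb}$, which are manifestly continuous in $\tilde\vb$, so the multiset of generalized $\Ac$-weights varies continuously as you assert.

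The one point in your argument I'd ask you to tighten is the passage from ``each $\theta_{x_i}$ has a continuously varying multiset of eigenvalues'' to ``the multiset of joint generalized $\Ac$-weights (as points of $T$) varies continuously.'' These are not the same statement: the eigenvalues of the individual commuting operators are scalars, whereas a weight is a character of $\Ac$, i.e.\ a point of $T$, and one must explain why the scalars pair up coherently across the family (eigenvalue crossing is exactly the issue you flag). This is precisely what the paper's Riesz projection argument supplies: $\Pi_{r',\tilde\vb}$ is built as a composition of commuting projections for each $\theta_i$ and its continuity in $\tilde\vb$ shows the rank of the generalized weight space at $r'(\vb)$ is locally constant. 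Your ``multiset is literally the limit'' statement is true, but as written it presupposes the conclusion rather than proving it; inserting either the central character constraint or the Riesz projection step would close the gap.
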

\begin{proof}
Since the limit $\ep(b,C)\textup{Ind}_D(b,C;\vb_0)$ was defined by specializing the 
$\Rfut$-module $L_b$ at $\tilde{\vb}=\tilde{\vb}_0$, it is by definition a genuine character. 

The central character of $L_b(\tilde{\vb})$ is $W_0r(\vb)$ (with $\vb=\phi(\tilde{\vb})$), 
hence for all $z\in\Zc_{\Rfu}$ we have $z(W_0r)\in\Rfu$. Recall that $\Zc_\Rfu=\Ac_\Rfu^{W_0}$,  
hence the generalized $\Ac_\Rfu$ weight spaces belongs to the set 
$W_0r\subset T(\Rfu)$. 

Let $r'(\vb_0)\in W_0r(\vb_0)$, and let $\tilde{\vb}\in\tilde{I}_\Cf$ be close to $\tilde{\vb}_0$.
Choose a basis $\{x_1,\dots,x_n\}$ of $X$, and consider the corresponding commuting elements  
$\theta_i\in\Ac $ acting in $L_b(\tilde{\vb})$. 
For each $i$ consider the set $S_{i,r'(\vb_0)}:=\{r''\in W_0r\mid x_i(r''(\vb_0))=x_i(r'(\vb_0))\}$ 
of generalized eigenvalues of $\theta_i$ (acting in  $L_b(\tilde{\vb})$) which coalesce to 
$x_i(r'(\vb_0))$ at $\vb=\vb_0$. Then the projection 
$\Pi_{r',\tilde{\vb}}$ onto the direct sum of the generalized $\Ac$-weight spaces  
in $L_b(\tilde{\vb})$ which coalesce to $r'(\vb_0)$ is the composition of the commuting projection 
operators 
$\Pi_{i,r',\tilde{\vb}}$ onto the direct sum of the generalized eigenspaces $r''(\vb)$ of  
$\theta_i(\tilde{\vb})$ with $r''\in S_{i,r'(\vb_0)}$. 
By holomorphic functional calculus we have 
$$
\Pi_{i,r',\tilde{\vb}}=\frac{1}{2\pi i}\int_{\partial{D_i}}(z\textup{Id}-\theta_i(\tilde{\vb}))^{-1}dz
$$
where $D_i$ is a fixed disk around $x_i(r'(\vb_0))$ such that $x_i(r''(\vb))\in D_i$ if and 
only if $r''\in S_{i,r'(\vb_0)}$ (such $D_i$ clearly exist, provided $\vb$ is sufficiently close to $\vb_0$).
Hence the $\Pi_{i,r',\tilde{\vb}}$ are continuous in $\tilde{\vb}$. In particular $\Pi_{r',\tilde{\vb}}$
is continuous in $\tilde{\vb}$, implying that the dimension of its image is independent of $\vb$.
Therefore $r'(\vb_0)$ is a generalized $\Ac$-weight of $L_b(\tilde\vb_0)$ if and only 
if for some $\vb\in I_\Cf$ sufficiently close to $\vb_0$, there exists a $r''(\vb)$ 
with $r''(\vb_0)=r'(\vb_0)$ (i.e. $r''\in\cap_i S_{i,r'(\vb_0)}$) such that $r''(\vb)$ 
is a generalized $\Ac$-weight of the discrete series 
representation $L_b(\tilde\vb)$. In particular, $r'(\vb)$ is a limit of 
$\Ac$-weights which meet the Casselman condition  
for temperedness  \cite[Lemma 2.20]{Opd1}. This is a closed condition, 
hence the generalized $\Ac$-weights of $L_b(\tilde\vb_0)$ 
satisfy the Casselman conditions themselves. The lemma loc. cit. implies that 
$L_b(\tilde\vb_0)=\ep(b,C)\textup{Ind}_D(b,C;\vb_0)$ is tempered.
\end{proof}
\begin{cor}
The tempered character $\ep(b,C)\textup{Ind}_D(b,C;\vb_0)$ is a member of an algebraic family 
of characters of $\Hc_\Rfut$-characters with values in $\Rfut$. For generic element $\tilde{\vb}\in\Uft$
this character is irreducible, and for  $\tilde{\vb}\in\Uftst\cap C$ it is an irreducible discrete series.
\end{cor}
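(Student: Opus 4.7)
The plan is to read the corollary off of the algebraic model $L_b$ provided by Proposition \ref{prop:basicM}(vi). Since $L_b$ is locally free of finite rank $d$ over $\Rfut$, the assignment $\chi_{L_b}(h):=\textup{tr}(h\mid L_b)\in\Rfut$ is an $\Rfut$-valued trace on $\Hc_\Rfut$, giving exactly the notion of an algebraic family of characters of $\Hc_\Rfut$ valued in $\Rfut$. By the very definition of $\textup{Ind}_D(b,C;\vb_0)$ as the specialization of $L_b$ at $\tilde{\vb}_0$, one has $\ep(b,C)\textup{Ind}_D(b,C;\vb_0)=\chi_{L_b}(\tilde{\vb}_0)$. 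Together with the previous corollary (temperedness), this verifies that the limit is a tempered member of the family.

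For $\tilde{\vb}\in\Uftst\cap\phi^{-1}(C)$ (nonempty since $\Cf\cap C\subset\Ufst$ and $\phi$ is surjective), Proposition \ref{prop:basicM}(viii) identifies $L_{b,\tilde{\vb}}$ with $V_{\chi_{b,\vb}}$, where $\chi_{b,\vb}=\ep(b;\vb)\textup{Ind}_D(b;\vb)$ is, by Definition \ref{defn:indD}, an irreducible discrete series character; the sign $\ep(b,\vb)$ is constant on $C$ by Corollary \ref{c:loc-const}, and equals $\ep(b,C)$. This settles the second assertion (irreducible discrete series on $\Uftst\cap C$).

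For generic irreducibility on $\Uft$, I would work with the image $A_b\subset\textup{End}_\Rfut(L_b)$ of the structure map $\Hc_\Rfut\to\textup{End}_\Rfut(L_b)$ defined by the action on $L_b$. Localizing at $\Uftst$ and using the matrix decomposition of Proposition \ref{prop:basicM}(vii), namely $\tilde{M}_{W_0r,C}^{\Ufst}\simeq \bigoplus_{i}\textup{End}_{\Rfutst}(L_i^*)$, the projection onto the $b$-summand factors the action of $\Hc_{\Rfutst}$ through all of $\textup{End}_{\Rfutst}(L_b\otimes\Rfutst)$, so $A_b\otimes\Rfutst=\textup{End}_{\Rfutst}(L_b\otimes\Rfutst)$. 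Passing to the fraction field $\Lf$ of $\Rfut$ this yields $A_b\otimes\Lf=\textup{End}_\Lf(L_b\otimes\Lf)$, of rank $d^2$. Hence the cokernel $\textup{End}_\Rfut(L_b)/A_b$ is $\Rfut$-torsion and thus supported on a proper Zariski-closed subset $Z\subset\Uft$. For every $\tilde{\vb}\in\Uft\setminus Z$ the inclusion $A_b\subset\textup{End}_\Rfut(L_b)$ becomes an equality after localizing at $\tilde{\vb}$; specializing at $\tilde{\vb}$ then shows that the image of $\Hc_{\phi(\tilde{\vb})}$ in $\textup{End}_\Cb(L_{b,\tilde{\vb}})$ is all of $\textup{End}_\Cb(L_{b,\tilde{\vb}})$, which forces $L_{b,\tilde{\vb}}$ to be irreducible.

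The main subtle point I anticipate is the last step: a naive attempt to deduce irreducibility from upper semi-continuity of fiber dimensions of $A_b$ itself fails because at torsion points the fiber can \emph{jump up}, and one cannot directly conclude that the image inside $\textup{End}_\Cb(L_{b,\tilde{\vb}})$ is all of $\textup{End}_\Cb(L_{b,\tilde{\vb}})$. Passing instead to the cokernel $\textup{End}_\Rfut(L_b)/A_b$ and using that it is torsion (thanks to the Azumaya structure in Proposition \ref{prop:basicM}(vii)) circumvents this issue, since then $A_b=\textup{End}_\Rfut(L_b)$ away from a proper closed subset, which is exactly what is needed.
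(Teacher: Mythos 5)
Your proof is correct, and since the paper states this corollary without any proof (treating it as immediate from Proposition \ref{prop:basicM} and the construction of $L_b$), you are supplying the omitted details in exactly the intended way: the algebraic family is $\tilde{\vb}\mapsto\textup{tr}(\cdot\mid L_{b,\tilde{\vb}})$, which is $\Rfut$-valued because $L_b$ is projective of finite rank over the Dedekind domain $\Rfut$, and the discrete-series statement on $\Uftst\cap\phi^{-1}(C)$ is a direct reading of Proposition \ref{prop:basicM}(viii).

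One small remark on the third step: the torsion-cokernel argument is correct, and you are right that naively arguing via fiber dimensions of $A_b$ would not immediately suffice (the map $A_b\otimes\Cb_{\tilde{\vb}}\to\textup{End}_\Cb(L_{b,\tilde{\vb}})$ need not be injective at special points, even though $A_b$ itself, being torsion-free over a Dedekind ring, has constant fiber dimension). But there is a slightly shorter route that avoids introducing the cokernel: Proposition \ref{prop:basicM}(vii) already asserts equality $A_b\otimes\Rfutst=\textup{End}_{\Rfutst}(L_b^*)$ on the Zariski open dense $\Uftst\subset\Uft$, so for any $\tilde{\vb}\in\Uftst$ one gets a surjection $\Hc_{\phi(\tilde{\vb})}\twoheadrightarrow\textup{End}_\Cb(L_{b,\tilde{\vb}})$ by specialization and hence irreducibility by Burnside. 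This suffices for "generic''. Your cokernel argument does the same work and in principle extends irreducibility to the possibly larger open set $\Uft\setminus Z\supset\Uftst$, so both are valid; the direct appeal to (vii) is just a bit more economical.
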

\begin{rem}
The limit $\ep(b,C)\textup{Ind}_D(b,C;\vb_0)$ may be irreducible or not. In the case of 
the Hecke algebras of type $\textup{C}_n^{(1)}$ the limits of discrete series were constructed 
using the geometric model \cite{K} of characters of the generic affine Hecke algebra of this type. 
In this situation it is known that the limits of the discrete series 
at nontrivial singular parameters are always irreducible \cite{CKK}. In general the limit 
to the trivial parameter $\vb=1$ will be a reducible character of the affine Weyl group
(the only exceptions being the "one $W$-type" discrete series characters).
\end{rem}
\begin{prop}\label{prop:limlim0}
We have $\lim_{\ep\to 0} [\textup{Ind}_D(b,C;\vb_0^\ep)]=b\in \Bc_{gm}\subset \overline{\mc{R}}_\Zb(W)$
\end{prop}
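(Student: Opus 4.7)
\medskip
\noindent\emph{Proof plan.} The plan is to adapt the argument of Proposition \ref{prop:limlim} so that it applies at the endpoint $\tilde\vb_0$ of the lifted path $\tilde I_\Cf$, and not only at interior points of $\tilde I_\Cf \cap \Uftst$. By construction, $\epsilon(b,C)\textup{Ind}_D(b,C;\vb_0) = L_b(\tilde\vb_0)$, where $L_b$ is the $\Rfut$-lattice of Proposition \ref{prop:basicM}(vi); and for $\tilde\vb \in \tilde I_\Cf \cap \Uftst$ the specialization $L_b(\tilde\vb) = \epsilon(b,C)\textup{Ind}_D(b;\vb)$ has scaling limit $\epsilon(b,C)\,b$ by Definition \ref{defn:indD}. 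The strategy is to show that the scaling-limit computation is algebraic along $\tilde\Cf$, so that its value is forced by continuity to agree at $\tilde\vb_0$ with its value at nearby interior points.

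Writing $b = \textup{Ind}_s(b_s)$ as in Corollary \ref{cor:gVcc}, with $r = sc$ the generic residual point attached to $b$, the first step is to observe that the Bernstein-type idempotent $e_s$ used in the proof of Proposition \ref{prop:limlim} is $\vb$-independent. Indeed, the unitary parts of the orbit $W_0r(\vb)$ always form the fixed orbit $W_0s$, and for $s'\neq s''$ in $W_0s$ the cosets $s'W_{s'}c_{s'}(\vb)$ and $s''W_{s''}c_{s''}(\vb)$ have disjoint unitary components at every $\vb$. Hence $e_s$ is defined in $\widehat{(\Hc_{\phi(\tilde\vb)})}_{W_0r(\vb)}$ for every $\tilde\vb \in \Uft$, and $L_s := e_sL_b$ is an algebraic family over all of $\Uft$. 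Via \cite[Lemmas 8.14, 8.15]{Lus2} this yields a family of modules over the finite-type Hecke subalgebra $\Hc_{s,\Rfut,0} = \Hc_\Rfut(W(R_{s,1})) \rtimes \Gamma_s$. The rationality of finite Hecke algebra characters (\cite[Theorem 5]{Opd0}) now implies that the character of $L_s$ on $\Hc_{s,\Lambda,0}$ takes values in $\Lambda$; hence its specialization at $v = 1$ is a single element $b^\sharp \in \overline{\mc{R}}_\Zb(W_s)$ that does not depend on $\tilde\vb$ along the connected set $\tilde I_\Cf \cup \{\tilde\vb_0\}$. Applying Corollary \ref{cor:lim} at an interior $\tilde\vb \in \tilde I_\Cf \cap \Uftst$, together with Definition \ref{defn:indD}, identifies $\textup{Ind}_s(b^\sharp) = \epsilon(b,C)\,b$; the same corollary applied at $\tilde\vb_0$ then yields
\begin{equation*}
\lim_{\epsilon \to 0}\bigl[L_b(\tilde\vb_0)_{\vb_0^\epsilon}\bigr] \;=\; \textup{Ind}_s(b^\sharp) \;=\; \epsilon(b,C)\,b,
\end{equation*}
and dividing by $\epsilon(b,C)$ gives the claim.

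The main obstacle that really needs to be justified is that the idempotent decomposition and Lusztig's reduction extend as \emph{algebraic} structures across the singular parameter $\tilde\vb_0 \notin \Uftst$. Proposition \ref{prop:basicM}(v)--(vii) constructs the required Azumaya/separable picture only on $\Uftst$, and a priori Lusztig's reduction could degenerate at $\tilde\vb_0$. The decisive point is that Lusztig's reduction data $(R_{s,1}, \Gamma_s, k_s)$ depend only on the fixed unitary part $s$ and not on $\vb$, so the intertwining isomorphisms between $\widehat{(\Hc_{\phi(\tilde\vb)})}_{s,W_0r(\vb)}$ and $\widehat{\Hc_{\phi(\tilde\vb)}(\Rc_s)}_{W_sc}\rtimes\Gamma_s$ vary algebraically with $\tilde\vb$ and remain well-defined at $\tilde\vb_0$. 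This algebraic compatibility is precisely what lets the rationality-based constancy of $b^\sharp$ propagate from $\tilde I_\Cf \cap \Uftst$ all the way to the endpoint $\tilde\vb_0$.
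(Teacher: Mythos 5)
Your argument is correct and is in substance the same proof the paper intends: the paper's entire proof is the single sentence ``the proof is exactly the same as the proof of Proposition \ref{prop:limlim},'' and you have supplied the details that make that claim actually hold at the boundary point. In particular, you correctly pin down the one place where the transfer is not merely verbatim --- the proof of Proposition \ref{prop:limlim} is carried out over $\Uftst$, while $\tilde\vb_0\notin\Uftst$ --- and resolve it by observing that the idempotent $e_s$ separates weight spaces by their \emph{unitary} parts, which never coalesce along $\Uft$, so that $L_s=e_sL_b$ is a projective $\Rfut$-module defined across $\tilde\vb_0$ and Lusztig's reduction together with the rationality of finite Hecke algebra characters then propagates the constancy of $b^\sharp$ to the endpoint.
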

\begin{proof}
The  proof is exactly the same as the proof of Proposition \ref{prop:limlim}.
\end{proof}
%
\begin{cor}
The covering $\phi$ is not ramified at the points of $\Cf\cap\overline{C}$.
\end{cor}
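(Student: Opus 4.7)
The plan is to mimic the short argument at the end of the proof of Proposition~\ref{prop:basicM}(viii), with the role of \cite[Corollary 5.11]{OpdSol2} (local constancy of the number of discrete series on $C$) taken over by the newly established Proposition~\ref{prop:limlim0}. Suppose for contradiction that $\phi$ is ramified at some $\tilde{\vb}_0\in\phi^{-1}(\vb_0)$ with $\vb_0\in\Cf\cap\overline{C}$ and ramification index $e>1$. Proposition~\ref{prop:basicM}(viii) itself already rules out ramification over points of $\phi^{-1}(\Cf\cap C)$, so necessarily $\vb_0\in\Cf\cap\partial C$.

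The next step is to produce from this hypothetical ramification several distinct elements of $\Bc_{gm}$ whose tempered-limit characters at $\vb_0$ coincide. Locally around $\tilde{\vb}_0$, the covering $\phi$ looks like an $e$-fold cover, so the pre-image $\phi^{-1}(\Cf\cap C)$ consists, near $\tilde{\vb}_0$, of $e$ distinct smooth arcs $\tilde{I}^{(1)},\dots,\tilde{I}^{(e)}$ all terminating at $\tilde{\vb}_0$. Fix $\iota_0\in\Ic$ arbitrarily. For $\vb\in\Cf\cap C$ sufficiently close to $\vb_0$, the $e$ lifts $\tilde{\vb}^{(k)}\in\tilde{I}^{(k)}$ lie in distinct primes of $\Rfutst$ above $\vb$, and by Proposition~\ref{prop:basicM}(viii) each specialization $L_{\iota_0,\tilde{\vb}^{(k)}}$ is an irreducible discrete series of $\Hc_\vb$, corresponding under Definition~\ref{defn:indD} to an element $b^{(k)}\in\Bc_{gm}$ with $\ep(b^{(k)},C)\,\textup{Ind}_D(b^{(k)};\vb)\simeq L_{\iota_0,\tilde{\vb}^{(k)}}$. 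After possibly shrinking $\Ufst$ to discard the proper closed locus where two such Galois-conjugate specializations of the projective $\Rfutst$-module $L_{\iota_0}$ (a summand of the maximal order of Proposition~\ref{prop:basicM}(vii)) happen to agree as $\Hc_\vb$-modules -- a shrinking which preserves the local behaviour of $\phi$ near $\tilde{\vb}_0$ -- the $b^{(k)}$, $k=1,\dots,e$, are pairwise distinct.

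Letting now $\tilde{\vb}^{(k)}\to\tilde{\vb}_0$ along each arc, the $e$ specializations of $L_{\iota_0}$ all converge to the single character $L_{\iota_0,\tilde{\vb}_0}$ of $\Hc_{\vb_0}$. By the definition of $\textup{Ind}_D(b,C;\vb_0)$ given just before the corollary, this yields $\ep(b^{(k)},C)\,\textup{Ind}_D(b^{(k)},C;\vb_0)=L_{\iota_0,\tilde{\vb}_0}$ for every $k$, so the $e$ tempered characters $\textup{Ind}_D(b^{(k)},C;\vb_0)$ agree up to the signs $\ep(b^{(k)},C)\in\{\pm 1\}$. Applying Proposition~\ref{prop:limlim0} and the linearity of the scaling map $\lim_{\ep\to 0}$, one obtains $b^{(k)}=\pm b^{(l)}$ for all $k,l$. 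But $\Bc_{gm}\subset\overline{\mc{R}}_\Zb(W)$ is an orthonormal basis of $\Yc_{gm}$ (Proposition~\ref{prop:Bgm}), so no two distinct basis elements can be negatives of one another. Thus $b^{(1)}=\cdots=b^{(e)}$, contradicting pairwise distinctness; hence $e=1$.

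The main obstacle is the distinctness step for the $b^{(k)}$, i.e.\ the claim that distinct sheets of $\phi$ above a generic $\vb\in\Cf\cap C$ produce non-isomorphic specializations of the fixed component $L_{\iota_0}$ as $\Hc_\vb$-modules. This is essentially a Galois-theoretic statement about the étale structure~(vii) of $M_{W_0r,C}^\Ufst$ -- the endomorphism algebra $\textup{End}_\Rfutst(L_{\iota_0})$ must distinguish the sheets -- and the locus where distinctness fails is a proper closed subvariety of $\Uftst$, which can be removed by an additional generic shrinking without perturbing the hypothetical ramified point $\tilde{\vb}_0$ in the closure.
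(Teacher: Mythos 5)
The proposal has a genuine gap at exactly the step you yourself flag as the ``main obstacle'': the distinctness of the specializations $L_{\iota_0,\tilde{\vb}^{(k)}}$. This is not a proper-closed-subvariety condition. Using the compatibility of the Galois action with the bijection of Proposition~\ref{prop:basicM}(viii) — namely $L_{\iota_0,\gamma\tilde{\vb}}\simeq L_{\gamma^{-1}(\iota_0),\tilde{\vb}}$ as $\Hc_\vb$-modules, hence $L_{\iota_0,\gamma\tilde{\vb}}\simeq L_{\iota_0,\tilde{\vb}}$ iff $\gamma$ fixes $\iota_0$ — one sees that for a fixed group element $\gamma$ the locus ``$L_{\iota_0,\gamma\tilde{\vb}}\simeq L_{\iota_0,\tilde{\vb}}$'' is either \emph{all} of $\Uftst$ or empty, independently of $\tilde{\vb}$. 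There is nothing to shrink away, and the claim that one can discard a proper closed locus is simply false. To rescue the argument one would have to show that the local monodromy (inertia at $\tilde{\vb}_0$) acts nontrivially on $\iota_0$ for some $\iota_0$, and you do not address this.

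Worse, the distinctness claim is precisely the \emph{negation} of the key observation in the paper's proof. The paper invokes the argument of Proposition~\ref{prop:limlim}: the $W_s$-elliptic class $b_s=\lim_{\ep\to 0}[\chi_{s,\vb^\ep}]$ of $L_b$ depends only on the restriction to the finite Hecke subalgebra $\Hc_{s,\Rfutst,0}$, whose irreducible character values lie in $\Lambda$ (hence in $\Rf$, not requiring the extension to $\Lf$); therefore $b_s$ is constant not only along the curve but across the whole fibre of $\phi$ over a point near $\vb_0$. Since the discrete series with generic central character $W_0r$ are \emph{parameterized} by $b_s$ via the isometric scaling map, all sheets realize the \emph{same} discrete series, the character values descend to $\Rfu$ near $\vb_0$, there is no monodromy, and consequently no ramification. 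In other words, the specializations you want to be distinct are in fact always isomorphic, so assuming ramification and then postulating distinctness cannot lead to a valid contradiction. The correct route is the direct one the paper takes: establish the constancy (no monodromy) rather than posit the distinctness.
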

\begin{proof}
The argument of Proposition \ref{prop:limlim} proves that 
$b_s=\lim_{\ep\to 0}[\chi_{s,\vb^\ep}]$ for any $\vb$ in a neighborhood of $\vb_0$.
On the other hand, the discrete series of $\Hc_\vb$ are parameterized by the 
$b\in\Bc_{\vb-gm}=\Bc_{gm}\cap \Yc_{\vb-gm}$ by Corollary \ref{cor:lim}, Proposition \ref{prop:Bgm} 
and Subsection \ref{subsec:dirac}. Moreover, for $b$ in the image of $\textup{Ind}_s$, 
the limit map $b\to b_s$ is an isometry by Corollary \ref{cor:lim}.  
Hence the generic discrete series with generic central character $W_0r$ 
are also parameterized by the corresponding elliptic class $b_s$ of $W_s$.
Therefore the algebraic continuation of the discrete series characters $\ep(b,C)\textup{Ind}_D(b;\vb)$ 
(realized by $L_b$) 
on $\Uftst$ can not have monodromy around $\vb_0$, by the above. 
We conclude that the character values are 
regular functions in a neighborhood of $\vb_0\in\Uf$.
\end{proof}
\subsection{The rationality of the generic formal degree}
\subsubsection{The universality of the rational factors of the generic formal degree}
The following result follows now simply from \cite[Corollary 5.7]{CiuOpd1}, \cite{COT} and \cite{OpdSol2}:
\begin{thm}\label{thm:thm:fd}
Fix $b\in \Bc_{gm}$.
The formal degree of a continuous family 
of virtual discrete series characters $\Qc_b^{reg}\ni\vb\to\textup{Ind}_D(b;\vb)$ as a function of $\vb\in\Qc_b^{reg}$ 
is a rational function of the form $d_bm_b$, with $d_b\in\Qb^\times$.
\end{thm}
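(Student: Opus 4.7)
The plan is to reduce the statement to the known rationality result for the graded affine Hecke algebra via Lusztig's reduction theorems, carefully tracking how the formal degree and mass function transform under this reduction.

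First, I would fix $b\in\Bc_{gm}$ with $gcc_B(b)=W_0r_b$ and $r_b=s\exp(\xi_s)$, as in Proposition~\ref{prop:Bgm} and Corollary~\ref{cor:gVcc}. By the Morita equivalence (\ref{eq:red}) and the Clifford theory reviewed in Section~\ref{sec:cliff}, $\textup{Ind}_D(b;\vb)$ corresponds to the $\Hb(R_{s,1},T_s(T),F_{s,1};k_s(\vb))\rtimes\Gamma_s$-module $N_\Hb(U,M)$, where $U=\textup{Ind}_D^{an}(b_s;k_s(\vb))$ is the analytic Dirac induction for the graded affine Hecke algebra, with linear parameter $k_s(\vb)$ depending linearly on $\log\vb$.

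Second, by \cite[Corollary 5.7]{CiuOpd1} combined with the generic Vogan conjecture established in \cite{COT}, the formal degree of the analytically Dirac-induced discrete series $U$ in the graded setting has the form
\begin{equation*}
\textup{fdeg}(U)=d_{b_s}^{\textup{lin}}\,m_{b_s}^{\textup{lin}}(k_s(\vb)),
\end{equation*}
where $m_{b_s}^{\textup{lin}}$ is the mass function attached to the generic linear residual point $\xi_s$ for $\Hb(R_{s,1},T_s(T),F_{s,1};\cdot)$, and $d_{b_s}^{\textup{lin}}\in\Qb^\times$ is independent of $\vb$.

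Third, I would compare the formal degrees of $\textup{Ind}_D(b;\vb)$ and $U$. The passage from $U$ to $N_\Hb(U,M)$ and then to a representation of $\Hc_\vb$ via the Morita equivalence (\ref{eq:red}) multiplies the formal degree by the Clifford factor $|\Gamma_s|/|\Gamma_U|\in\Qb^\times$ times the ratio of Plancherel densities on the two sides of (\ref{eq:red}). From the explicit residue formulas for the Plancherel measure of $\Hc_\vb$ recalled in \cite[Theorem 4.7]{Opd1} and analyzed at $r_b(\vb)$ in \cite[Section 5]{OpdSol2}, this ratio equals $m_b(\vb)/m_{b_s}^{\textup{lin}}(k_s(\vb))$ up to a rational constant depending only on $(\Rc,s)$. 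Combining everything yields
\begin{equation*}
\textup{fdeg}(\textup{Ind}_D(b;\vb))=d_b\,m_b(\vb)\qquad\text{with}\qquad d_b\in\Qb^\times,
\end{equation*}
independent of $\vb\in\Qc_b^{reg}$.

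The main obstacle is the third step: verifying that the ratio of Plancherel measures under Lusztig's reduction equals $m_b(\vb)/m_{b_s}^{\textup{lin}}(k_s(\vb))$ up to a $\vb$-independent rational constant. This amounts to matching the residue formulas on both sides of (\ref{eq:red}) at the residual point $r_b(\vb)$, and checking that the ``non-linear'' factors accounting for the coordinates of $T$ transverse to the graded direction $T_s(T)$ are regular and rational at every $\vb\in\Qc_b^{reg}$; continuity on $\Qc_b^{reg}$ (Proposition~\ref{prop:limlim}) then pins down the constant as the same rational number for all $\vb$. Once this identification is in hand, rationality of $d_b$ follows at once from the graded input of \cite{CiuOpd1,COT} and the Clifford bookkeeping.
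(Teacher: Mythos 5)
Your proposal takes a different route from the paper (reduction to the graded affine Hecke algebra via Lusztig/Clifford theory), but it has a genuine gap that sits precisely at the heart of the theorem.

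The crucial difficulty in Theorem~\ref{thm:thm:fd} is not the factorization $\textup{fdeg}=d_b(C)\,m_b$ on each chamber $C\subset\Qc_b^{reg}$ (this is already \cite[Theorem 4.6]{OpdSol2} plus the generic Vogan central character from \cite{COT}); the content is that the constants $d_b(C)$ agree across the \emph{different} connected components of $\Qc_b^{reg}$. The paper's proof handles this with one short, decisive observation: by \cite[Corollary 5.7]{CiuOpd1}, the formal degree of $\textup{Ind}_D(b;\vb)$ is a rational function of $\vb$ determined entirely by the elliptic class $b=\lim_{\ep\to 0}\textup{Ind}_D(b;\vb^\ep)$, which is by construction chamber-independent. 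Since both the formal degree and $m_b$ are thus \emph{globally} rational on $\Qc$, the piecewise constants $d_b(C)$ are forced to coincide. That global-rationality-forces-constancy argument is entirely absent from your proposal.

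Instead, your second step asserts $\textup{fdeg}(U)=d_{b_s}^{\textup{lin}}\,m_{b_s}^{\textup{lin}}(k_s(\vb))$ with $d_{b_s}^{\textup{lin}}\in\Qb^\times$ independent of $\vb$ for the analytically Dirac-induced graded discrete series $U$. That is precisely the graded analogue of the theorem you are trying to prove: the graded parameter space again breaks into chambers, and neither \cite[Corollary 5.7]{CiuOpd1} nor the Vogan conjecture of \cite{COT} by itself gives chamber-independence of that constant. You would have to prove it first — and the natural way to do so is to run the same rationality-versus-piecewise-constancy argument in the graded setting, which circles back to the idea you have skipped. Moreover, your appeal to ``continuity on $\Qc_b^{reg}$ then pins down the constant'' cannot do the work: continuity pins $d_b$ down only on each connected component, and $\Qc_b^{reg}$ is typically disconnected. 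Finally, step three (the residue-formula comparison of Plancherel densities under (\ref{eq:red})) is an unverified claim that you yourself flag as the main obstacle; even granted, it would only transfer the problem to the graded side, not solve it.
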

\begin{proof}
By \cite[Corollary 5.7]{CiuOpd1} the formal degree of $\textup{Ind}_D(b;\vb)$ is a linear combination of 
rational functions of the $\vb_s$, which only only depends on 
the elliptic class $\lim_{\ep\to 0}\textup{Ind}_D(b;\vb^\ep)$. 
This is, by definition, $b$ (and therefore 
independent of $\vb$). On the other hand we have shown that this family is continuous and 
(by \cite{COT})  has generic central character $W_0r_b$. By \cite[Theorem 4.6]{OpdSol2}
we conclude that the formal degree has the form $d_b(C)m_b$ for some constants $d_b(C)$
depending on the connected component $C$ of $\Qc_b^{reg}$ in which $\vb$ lies.
But the rationality implies that these constants need to be equal on all chambers.  
\end{proof}

\begin{cor}\label{cor:unique}
We can choose the basis vector $b\in\Bc_{gm}$ uniquely such that $d_b>0$.
With this choice, $\ep(b,C)$ will be equal to the sign of $m_b(\vb)$ for $\vb\in C$. 
\end{cor}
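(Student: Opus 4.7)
The plan is to use Theorem \ref{thm:thm:fd} to pin down the sign ambiguity in $\Bc_{gm}$, and then to relate the sign $\ep(b;\vb)$ to $\text{sign}(m_b(\vb))$ via the positivity of formal degrees.

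First I would observe that the basis $\Bc_{gm}$ produced by Proposition \ref{prop:Bgm} is unique only up to signs and permutation. Indeed, $\Yc_{gm}$ is a $\Zb$-lattice equipped with the integral positive definite form $EP_\Hc$ (equivalently $EP_W$, via the isometric scaling map), so any two orthonormal $\Zb$-bases of $\Yc_{gm}$ are related by an element of $O(\Yc_{gm}) \cap \textup{GL}(\Yc_{gm}) \subset \textup{GL}_n(\Zb) \cap O(n)$, which consists of signed permutation matrices. Consequently, the only real ambiguity per basis vector is the replacement $b \leftrightarrow -b$.

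Next, I would exploit the $\Zb$-linearity of Dirac induction $b \mapsto \textup{Ind}_D(b;\vb)$ (as recorded in the discussion preceding Theorem \ref{thm:uniform}) together with the linearity of $\textup{fdeg}$ and the fact that $m_b = m_{-b}$ (since both depend only on the generic central character $W_0r_b = W_0r_{-b}$). From Theorem \ref{thm:thm:fd} we obtain
\begin{equation*}
\textup{fdeg}(\textup{Ind}_D(-b;\vb)) = -\textup{fdeg}(\textup{Ind}_D(b;\vb)) = -d_b\, m_b(\vb),
\end{equation*}
so $d_{-b} = -d_b$. Since $d_b \in \Qb^\times$ by Theorem \ref{thm:thm:fd}, exactly one of the two representatives $\{b, -b\}$ satisfies $d_b > 0$. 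Choosing this representative for each basis vector defines the basis $\Bc_{gm}$ uniquely; this establishes the first assertion.

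For the second assertion, recall from Definition \ref{defn:indD} that $\ep(b;\vb)\textup{Ind}_D(b;\vb)$ is an honest irreducible discrete series representation of $\Hc_\vb$, and in particular has strictly positive formal degree. Combining this with Theorem \ref{thm:thm:fd} we get
\begin{equation*}
0 < \textup{fdeg}\bigl(\ep(b;\vb)\textup{Ind}_D(b;\vb)\bigr) = \ep(b;\vb)\, d_b\, m_b(\vb).
\end{equation*}
Since $d_b > 0$ by our choice, this forces $\ep(b;\vb) = \textup{sign}(m_b(\vb))$ for every $\vb \in \Qc_b^{reg}$, and in particular for every $\vb$ in a connected component $C$ of $\Qc_b^{reg}$.

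The only nontrivial point is the initial rigidity observation that orthonormal bases of an integral Euclidean lattice differ only by signed permutations; this is the step where the uniqueness assertion really lives, but it is elementary once one remembers that an orthonormal basis necessarily consists of norm-$1$ lattice vectors and spans the full lattice. Everything else is a direct bookkeeping consequence of Theorem \ref{thm:thm:fd} and the positivity of formal degrees of genuine discrete series.
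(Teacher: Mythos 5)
Your proof is correct and is essentially the argument the paper implicitly invokes — the corollary is stated without proof precisely because, once Theorem \ref{thm:thm:fd} gives $d_b\in\Qb^\times$ and Proposition \ref{prop:Bgm} gives existence of an orthonormal basis, the sign normalization and the identity $\ep(b;\vb)=\textup{sign}(m_b(\vb))$ follow in exactly the way you describe. The only thing the paper leaves unstated that you rightly make explicit is the lattice-rigidity fact that orthonormal $\Zb$-bases of an integral Euclidean lattice differ only by signed permutations, together with the anti-symmetry $d_{-b}=-d_b$ coming from linearity of $\textup{Ind}_D$, $\textup{fdeg}$, and the equality $m_{-b}=m_b$; these are the right observations and they close the argument cleanly.
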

\subsection{Proof of Theorem \ref{thm:uniform}} 
The proof of Theorem 1.1 is all completed now, and we point out where the various parts of it have been proved. Part (a) is in Proposition \ref{prop:Bgm} and Corollary \ref{cor:unique}. Part (b) is in subsection \ref{subsec:dirac}. Part (c) is Corollary \ref{cor:gVcc}. Part (d) is in Proposition \ref{prop:limlim}. Part (e) is Corollary \ref{c:loc-const}. Part (f) follows from Corollary \ref{cor:lim}, Proposition \ref{prop:Bgm}, and subsection \ref{subsec:dirac}. Part (g) is Theorem \ref{thm:thm:fd}.
\section{Explicit results; comparison with Kazhdan-Lusztig-Langlands classification}
In this section we will compare the uniform classification of the discrete series 
with the Kazhdan-Lusztig-Langlands classification when an affine Hecke algebra  
arises in the context of a unipotent type of an unramified 
simple group defined over a non-archimedean local field. We will also compute, for all non-simply laced affine Hecke 
algebras  of simple type, the canonically positive basis $\Bc_{gm}$ and the fundamental rational 
constants $d_b$ appearing in the generic Plancherel measure. In addition, we will show that 
in each connected component of $\mathcal{Q}^{reg}_{b}$ the generic discrete series 
character $\epsilon(b;\vb)\textup{Ind}_D(b;\vb)$ takes values in $Q_\Lambda$, 
the quotient field of $\Lambda$. 
Needless to say, the results in this section are 
based on case-by-case methods.
\subsection{Determination of the canonically positive basis $\Bc_{gm}$}
\subsubsection{The sign of $m_b(\vb)$} In light of Corollary \ref{cor:unique}, we need to analyze the sign of $m_b(\vb)$.   Let $r=r_b$ be a residual point. As in \cite[(40)]{OpdSol2}, we write 
\begin{equation}\label{e:m_b}
m_{W_0r}=\frac{\prod'_{\alpha\in R_1}(\alpha(r)^{-1}-1)}{\prod'_{\alpha\in R_1}(v_{\alpha^\vee}^{-1} \alpha(r)^{-1/2}+1)\prod'_{\alpha\in R_1}(v_{\alpha^\vee}^{-1}v_{2\alpha^\vee}^{-2}\alpha(r)^{-1/2}-1)},
\end{equation}
with the notation as in {\it loc.~cit.}. In particular, recall that $R_1$ is the reduced part of $R_0\cup\{2\alpha|\alpha^\vee\in 2Y\cap R_0^\vee\}$ and the parameters $v_{\beta^\vee}$ are defined in terms of the $v(\bs)$ as in \cite[(7),(8)]{OpdSol2}. Here $\prod'$ means that the identically zero factors are ignored. When $\alpha\in R_0\cap R_1$, $v_{2\alpha^\vee}=1$ and each factor in the denominator simplifies to $(v_{\alpha^\vee}^{-2}\alpha(r)^{-1}-1)$. Moreover, the expression in (\ref{e:m_b}) is independent of the choice of representative $r$ in its $W_0$-orbit.

Specialize
\[\vb(\bs)=\vb^{f_s} ,\text{ with }\vb>1,\ f_s\in\mathbb R.
\]
Write $r=sc$ as before and let $\Hb(R_{s,1},T_s(T),F_{s,1};k_s)$ denote the corresponding graded affine Hecke algebra. Recall that the parameter function $k_s$ is given by \cite[(26)]{OpdSol2}
\begin{equation}
k_s(\alpha)=
\begin{cases}
\log_{\vb}(v_{\alpha^\vee}^2), &\text{ if } \alpha\in R_0\cap R_1, \text{ or if }\alpha=2\beta,\ \beta\in R_0 \text{ and }\beta(s)=1,\\
\log_{\vb}(v_{\alpha^\vee}^2 v_{2\alpha^\vee}^4), &\text{ if }\alpha=2\beta,\ \beta\in R_0,\ \beta(s)=-1.\\
\end{cases}
\end{equation}
Write $c=\vb^{\bar c}.$

\begin{prop}\label{p:m-bar}
The sign of $m_{W_0,r}$ from (\ref{e:m_b}) equals the sign of the expression
\begin{equation}\label{e:m-bar}
\bar m_{W_0\bar c} =\frac{\prod'_{\alpha\in R_{s,1}}\alpha(\bar c)}{\prod'_{\alpha\in R_{s,1}}(\alpha(\bar c)-k_s(\alpha))}.
\end{equation}
\end{prop}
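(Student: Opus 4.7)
The plan is to split the product in (\ref{e:m_b}) according to whether $\alpha\in R_{s,1}$ or not, writing $m_{W_0 r}=P_0\cdot P_{s,1}$ where $P_{s,1}$ collects the factors with $\alpha\in R_{s,1}$ and $P_0$ collects those with $\alpha\in R_1\setminus R_{s,1}$. I would then show that $P_0$ is positive on the whole ray $\vb>0$ and that the sign of $P_{s,1}$ coincides pair-by-pair with the sign of $\bar m_{W_0\bar c}$ after pairing each root with its negative.

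For $P_0$ I would exploit that $\alpha(s)\neq 1$ for $\alpha\notin R_{s,1}$, so $\alpha(r)=\alpha(s)\vb^{\alpha(\bar c)}$ stays off the positive real axis for every $\vb>0$; a quick check then shows no factor of (\ref{e:m_b}) contributing to $P_0$ can vanish. At the basepoint $\vb=1$ all $v$-parameters become $1$, and each factor telescopes:
\[
\frac{\alpha(s)^{-1}-1}{(\alpha(s)^{-1/2}+1)(\alpha(s)^{-1/2}-1)}=\frac{\alpha(s)^{-1}-1}{\alpha(s)^{-1}-1}=1,
\]
so $P_0(\vb=1)=1$. The realness of $m_{W_0 r}$ (which follows from the $W_0$-invariance of the expression together with the standard fact that $\bar r\in W_0 r$ for residual points) and the manifest realness of $P_{s,1}$ together imply that $P_0$ is real, and continuity plus non-vanishing then force $P_0>0$ everywhere.

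For $P_{s,1}$ I would use the uniform identity $v_{\alpha^\vee}^{-1}v_{2\alpha^\vee}^{-2}=\vb^{-k_s(\alpha)/2}$ valid for every $\alpha\in R_{s,1}$, checked case-by-case from the piecewise definition of $k_s$. Pairing $\alpha$ with $-\alpha$ and abbreviating $k=k_s(\alpha)$, $\bar c_\alpha=\alpha(\bar c)$: the paired numerator equals $-(\vb^{\bar c_\alpha/2}-\vb^{-\bar c_\alpha/2})^2$ (sign $-$), the paired first denominator factor $(v_{\alpha^\vee}^{-1}\vb^{-\bar c_\alpha/2}+1)(v_{\alpha^\vee}^{-1}\vb^{\bar c_\alpha/2}+1)$ is strictly positive, and the paired second denominator factor $(\vb^{-(k+\bar c_\alpha)/2}-1)(\vb^{(\bar c_\alpha-k)/2}-1)$ has sign $\operatorname{sign}(k^2-\bar c_\alpha^2)$, so each paired factor of $P_{s,1}$ has sign $\operatorname{sign}(\bar c_\alpha^2-k^2)$. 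The same pairing applied to (\ref{e:m-bar}) produces a paired numerator $-\bar c_\alpha^2$ and paired denominator $k^2-\bar c_\alpha^2$, hence the identical sign $\operatorname{sign}(\bar c_\alpha^2-k^2)$; multiplying over $\alpha\in R_{s,1,+}$ yields $\operatorname{sign}(P_{s,1})=\operatorname{sign}(\bar m_{W_0\bar c})$, and combining with $P_0>0$ finishes the argument.

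The hard part will be checking the uniform identity $v_{\alpha^\vee}^{-1}v_{2\alpha^\vee}^{-2}=\vb^{-k_s(\alpha)/2}$ across the three branches of the piecewise definition of $k_s$, particularly the doubled-root cases where the correct interpretation of $v_{2\alpha^\vee}$ is delicate, together with nailing down the realness of $m_{W_0 r}$ needed to conclude positivity of $P_0$. Once those foundational points are settled, everything reduces to the elementary sign arithmetic above.
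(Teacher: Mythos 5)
Your decomposition $m_{W_0 r} = P_0 \cdot P_{s,1}$ is the same split the paper implicitly uses, and your pair-by-pair sign analysis of $P_{s,1}$ is the same calculation the paper performs; the final answer agrees. Where you genuinely diverge is in the treatment of $P_0$. The paper further partitions $R_1\setminus R_{s,1}$ into $R_1^{s,-1}$ and $R_1^{s,z}$ and shows each sub-product is positive by a direct computation (the $R_1^{s,-1}$ factors are manifestly positive; for $R_1^{s,z}$ it invokes \cite[Theorem 3.27(v)]{Opd1} to see that the relevant polynomials in $\vb$ have no real zeros, hence keep the sign they take at $\vb=0$). You replace this with a basepoint-plus-connectedness argument: $P_0$ is non-vanishing on $\vb>0$, real-valued, continuous, and equals $1$ at $\vb=1$, hence positive. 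This is slicker and avoids the citation to \cite{Opd1}, which is genuinely nice, but it puts weight on the realness of $P_0$. Your justification — that $m_{W_0 r}$ is real (via $\bar r \in W_0 r$) and $P_{s,1}$ is real — is right in spirit, but strictly speaking only gives realness of $P_0=m/P_{s,1}$ on the open dense set where $P_{s,1}\neq 0$; one needs a sentence invoking continuity and density to close it, or one should just check the realness of $P_0$ directly by pairing each $\alpha\in R_1^{s,z}$ with the root $\alpha'$ for which $\alpha'(r)=\overline{\alpha(r)}$, and observing that each $R_1^{s,-1}$ factor is already real.

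The place you (correctly) flag as ``the hard part'' is the place where I would push back. Your uniform identity $v_{\alpha^\vee}^{-1}v_{2\alpha^\vee}^{-2}=\vb^{-k_s(\alpha)/2}$ does not hold verbatim across all three branches of the definition of $k_s$ quoted from \cite[(26)]{OpdSol2}: in the branches where $k_s(\alpha)=\log_{\vb}(v_{\alpha^\vee}^2)$ it forces $v_{2\alpha^\vee}=1$, which need not hold for $\alpha\in R_1\setminus R_0$. The paper sidesteps this by \emph{not} using a uniform identity; instead, in each doubled-root subcase it identifies which one of the two denominator factors is manifestly positive (and can be discarded) and which one carries the sign, and it is only that remaining factor which it rewrites in terms of $k_s$. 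Moreover, the convention for the square root $\alpha(r)^{-1/2}$ when $\alpha=2\beta$ is $\alpha(r)^{-1/2}=\beta(r)^{-1}$, which is \emph{negative} when $\beta(s)=-1$; your calculation implicitly takes the positive branch. You end up with the same final sign $\operatorname{sign}(\alpha(\bar c)^2-k_s(\alpha)^2)$ because changing the branch merely swaps which of the two denominator factors carries the sign, but the intermediate assertion ``the paired first denominator factor is strictly positive'' is false in the $\beta(s)=-1$ case. So the arithmetic is fine, but as written the $P_{s,1}$ step has a genuine gap precisely where you predicted one; the paper's casework is the content you still need to supply.
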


\begin{proof} For every $\alpha\in R_1$, $\alpha(s)$ is a root of unity. Write $R_1=R_{s,1}\sqcup R_1^{s,-1}\sqcup R_1^{s,z}$, where $R_1^{s,-1}=\{\alpha\in R_1 |\alpha(s)=-1\}$ and $R_1^{s,z}=\{\alpha\in R_1 |\alpha(s)^2\neq 1\}.$ If $\alpha$ is a  root in $R_1\setminus R_0$, then  $\alpha$ is a root of type $A_1$ in $B_n$ and from the classification of isolated semisimple elements $s$ in this situation, we see that $\alpha\in R_{s,1}.$

We break up the product expression for $m_{W_0r}$ according to the three types of roots: $R_{s,1}$, $R_1^{s,-1}$, $R_1^{s,z}$. 
Take $R_1^{s,-1}$ first and we analyze the contribution of its numerator and denominator. The numerator is a product of expressions $(-\alpha(c)-1)(-\alpha(c)^{-1}-1)>0$, since $\alpha(c)>0$, one factor for each positive root $\alpha\in R_1^{s,-1}$. For the denominator, for each positive root $\alpha\in R_1^{s,-1}$ (which by the remark above it is in $R_0$), we have a factor $(-v_{\alpha^\vee}^{-2}\alpha(c)^{-1}-1)(-v_{\alpha^\vee}^{-2}\alpha(c)-1)>0$. Therefore, the part of the expression corresponding to $R_1^{s,-1}$ is positive.

 Consider now $R_1^{s,z}.$ Its contribution equals 
$\frac
{\prod'_{\alpha\in R_1^{s,z}}(\alpha(s)\alpha(c)-1)}
{\prod'_{\alpha\in R_1^{s,z}}(\alpha(s)v_{\alpha^\vee}^{-2} \alpha(c)-1)}.$
The argument in \cite[Theorem 3.27(v)]{Opd1} shows that both the numerator and the denominator are polynomial expressions in $\vb$ with rational (in fact integer) coefficients. Moreover, since $\alpha(s)$ is not real, these polynomials do not afford real roots in $\vb$. It follows that they are always positive or always negative for real $\vb$. But it is clear that for $\vb=0$ the fraction above equals $1>0$, and therefore it is always positive. 

In conclusion, the sign of $m_{W_0r}$ equals the sign of the expression $$m_{W_0r}=\frac{\prod'_{\alpha\in R_{s,1}}(\alpha(c)^{-1}-1)}{\prod'_{\alpha\in R_{s,1}}(v_{\alpha^\vee}^{-1} \alpha(r)^{-1/2}+1)\prod'_{\alpha\in R_{s,1}}(v_{\alpha^\vee}^{-1}v_{2\alpha^\vee}^{-2}\alpha(r)^{-1/2}-1)}.$$
The numerator can be rewritten as $\prod_{\alpha\in R_{s,1}^+}\alpha(c)^{-1} (-1) (\alpha(c)-1)^2$ and therefore its sign is $(-1)^{n_s}$, where $n_s=\#\{\alpha\in R_{s,1}^+ | \alpha(c)\neq 1\}=\#\{\alpha\in R_{s,1}^+ | \alpha(\bar c)\neq 0\}$.  But this is also the sign of the numerator in (\ref{e:m-bar}).

Let $\alpha\in R_0^+\cap R_{s,1}$ be given. Then $\alpha$ and $-\alpha$ give a contribution in the denominator of $v_{\alpha^\vee}^{-4}(\alpha(c)^{-1}-v_{\alpha^\vee}^2)(\alpha(c)-v_{\alpha^\vee}^2)=v_{\alpha^\vee}^{-4}(\vb^{-\alpha(\bar c)}-\vb^{k_s(\alpha)})(\vb^{\alpha(\bar c)}-\vb^{k_s(\alpha)})$. Clearly, the sign of this expression is the same as the sign of $(-\alpha(\bar c)-k_s(\alpha))(\alpha(\bar c)-k_s(\alpha))$. 

Now suppose that $\alpha=2\beta$ with $\beta\in R_0$. If $\beta(s)=1$, the factors of the form $(v_{\alpha^\vee}^{-1}\alpha(r)^{-1/2}+1)=(v_{\alpha^\vee}\beta(c)^{-1}+1)$ are all positive and can be ignored. So the contribution in the denominator comes from the factors $(v_{\alpha^\vee}^{-1}v_{2\alpha^\vee}^{-2}\alpha(r)^{-1/2}-1)=(\vb^{-k_s(\alpha)/2}\beta(c)^{-1}-1).$ Grouping together the factors corresponding to $\alpha$ and $-\alpha$ as before and multiplying by powers of $\vb$, we see that the contribution to the sign is given by $(\vb^{\alpha(\bar c)/2}-\vb^{k_s(\alpha)/2})(\vb^{-\alpha(\bar c)/2}-\vb^{k_s(\alpha)/2})$. But this has the same sign as $(-\alpha(\bar c)-k_s(\alpha))(\alpha(\bar c)-k_s(\alpha))$. 

Finally, if $\beta(s)=-1$, then the  factor $(v_{\alpha^\vee}^{-1}v_{2\alpha^\vee}^{-2}\alpha(r)^{-1/2}-1)=-(v_{\alpha^\vee}^{-1}v_{2\alpha^\vee}^{-2}\beta(c)^{-1}+1)$ is negative and the contributions of these factors for $\alpha$ and $-\alpha$ cancel out. Thus we remain with the factor $(v_{\alpha^\vee}^{-1}\alpha(r)^{-1/2}+1)=(-\vb^{-k_s(\alpha)/2} \beta(c)+1)$ and the same analysis as in the previous case applies.

\end{proof}

\begin{rem}\label{r:sign-spec} Fix $b\in \Bc_{gm}$ and suppose $\vb_0\in \mathcal Q_b^{reg}\setminus \mathcal Q_b^{gen}$  is a nongeneric, but regular parameter. By definition, $b$ is in $\Bc_{\vb_0-m}$ and let $r_0=\lim_{\vb\to \vb_0}r_b(\vb).$   The  irreducible discrete series at $\vb_0$ in the family defined by $b$ is $\textup{ds}(b,\vb_0)=\epsilon(b;\vb_0)\textup{Ind}_D(b;\vb_0)$. By Theorem \ref{thm:uniform}, the sign $\epsilon(b;\vb)$ is locally constant and therefore taking $\lim_{\vb\to \vb_0}$ in the formula of Theorem \ref{thm:uniform}(g), we see that $\epsilon(b;\vb_0)$ equals the sign of $\lim_{\vb\to \vb_0} m_{W_0r_b(\vb)},$
where $m_{W_0r_b(\vb)}$ is defined by (\ref{e:m_b}).
\end{rem}

\subsection{Tables}
We present the explicit form of the canonically positive basis $\Bc_{gm}$, as well as the generic residual central characters $W_0r_b$, the constants $d_b$ in the case when $\Rc$ is a simply-connected, simple, root datum of type $C_n$ (with three parameters), $G_2$ and $F_4$ (with two parameters), and $E_6$, $E_7$, $E_8.$ For the affine Hecke algebra of a simple root datum of arbitrary isogeny, one may deduce the relevant information from the cases listed above, by specializing the parameters appropriately and by the use of induction and restriction. See \cite{OpdSol2}, \cite{Opd4}, \cite{Slo}, \cite{Ree4}.

\subsubsection{$C_n^{(1)}$} Let $X=\mathbb Z^n=\langle\ep_1,\dots,\ep_n\rangle$, $R_0=\{\pm\ep_i\pm\ep_j,~i\neq j,~\pm\ep_i\}$, $F_0=\{\ep_i-\ep_{i+1},\ep_n\}$. For simplicity of notation, we use the coordinates $\ep_i$ for the basis of $Y$ as well. We have $W=X\rtimes W_0=\mathbb Z R_0\rtimes W_0.$ The affine simple roots are $F=\{(\ep_i-\ep_{i+1},0\}\cup \{(2\ep_n,0)\}\cup (-2\ep_1,1)$ and the affine Dynkin diagram (with parameters) is 
\begin{equation}
\xymatrix{v_0&v_1\ar@{<=}[l]\ar@{-}[r]&v_1\ar@{-}[r]&\dotsb&v_1\ar@{-}[l]&v_2\ar@{=>}[l]},
\end{equation}
Here the affine simple root is $\alpha_0=(-2\ep_1,1)$ and it gets the label $v_0.$ Let $\Hc(v_0,v_1,v_2)$ be the affine Hecke algebra with generators $N_0,N_1,\dots,N_n$. We need to switch to the Bernstein presentation. The algebra $\Hc(v_0,v_1,v_2)$ is generated $N_1,\dots,N_n$ and $\theta_1^{\pm 1},\dots,\theta_n^{\pm 1}$ (here $\theta_i=\theta_{\ep_i}$) subject to the relations
\begin{equation}
\begin{aligned}
&\theta_i N_i-N_i \theta_{i+1}=(v_1-v_1^{-1})\theta_i,\quad 1\le i\le n-1;\\
&\theta_i N_j=N_j \theta_i,\quad |i-j|\ge 2;\\
&\theta_nN_n-N_n\theta_n^{-1}=(v_1-v_2^{-1})\theta_n+(v_0-v_0^{-1}),
\end{aligned}
\end{equation}
the usual Hecke relations and the commutation of the $\theta_i$'s. In the change of presentation, we set $\theta_i=N_0 N_{s_{\ep_i}}$, where $s_{\ep_i}\in W_0$ is the reflection corresponding to the root $\ep_i.$ 

It it immediate that the assignment $N_0\mapsto -v_0^{-1}$ gives a surjective algebra homomorphism onto the finite Hecke algebra $\Hc_f(C_n,v_1,v_2)$ of type $C_n$ with parameters $v_1,v_2.$ Translating to the Bernstein presentation, we find that the assignment
\begin{equation}
\begin{aligned}
&N_i\mapsto N_i,&\theta_i\mapsto -v_0^{-1} N_{s_{\ep_i}}\quad 1\le i\le n,\\
\end{aligned}
\end{equation}
extends to a surjective algebra homomorphism onto the Hecke algebra $\Hc_f(C_n,v_1,v_2)$ of finite type. This allows us to lift every simple $\Hc_f(C_n,v_1,v_2)$-module to a simple module of the affine Hecke algebra. The simple $\Hc_f(C_n,v_1,v_2)$-modules are parameterized by bipartitions $(\lambda,\mu)$ of $n$. It is particularly useful to use Hoefsmit's construction of such modules, see \cite[pages 322--325]{GP}, since in that realization the $N_{s_{\ep_i}}$ act diagonally. 

We recall the construction. Denote by $V_{(\lambda,\mu)}(v)$ the simple module of $\Hc_f(C_n)$ parameterized by $(\lambda,\mu)$. Its basis is indexed by left-justified decreasing standard tableaux of shape $(\lambda,\mu).$ Let $\mathcal y$ denote such a Young tableau. Then
\begin{equation*}
\begin{aligned}
N_{s_{\ep_j}}\cdot \mathcal y=\mbox{ct}(\mathcal y,n-j+1)\mathcal y, \text{where }
\mbox{ct}(\mathcal y,k)=\begin{cases}v_1^{2(y-x)} v_2,&\text{ if $k$ occurs in }\lambda,\\ -v_1^{2(y-x)} v_2^{-1},&\text{ if $k$ occurs in }\mu,
\end{cases}
\end{aligned}
\end{equation*}
where $(x,y)$ are the coordinates of the box in which $k$ occurs. (The coordinates $(x,y)$ of a box in the Young tableau increase to the right in $y$ and down in $x$.)

This means that in $\widetilde V_{(\lambda,\mu)}(\vb)$, the lift of $V_{(\lambda,\mu)}$ to the affine Hecke algebra, we have
\begin{equation}\label{e:theta-action}
\theta_j \cdot \mathcal y=-v_0^{-1} \mbox{ct}(\mathcal Y, n-j+1)\mathcal y.
\end{equation}
The condition that a simple module is a discrete series module is that the eigenvalues of the product $\theta_1\cdot\theta_2\dotsb\theta_j$ are all smaller than $1$ in absolute value, for all $j=1,\dots,n.$ From (\ref{e:theta-action}), we see that when the absolute value of the specialization of $v_0$ is much larger than that of $v_1$ and $v_2$, then every $V_{(\lambda,\mu)}(v)$ is a discrete series modules. Moreover, the central characters of these modules are all distinct at generic values of the parameters, and since by \cite{OpdSol2}, the dimension of the space $\overline \Rc_\Zb(W)$ equals the number of bipartitions of $n$, it follows that the set $\{\lim_{v\to 1}\widetilde V_{(\lambda,\mu)}(v)\}$ is an orthonormal basis of $\Yc_{gm}=\overline \Rc_\Zb(W)$. In order to determine the canonically positive basis $\Bc_{gm}$, it remains therefore to determine the signs $\ep(b,\vb)$ for each $b=\lim_{v\to 1}\widetilde V_{(\lambda,\mu)}(v).$

To this end, we need to examine the formal degrees of these modules. Denote $R_0^{sh}=\{\pm \ep_i\}$, the short roots, and $R_0^{lo}=\{\pm\ep_i\pm\ep_j\}$, the long roots. Specialize
\[v_1=v,\quad v_2=v^{m_++m_-},\quad v_0=v^{m_+-m_-},
\]
where $v>1$ and $m_-,m_+\in \mathbb R$.
The formula for the generic formal degree \cite[Theorem 4.6 and (40)]{OpdSol2} gives in our particular case that the formal degree of a discrete series $\pi(b,v)$ with central character $W_0 r_b(\vb)$ equals:
\begin{equation}\label{e:fdeg-C}
\begin{aligned}
\textup{fdeg}(\pi(b,\vb))=&\frac{d_b ~\ep(b,m_+,m_-)  \prod'_{\alpha\in R_0}(\alpha(r_b(v))^{-1}-1)}
{ \prod'_{\alpha\in R_0^{lo}} (v^{-2}\alpha(r_b(v))^{-1}-1) }\\
&\cdot\frac 1{\prod'_{\alpha\in R_0^{sh}} (v^{-2m_+} \alpha(r_b(v))^{-1} -1)\prod'_{\alpha\in R_0^{sh}} (v^{-2m_-} \alpha(r_b(v))^{-1} +1)},
\end{aligned}
\end{equation}
where $\ep(b,m_+,m_-)$ is a sign to be explicitated. 
From \cite[Theorem 4.7]{CKK} (see the remark in the proof of  \cite[Theorem 4.12]{Opd4}), we know that \begin{equation}d_b=1.\end{equation}
Fix a bipartition $(\lambda, \mu).$ The corresponding central character $W_0r_b(\vb)$ is the $W_0$-orbit of the string
\[ r_{(\lambda,\mu)}=\left((-v^{2(y-x)}v^{2m_-}| (x,y)\in\lambda) ; (v^{2(y'-x')}v^{-2m_+}| (x',y')\in\mu)\right).
\]
and therefore
\[\bar c_{(\lambda,\mu)}=(\bar c_\lambda(m_-), \bar c_\mu(-m_+)), \text{ where }\bar c_\lambda(m)=\left((y-x) +m| (x,y)\in\lambda\right).
\]
From Proposition \ref{p:m-bar}, $\ep(b,m_+,m_-)=\ep(\lambda,m_-)\ep(\mu,-m_+)$ where $\ep(\lambda,m)$ (and similarly for $\mu)$ equals the sign of the expression
\begin{equation}
\frac
{\prod'_{\alpha\in B_{|\lambda|}}\alpha(\bar c_{\lambda}(m))}
{\prod'_{\alpha\in D_{|\lambda|}}(\alpha(\bar c_{\lambda}(m))-1)\prod'_{\alpha\in A_1^{|\lambda |}}(\alpha(\bar c_{\lambda}(m))-m)} .
\end{equation}

 As noticed before, in the chamber where $m_->>m_+>>0$, the discrete series are $V_{(\lambda,\mu)}(\vb).$ Let $\epsilon(\lambda,\mu)=\lim_{m_\pm\to\infty}\ep(b,m_+,m_-).$ In conclusion, we have proved:

\begin{prop}\label{p:B-gm-C}
For the affine Hecke algebra of type $C_n^{(1)}$, the canonically positive basis $\Bc_{gm}$ of $\Yc_{gm}=\overline \Rc_{\mathbb Z}(W)$ is $$\Bc_{gm}=\{\ep(\lambda,\mu) \widetilde V_{(\lambda,\mu)}(1): ~ (\lambda,\mu) \text{ bipartition of }n\},$$ where  $\widetilde V_{(\lambda,\mu)}(1)$ is the irreducible $W$-representation with central character $W_0s_{(\lambda,\mu)}$, $s_{(\lambda,\mu)}=(\underbrace{-1,\dots,-1}_{|\lambda|}, \underbrace{1,\dots,1}_{|\mu|})$, and whose restriction to $W_0$ is the irreducible $W_0$-representation labelled by the bipartition $(\lambda,\mu).$
\end{prop}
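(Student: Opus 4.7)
The plan is to assemble the proposition from three inputs already available in the excerpt: the explicit action of the $\theta_i$ on Hoefsmit's basis (\ref{e:theta-action}), the sign formula of Proposition~\ref{p:m-bar}, and the uniqueness of the canonically positive basis from Corollary~\ref{cor:unique}. I will fix once and for all the chamber $C_0 := \{\vb \mid m_- \gg m_+ \gg 0\}$ and work there first, then transport the answer to $\vb = 1$ by the scaling limit.

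\textbf{Step 1: Identify the discrete series in $C_0$.} Given a bipartition $(\lambda,\mu)$ of $n$, let $\widetilde V_{(\lambda,\mu)}(\vb)$ denote the $\Hc_\vb$-module obtained by lifting the Hoefsmit module $V_{(\lambda,\mu)}(v)$ of $\Hc_f(C_n, v_1, v_2)$ along the surjection $N_0 \mapsto -v_0^{-1}$. Using (\ref{e:theta-action}), the $\Ac$-weights of $\widetilde V_{(\lambda,\mu)}(\vb)$ are monomials in $v_0^{-1}, v_1^{\pm 1}, v_2^{\pm 1}$ whose $v_0^{-1}$-exponent is $1$ and whose $v_2$-exponent is $\pm 1$. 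For $\vb \in C_0$ the contribution of $v_0^{-1}$ dominates, so every such weight has absolute value $<1$; hence every partial product $\theta_1 \theta_2 \cdots \theta_j$ acts with eigenvalues of absolute value $<1$. By the Casselman criterion \cite[Lemma 2.20]{Opd1} the module is a discrete series representation of $\Hc_\vb$.

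\textbf{Step 2: Count and show orthonormality.} The central characters $W_0 r_{(\lambda,\mu)}(\vb)$ are pairwise distinct for generic $\vb \in C_0$, so the modules $\{\widetilde V_{(\lambda,\mu)}(\vb)\}$ are pairwise inequivalent irreducible discrete series. Their scaling limits $b_{(\lambda,\mu)} := \lim_{\vb\to 1} \overline{\widetilde V_{(\lambda,\mu)}(\vb)}$ are orthonormal in $\overline{\Rc}_\Zb(W)$ because scaling is an isometry and discrete series are $EP_\Hc$-orthonormal \cite{OpdSol}. By \cite[Section~5]{OpdSol2} the number of bipartitions of $n$ equals $\dim_\Cb \overline{\Rc}_\Cb(W)$; hence $\{b_{(\lambda,\mu)}\}$ is actually an orthonormal basis of $\overline{\Rc}_\Zb(W)$, which equals $\Yc_{gm} = \Yc_{\vb-m}$ for $\vb\in C_0$. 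On the level of $W$-modules, the description of $b_{(\lambda,\mu)}$ (central character and $W_0$-restriction) follows from Corollary~\ref{cor:lim} applied to the pair $(s_{(\lambda,\mu)}, b_s)$ coming from the Hoefsmit construction.

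\textbf{Step 3: Determine the sign.} By Corollary~\ref{cor:unique}, the canonically positive basis $\Bc_{gm}$ is obtained by multiplying each $b_{(\lambda,\mu)}$ by the sign $\epsilon(b_{(\lambda,\mu)}; \vb)$ for any $\vb \in C_0$, and this in turn equals the sign of $m_{W_0 r_{b_{(\lambda,\mu)}}}(\vb)$ via Remark~\ref{r:sign-spec} extended to the generic chamber, i.e. via Proposition~\ref{p:m-bar}. The central character $W_0 r_{(\lambda,\mu)}$ factorises according to the $\pm 1$ blocks of $s_{(\lambda,\mu)}$, so $R_{s,1}$ decomposes as a product of root subsystems on the $\lambda$ and $\mu$ parts (of types $B_{|\lambda|} \times D_{|\lambda|} \times A_1^{|\lambda|}$ and similarly for $\mu$), and $\bar c_{(\lambda,\mu)} = (\bar c_\lambda(m_-), \bar c_\mu(-m_+))$. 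The expression (\ref{e:m-bar}) then splits as a product over the two blocks, giving $\epsilon(b_{(\lambda,\mu)}; \vb) = \epsilon(\lambda, m_-) \epsilon(\mu, -m_+)$ with the stated formula. Taking $m_\pm \to \infty$ produces the constants $\epsilon(\lambda,\mu)$, and these are precisely the signs attached in the statement.

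\textbf{Expected obstacle.} The only nontrivial point is to confirm that the sign of $m_{W_0 r_b}$ in the interior of $C_0$ is already computed by the asymptotic formula $\epsilon(\lambda,\mu)$; this uses both Proposition~\ref{p:m-bar} (which reduces a transcendental expression in $\vb$ to a combinatorial quantity depending only on $\bar c$ and $k_s$) and the local constancy of $\epsilon(b,\cdot)$ on connected components (Corollary~\ref{c:loc-const}), which guarantees the limit $m_\pm \to \infty$ inside $C_0$ records the correct value. Once this is established the proposition follows by collecting Steps~1--3.
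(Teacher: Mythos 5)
Your proposal reproduces the paper's argument step by step: lift Hoefsmit modules along $N_0\mapsto -v_0^{-1}$, use the diagonal $\theta$-action to identify them as discrete series in the chamber $m_-\gg m_+\gg 0$, count them against $\dim\overline{\Rc}_\Zb(W)$ to get an orthonormal basis, and extract the signs via Proposition~\ref{p:m-bar} and Corollary~\ref{cor:unique} by taking the asymptotic limit inside that chamber. This matches the paper's route; the only differences are expository (you cite Corollary~\ref{cor:lim} explicitly for the $W$-module description and Corollary~\ref{c:loc-const} for the legitimacy of the limit, steps the paper leaves implicit).
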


\subsubsection{$G_2$, $F_4$}\label{subsec:G2} For the affine Hecke algebras of types $G_2$ and $F_4$, we  work with the following coordinates. For $G_2$, the affine diagram is 
\[\xymatrix{\alpha_0\ar@{-}[r]&\alpha_1\ar@{-}[r]&\alpha_2\ar@{<=}[l]},
\]
with parameters $v(\bs_i)=v^{2k_i}$, $i=1,2$, while  
for $F_4$, it is
\[\xymatrix{\alpha_0\ar@{-}[r]&\alpha_1\ar@{-}[r]&\alpha_2&\alpha_3\ar@{<=}[l]&\alpha_4\ar@{-}[l]},
\]
with parameters $v(\bs_1)=v^{2k_1}$, $v(\bs_3)=v^{2k_2}$. In both cases, $v>1$ and $k_i\in\mathbb R.$

Let $\omega_i^\vee$ denote the fundamental coweights. A central character is of the form $r_b=s c$, where $c=v^{\sum_i a_i\om_i^\vee}$, where $s$ is an compact element of the torus.
The generic residual central characters are listed in Tables \ref{t:G2} and \ref{t:F4}. In this table, if $s\neq 1$, we specify it by the type of its centralizer in the second column. In the  third column, we give $c$ in the form $[a_i].$

 For each generic residual central character $W_0r_b$, we compute the function $m_b(\vb)$ using formula \cite[(40)]{OpdSol2}. To obtain the constant $d_b$, we compute the limits of $m_b(\vb)$ in equal parameter case, e.g., $k_1\to 1, k_2\to 1$ and also in the unequal parameter cases that appear in $E_8$. Then we compare the results with the formulas for formal degrees in \cite{Ree1} and \cite{Ree2}.  To complete the determination of the Langlands parameter, i.e., the representation of the component group, we computed, when needed, the $W_0$-structure of the specialization of the family of discrete series and compared it with the $K$-structure of the representations in Reeder's tables.
 The relevant unequal parameter cases that appear in $E_8$ are: the affine Hecke algebra of type $F_4$ that controls the subcategory of unipotent representations where the parahoric subgroup is $D_4$ in $E_8$, and the affine Hecke algebra of type $G_2$ for the subcategories of unipotent representations where the parahoric subgroup is $E_6$ in $E_8$.  For this comparison, we need to multiply the specialization of the formal degree in the unequal parameters Hecke algebra by the factor $\frac {\rho(1)}{P_J(v^2)}$, where $\rho$ is the appropriate cuspidal unipotent representation (whose dimension is given in the tables of \cite{C}) and $P_J(v^2)$ is the Poincar\'e polynomial of the finite Hecke algebra corresponding to the parahoric $J$.
 
In addition, the Iwahori-Hecke algebra for the quasisplit exceptional $p$-adic group ${}^3E_6$ (respectively, ${}^2E_7$) is isomorphic to a direct sum of three (respectively, two) copies  of an affine Hecke algebra with unequal parameters of type $G_2$ (respectively, $F_4$).  To compare  the formal degrees in these cases against the results of \cite{Ree2}, we need to multiply the specialization of formal degrees for the affine Hecke algebra by the factor
\[\frac {(q^{1/2}-q^{-1/2})^{n+1}}{|\Omega | \prod_{O} (q^{|O|/2}-q^{-|O|/2})},
\]
 where $n+1$ is the number of nodes in the affine Dynkin diagram, and $O$ ranges over the Galois orbits in the affine Dynkin diagram. See \cite[(25)]{Opd4} for more details. For the nonsplit inner forms, this procedure allows us to find the L-packet to which the representation should belong, but it is not sufficient in order to attach the representation of the component group. \footnote{This is a subtle question, see \cite{Opd4} for results in this sense. We plan to return to this question in future work.}



\subsection{Relation with Kazhdan-Lusztig parameters}\label{s:KL} Let $\Rc$ be a semisimple root datum. 
Let $G$ be the connected complex semisimple group with root datum $\Rc.$ Consider the generic affine Hecke algebra 
$\Hc$ with root datum $\Rc$ and equal parameters, i.e., $v(\bs)=v(\bs')=v$ for all $\bs,\bs'\in S.$ 
If $G$ is simply connected, the Kazhdan-Lusztig classification \cite{KL} applies to give a parameterization of the simple discrete series 
$\Hc_{v_0}$-modules, $v_0>1$, in terms of
\begin{equation}\label{e:KL}
\textup{DS}_{KL}(\Rc)=G\backslash \{(x,\phi)\mid x\in G\text{ elliptic},\ \phi\in \widehat{A(x)} \text{ such that } H^{top}(\mathcal B_x)^\phi\neq 0\}.
\end{equation}
This result has been extended by Reeder \cite{Ree4} (also see \cite{ABPS}) to the case where $\Rc$ has arbitrary 
isogeny type.
Recall that we say that $x$ is elliptic in $G$ if the conjugacy class of $x$ does not meet any proper Levi subgroup of $G$. Here we denoted by $A(x)=Z_G(x)/Z_G(x)^0Z(G)$ the component group of the centralizer of $x$ in $G$ (mod the center of $G$), by $\mathcal B_x$ the Springer fibre of $x$ in $G$, and by $H^{top}(\mathcal B_x)^\phi$ the $\phi$-isotypic component of $A(x)$ in the top cohomology of $\mathcal B_x$.

As it is well known, by the construction of Springer extended by Lusztig and Kato 
in the simply connected case (see for example \cite[section 8]{Ree1}), and further extended by Reeder in \cite[Section 3]{Ree4}
to the arbitrary semisimple case, that  
the full cohomology groups $H^\bullet(\mathcal B_x)^\phi$ carry an action of $W$. We define
\begin{equation}
\Bc_{DS}^{KL}=\{h_{x,\phi}:=H^\bullet(\mathcal B_x)^\phi\otimes\varepsilon\mid (x,\phi)\in \textup{DS}_{KL}(\Rc)\},
\end{equation}
where $\varepsilon$ is the sign character of $W$. 

Let $x$ be an elliptic parameter as above and write $x=su$ for the Jordan decomposition, with $s\in T_u.$ Let $\psi: SL(2,\mathbb C)\to G$ be the Lie homomorphism such that $\phi(\left(\begin{matrix}1&1\\0&1\end{matrix}\right))=u.$ Set $\tau=s\phi(\left(\begin{matrix} v^{-1} &0\\0 &v\end{matrix}\right))\in T_u T_v.$ Set $q=v^2$. If $\Rc$ is of simply connected type then formal degree is given by the formula (\cite{Opd4,Ree1}):
\begin{equation}\label{e:reeder}
\textup{fdeg}(\pi_{v,b_{x,\phi}})=\frac {\phi(1)}{|A(x)||Z(G)|} m_v(\tau), \text{ where }m_v(\tau)=q^{|R|/2}\frac{\prod'_{\alpha\in R}(\alpha(\tau)-1)}{\prod'_{\alpha\in R}(q\alpha(\tau)-1)}.
\end{equation}
\begin{prop}\label{prop:ss}
Equation (\ref{e:reeder}) holds for all semisimple root data $\Rc$. 
\end{prop}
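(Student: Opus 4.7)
I plan to reduce (\ref{e:reeder}) to the simply-connected case, which is \cite{Opd4,Ree1}. Let $G_{sc}\to G$ denote the isogeny of complex semisimple groups corresponding to the inclusion of weight lattices $X\subseteq P$, and set $K:=\ker(G_{sc}\to G)\subseteq Z(G_{sc})$, so that $|K|=|P/X|=|Z(G_{sc})|/|Z(G)|$. At equal parameters, the subgroup $\Omega:=P/X$ acts on $\Hc(\Rc)$ through automorphisms of the affine Dynkin diagram preserving the parameter function, yielding a crossed product decomposition $\Hc(\Rc_{sc})\simeq \Hc(\Rc)\rtimes \Omega$. With the normalized traces $\tau(N_w)=\delta_{w,e}$, this subalgebra inclusion is compatible with the Plancherel densities used to define formal degrees.

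Next I would match the Kazhdan-Lusztig-Reeder data on both sides. Writing $x=su$, I pick any lift $s_{sc}\in T_{sc}$ of $s$; the $|K|$ possible lifts form a single $K$-orbit. Combined with the unique unipotent lift $u_{sc}$, they yield an $\Omega$-orbit of elliptic parameters $x_{sc}^{(i)}=s_{sc}^{(i)}u_{sc}\in G_{sc}$. Because $K$ is central and finite, $Z_G(x)=Z_{G_{sc}}(x_{sc}^{(i)})/K$, and hence there is a canonical isomorphism $A(x)\cong A(x_{sc}^{(i)})$; in particular $\phi\in\widehat{A(x)}$ lifts canonically to $\phi_{sc}^{(i)}\in\widehat{A(x_{sc}^{(i)})}$ with $\phi_{sc}^{(i)}(1)=\phi(1)$. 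The corresponding Reeder element $\tau_{sc}^{(i)}=s_{sc}^{(i)}\psi_{sc}(\mathrm{diag}(v^{-1},v))$ projects to $\tau$, and since every root $\alpha\in R$ trivializes on $K\subseteq T_{sc}$ one has $\alpha(\tau_{sc}^{(i)})=\alpha(\tau)$, whence $m_v(\tau_{sc}^{(i)})=m_v(\tau)$. Writing $\Pi_{sc}^{(i)}$ for the Kazhdan-Lusztig-Reeder discrete series of $\Hc(\Rc_{sc})$ attached to $(x_{sc}^{(i)},\phi_{sc}^{(i)})$, all $|K|$ of these have a common formal degree.

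I would then apply Clifford theory for the crossed product $\Hc(\Rc_{sc})=\Hc(\Rc)\rtimes \Omega$. The bijective Kazhdan-Lusztig-Reeder parameterizations on both sides force the $\Omega$-action on $\widehat{\Hc(\Rc)}$ to fix the class $[\pi_{v,b_{x,\phi}}]$: otherwise the count of discrete series of $\Hc(\Rc_{sc})$ would be strictly less than $|K|$ times that of $\Hc(\Rc)$, contradicting the matching of geometric parameters above (a small-rank sanity check, e.g.\ $SL_2$ vs $PGL_2$, confirms this). Consequently $\pi_{v,b_{x,\phi}}$ extends projectively to $\Hc(\Rc_{sc})$, and the $\Pi_{sc}^{(i)}$ are precisely its $|\Omega|=|K|$ inequivalent extensions. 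Assuming the associated Clifford 2-cocycle $\eta_\pi$ is trivial (so each extension has the same dimension as $\pi_{v,b_{x,\phi}}$), the standard trace identity gives
\begin{equation*}
\textup{fdeg}_{\Hc(\Rc)}(\pi_{v,b_{x,\phi}})=\sum_{i=1}^{|K|}\textup{fdeg}_{\Hc(\Rc_{sc})}(\Pi_{sc}^{(i)})=|K|\cdot\frac{\phi(1)}{|A(x)|\cdot|Z(G_{sc})|}m_v(\tau),
\end{equation*}
which collapses to (\ref{e:reeder}) upon substituting $|Z(G_{sc})|=|K|\cdot|Z(G)|$.

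The main obstacle is the verification of triviality of the Clifford 2-cocycle $\eta_\pi$: a non-split extension would instead require the full identity $\sum_M (\dim M)\,\textup{fdeg}_{\Hc(\Rc_{sc})}(V_{\pi,M})$ indexed by irreducibles of $\Cb[\Omega,\eta_\pi^{-1}]$ of possibly higher dimension. This obstacle can be circumvented by appealing to Theorem~\ref{thm:uniform}(g): both sides of (\ref{e:reeder}) are rational functions of $v$ with common pole/zero structure prescribed by $m_b=m_{W_0r}$, so it suffices to verify the identity on a Zariski dense open subset of $v\in \Qc$ and conclude by rationality. On this dense locus the Clifford extension splits, and the argument above applies directly.
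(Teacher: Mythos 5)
Your general strategy (transfer formal degrees across the isogeny $G_{sc}\to G$ at equal parameters, using Reeder's Clifford theory) is the same as the paper's, but the key structural claim on which your computation rests is false, and the rest of the argument does not survive its failure.

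You assert that since $K$ is finite central, $Z_G(x)=Z_{G_{sc}}(x_{sc})/K$ and hence $A(x)\cong A(x_{sc})$. This is not correct. The map $Z_{G_{sc}}(x_{sc})/K\to Z_G(x)$ need not be surjective: an element $g\in G$ centralizes $x$ if and only if a lift $g_{sc}$ satisfies $g_{sc}x_{sc}g_{sc}^{-1}=x_{sc}\cdot k$ for \emph{some} $k\in K$, and those $g$ for which $k\neq 1$ do not come from $Z_{G_{sc}}(x_{sc})$. Already for $SL_2\to PGL_2$ and $x_{sc}=\mathrm{diag}(i,-i)$, one has $A_{SL_2}(x_{sc})=1$ but $A_{PGL_2}(x)\cong\Zb/2$, because the Weyl element $w$ satisfies $w\,x_{sc}\,w^{-1}=-x_{sc}$. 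The correct statement — and the actual content of the paper's proof — is the exact sequence $1\to A(x)\to A(x')\to C_x\to 1$ (with the paper's $G$ simply connected and $G'=G/C$), where $C_x$ is the image of $a\mapsto asa^{-1}s^{-1}$. Your identification $A(x)\cong A(x_{sc})$ silently sets $C_x=1$, which holds only in degenerate cases, and this is precisely where the factors $|C_x|/|C_{x,\phi}|$ and the twisted group algebra $E_{x,\phi}=\Cb[C_{x,\phi},\mu]$ in the paper's argument originate.

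Your reduction also uses the wrong algebraic framework: the relevant statement (Reeder, as cited in the paper) is that the non–simply-connected algebra arises as the subalgebra of $C$-invariants $\Hc'_v=(\Hc_v)^{C}$, where $C$ acts by characters of the lattice, and that the normalized trace restricts, $\tau'=\tau|_{\Hc'_v}$. Your crossed product claim $\Hc(\Rc_{sc})\simeq\Hc(\Rc)\rtimes\Omega$ is not available in general (it is a $\widehat C$-graded algebra with degree-zero piece $\Hc'_v$; the grading need not split, and in any case this is not how Reeder's restriction result is phrased), so the Clifford-theoretic setup you invoke is not the correct one. Finally, your workaround for the 2-cocycle — appealing to rationality in $v$ — cannot succeed, because the cocycle is a discrete invariant attached to the component group and the $C$-action, independent of $v$; it will not "split on a dense locus." The paper's computation simply carries the cocycle along: it uses $\dim(\rho_\phi^\psi)=\frac{|C_x|}{|C_{x,\phi}|}\dim(\phi)\dim(\psi)$ together with the multiplicity $\frac{|C|}{|C_{x,\phi}|}\dim(\psi)$ in the restricted Plancherel formula, and the two $|C_{x,\phi}|$ factors cancel, so no triviality assumption is ever needed.
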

\begin{proof} Let $G'$ be an arbitrary connected complex semisimple group with root datum $\Rc'$, and 
let $1\to C\to G\to G'\to 1$ be the universal covering of $G'$. Consider $x'=s'u'\in G'$ elliptic, 
and let $x=su\in G$ be a lifting of $x$. 
Let $A(x), A(x')\subset G_{ad}$ be the centralizers of the unramified Langlands parameters 
associated to $x$ and $x'$ as in the text above Proposition \ref{prop:ss} 
(these are finite subgroups, since $x$ and $x'$ are elliptic). 
We define a homomorphism $A(x')\to C$ by $a\to asa^{-1}s^{-1}\in C$, whose image we denote by $C_x$.
Similar to \cite[Section 3.3]{Ree4}, this give rise to an exact sequence:  
\begin{equation*}
1\to A(x)\to A(x')\to C_x\to 1
\end{equation*}
(more precisely, $A(x)$ and $A(x')$
are the images in $G_{ad}$ of Reeder's $A_{\tau,u}$ and $A_{\tau,u}^+$ respectively). 
Let $(x,\phi)\in \textup{DS}_{KL}(\Rc)$. Let $C_{x,\phi}\subset C_x$ 
be the isotropy group of $\phi$ with respect to the natural action of $C_x$ on the 
set of equivalence classes of irreducible characters of $A(x)$. Let $\mu$ denote the 
complex $2$-cocycle of $C_{x,\phi}$ associated to $\phi$, and 
$E_{x,\phi}=\mathbb{C}[C_{x,\phi},\mu]$ the corresponding twisted group algebra.
By Mackey theory  (see \cite[Section 3.3]{Ree4}) we have 
$E_{x,\phi}\simeq \textup{End}_{ \mathbb{C}[A(x')]}(\textup{Ind}_{A(x)}^{A(x')}\phi)$, and thus 
\begin{equation}\label{ex}
\textup{Ind}_{A(x)}^{A(x')}\phi=\bigoplus_{\psi\in\textup{Irr}(E_{x,\phi})}\psi\otimes \rho_\phi^\psi 
\end{equation}
with $\rho_\phi^\psi\in \textup{Irr}(A(x'))$  
as $E_{x,\phi}\otimes \mathbb{C}[A(x')]$-module.
Moreover, all irreducible characters $\rho$ of $A(x')$ 
appear as some $\rho\simeq \rho_\phi^\psi$, and $\rho_\phi^\psi\simeq \rho_{\phi'}^{\psi'}$ 
if and only if $\phi'$ is a twist of $\phi$ by an element of 
$C_x$, and $\psi'$ and $\psi$ correspond accordingly via this twist. By counting the
multiplicity of $\rho_\phi^\psi$ in the regular representation of $A(x')$ using $(\ref{ex})$ we see: 
\begin{equation}\label{dim}
\textup{dim}(\rho_\phi^\psi)=\frac{|C_{x}|}{|C_{x,\phi}|}\textup{dim}(\phi)\textup{dim}(\psi)
\end{equation}
On the other hand we consider the affine Hecke algebras $\Hc_v$ and $\Hc'_v$. Reeder \cite[Section 1.5, Lemma 3.5.2]{Ree4} showed 
$\Hc'_v=(\Hc_v)^C$, and 
\begin{equation}
\pi_{v,b_{x,\phi}}|_{\Hc'_v}=\bigoplus_{\psi\in\textup{Irr}(E_{x,\phi})}\psi\otimes \pi_{v,b_{x',\rho_\phi^\psi}}
\end{equation}
Now $\pi_{v,b_{x',\rho_\phi^\psi}}$ will arise as a summand of $\pi_{v,b_{\tilde{x},\tilde{\phi}}}|_{\Hc'_v}$ if and 
only if $(\tilde{x},\tilde{\phi})$ is a twist of $(x,\phi)$ by an element of $C$. 
The Plancherel decomposition of the trace $\tau'$ of the normalized Hecke algebra $\Hc'_v$ is obtained by 
restricting the Plancherel decomposition of the trace $\tau$ of $\Hc_v$, since  (see 
\cite[Paragraph 2.4.1]{Opd4}) we have $\tau'=\tau|_{\Hc'_v}$.
The above shows that in this restriction, the character of  
$\pi_{v,b_{x',\rho_\phi^\psi}}$ will appear with formal degree equal to the formal degree of $\pi_{v,b_{x,\phi}}$ with 
respect to $\Hc_v$, 
multiplied by the multiplicity 
\begin{equation}\label{mult}
\frac{|C|}{|C_{x,\phi}|}\textup{dim}(\psi)
\end{equation} 
Considering that clearly $m_v(\tau)=m_v(\tau')$, 
and using (\ref{e:reeder}), (\ref{ex}), (\ref{dim}), and (\ref{mult}), we see that 
$\textup{fdeg}(\pi_{v,b_{x',\rho_\phi^\psi}})=\frac {\rho_\phi^\psi(1)}{|A(x')||Z(G')|} m_v(\tau')$ as was to be proved.
\end{proof}
By \cite[Proposition 7.2]{Ree1}, $m_v(\tau)=q^{\dim \Bc_u}(q-1)^\ell |M_0^s| R(q)$, where $\ell=\dim T$, $M_0^s$ is as in \cite[Lemma 7.1]{Ree1}, and $R(q)$ is a rational function in $q$ that has the property that $R(0)=1$ and $R(1)\neq 0$. On the other hand, we know by \cite[Proposition 3.27(v)]{Opd1} that $m_v(\tau) q^{-\dim\Bc_u}$ must equal a scalar times a rational function in cyclotomic polynomials in $q$. This means that $R(q)$ is a scalar times a rational function where both the numerator and the denominator are products of cyclotomic polynomials $\Phi_n(q)$ with $n\ge 2$.  Since $R(0)=1$,  the scalar must be in fact $1$. But then $R(q)>0$ whenever $q$ is specialized to any real number greater than $-1$, which implies that $m_v(\tau)>0$ whenever $q>1.$

Notice that the same conclusion follows from our sign formula (\ref{e:m-bar}). In the equal parameter case, we have $k_s\equiv 1$, and therefore the number of $(-1)$ in (\ref{e:m-bar})  contributions equals the number of roots $\alpha(\bar c)<0$ plus the number of roots $\alpha(\bar c)<1.$ Because of integrality of the central character $\bar c$ and the fact that the roots that vanish of $\bar c$ come in pairs, it follows, that this is an even number, thus the sign of $\bar m_{W_0\bar c}$ is positive.

\begin{prop}
Suppose $\Rc$ is a semisimple root datum and let $\vb_0$ denote the specialization in the equal parameters case.  
\begin{enumerate}
\item[(a)] $\Bc_{\vb_0-m}=\{b_{x,\phi}:=\epsilon(x,\phi) h_{x,\phi}\mid h_{x,\phi}\in \Bc_{DS}^{KL}\}$, where $\epsilon(x,\phi)=\epsilon(b_{x,\phi};\vb_0)$ is the sign of the appropriate limit $\lim_{\vb\to \vb_0}m_{W_0r_b(\vb)}$ in the notation of Remark \ref{r:sign-spec}.
\item[(b)] If $\Rc$ is simply laced, then $\Bc_{gm}=\Bc_{DS}^{KL}$ and $d_{b_{x,\phi}}=\frac {\phi(1)}{|A(x)||Z(G)|}$ for all $b_{x,\phi}\in \Bc_{gm}.$
\item[(c)]Suppose $\Rc$ is a root datum of type $G_2$ or $F_4$. Then  $$\Bc_{gm}(G_2)=\{\epsilon(x,\phi) h_{x,\phi}\mid h_{x,\phi}\in \Bc_{DS}^{KL}\}$$ and  $$\Bc_{gm}(F_4)=\{\epsilon(x,\phi) h_{x,\phi}\mid h_{x,\phi}\in \Bc_{DS}^{KL}\}\cup \{b_{11}=H^\bullet(\Bc_{x_{B_4}})^{\textup{triv}}\otimes \varepsilon\},$$ with $x_{B_4}=s_{B_4}u_{(711)}$, where $Z_{G}(s_{B_4})$ is of type $B_4$ and $u_{(711)}$ is a representative of the subregular unipotent class in $B_4.$  The explicit signs $\epsilon(x,\phi)$ are given in the  sixth column in Tables \ref{t:G2} and \ref{t:F4}, while the constants $d_{b_{x,\phi}}$ are listed in the fourth column of the tables. 
\end{enumerate}
\end{prop}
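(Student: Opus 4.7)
The plan is to combine Theorem \ref{thm:uniform} with the Kazhdan--Lusztig--Reeder classification of discrete series at equal parameters and with Springer--Lusztig--Kato theory. Each of the three parts uses a different combination of these ingredients: (a) is a direct match at the equal-parameter point, (b) exploits the one-parameter nature of the simply-laced case to extend (a) to all of $\Bc_{gm}$, and (c) is a case-by-case analysis for the exceptional non-simply-laced data $G_2$ and $F_4$.

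For part (a), I observe that at $\vb_0$ the Kazhdan--Lusztig--Reeder classification furnishes a canonical bijection $\Delta_{\vb_0}\leftrightarrow \textup{DS}_{KL}(\Rc)$, while Theorem \ref{thm:uniform}(f) gives a canonical bijection $\Bc_{\vb_0-m}\to\Delta_{\vb_0}$, $b\mapsto\epsilon(b;\vb_0)\textup{Ind}_D(b;\vb_0)$. Composing, each $b\in\Bc_{\vb_0-m}$ picks out a pair $(x,\phi)\in\textup{DS}_{KL}(\Rc)$. To identify $b$ with $\epsilon(x,\phi)h_{x,\phi}$, I apply the scaling map to both sides: by construction $b=\lim_{\epsilon\to 0}\overline{\pi_{\vb_0^\epsilon}}$ for $\pi=\epsilon(b;\vb_0)\textup{Ind}_D(b;\vb_0)$, and the Springer--Lusztig--Kato construction applied to the KL discrete series $\pi_{v,(x,\phi)}$ shows that this elliptic limit equals $H^\bullet(\Bc_x)^\phi\otimes\varepsilon=h_{x,\phi}$, the sign twist being the standard Iwahori--Matsumoto manifestation in the scaling limit. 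Hence $b=\epsilon(b;\vb_0)h_{x,\phi}$, and Remark \ref{r:sign-spec} pins down $\epsilon(b;\vb_0)$ as the sign of $\lim_{\vb\to\vb_0}m_{W_0r_b(\vb)}$.

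For part (b), the simply-laced hypothesis forces $\Qc$ to be one-dimensional with uniform parameter $v_{\alpha^\vee}=v$, so every generic residual orbit remains residual at $\vb_0$, as a standard consequence of the positivity of $m_v(\tau)$ discussed after Proposition \ref{prop:ss}. Hence $\Bc_{gm}=\Bc_{\vb_0-m}$ and part (a) identifies the basis. For the constant $d_{b_{x,\phi}}$, compare Theorem \ref{thm:uniform}(g), which gives $\textup{fdeg}(\textup{Ind}_D(b;\vb_0))=d_bm_b(\vb_0)$, with Proposition \ref{prop:ss}, which gives $\textup{fdeg}(\pi_{v_0,(x,\phi)})=\frac{\phi(1)}{|A(x)||Z(G)|}m_{v_0}(\tau)$. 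Inspection of (\ref{e:m_b}) against (\ref{e:reeder}) shows that in the simply-laced equal-parameter setting $m_b$ and $m_v(\tau)$ coincide as rational functions, while positivity of $m_{v_0}(\tau)$ yields $\epsilon(x,\phi)=+1$ at $\vb_0$. Equating the two formulas forces the stated value of $d_{b_{x,\phi}}$, and by Theorem \ref{thm:uniform}(g) the same value holds for all $\vb\in\Qc_b^{reg}$.

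Finally, part (c) is proved case-by-case from the tables of the previous subsection. For $G_2$, inspection of Table \ref{t:G2} shows that every generic residual orbit is residual at equal parameters, so $\Bc_{gm}(G_2)=\Bc_{\vb_0-m}$ and part (a) applies directly. For $F_4$, all but one generic residual orbit remain residual at $\vb_0$, and these contribute the basis vectors of the form $\epsilon(x,\phi)h_{x,\phi}$. The exceptional orbit, whose mass function $m_b$ vanishes at $\vb_0$, contributes an additional basis vector $b_{11}\in\Bc_{gm}\setminus\Bc_{\vb_0-m}$. To pin down $b_{11}$ I restrict the family $\textup{Ind}_D(b_{11};\vb)$ to a real curve $\Cf\subset\Qc$ meeting $\vb_0$ transversely (Proposition \ref{prop:basicM}), form the limit tempered character $\textup{Ind}_D(b_{11},C;\vb_0)$, and take its scaling limit at $\vb=1$ via Proposition \ref{prop:limlim0}. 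The resulting virtual $W$-character has central character $W_0s_{B_4}$ of the form predicted by Table \ref{t:F4}, and is matched via the Springer correspondence with $H^\bullet(\Bc_{x_{B_4}})^{\textup{triv}}\otimes\varepsilon$ where $x_{B_4}=s_{B_4}u_{(711)}$. The main obstacle is this last Springer-theoretic identification, in particular the verification that the component-group character in question is the trivial one; with that in hand the signs $\epsilon(x,\phi)$ and constants $d_b$ are read off directly from Tables \ref{t:G2} and \ref{t:F4}.
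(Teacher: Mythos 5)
Your approach matches the paper's own proof in all essential respects: part (a) combines the scaling/restriction map with the Springer--Lusztig--Kato identification and Remark \ref{r:sign-spec}; part (b) deduces the sign and constant from Proposition \ref{prop:ss} and the positivity discussion following it; part (c) defers to the case-by-case computation of Tables \ref{t:G2} and \ref{t:F4}. The paper's own write-up is extremely terse (essentially one line per part), so your version is really a filled-out version of the same argument.

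One small inconsistency to flag in part (a): you write ``by construction $b=\lim_{\epsilon\to 0}\overline{\pi_{\vb_0^\epsilon}}$ for $\pi=\epsilon(b;\vb_0)\textup{Ind}_D(b;\vb_0)$'' and then state that this limit equals $h_{x,\phi}$; if both of those held you would get $b=h_{x,\phi}$, not $b=\epsilon(b;\vb_0)h_{x,\phi}$. By Definition \ref{defn:indD} it is $\textup{Ind}_D(b;\vb)$ (not the irreducible $\pi=\epsilon(b;\vb)\textup{Ind}_D(b;\vb)$) whose scaling limit is $b$, so the limit of the irreducible discrete series $\pi$ is $\epsilon(b;\vb_0)\,b$, and equating this with $h_{x,\phi}$ gives the stated $b=\epsilon(b;\vb_0)h_{x,\phi}$. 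The conclusion you draw is correct; only the intermediate equality has the sign displaced. Also, in part (c) the detour through the limit $\textup{Ind}_D(b_{11},C;\vb_0)$ of discrete series is unnecessary: the element $b_{11}\in\Bc_{gm}$ is already the scaling limit at $\vb=1$ of $\textup{Ind}_D(b_{11};\vb)$ for \emph{any} $\vb\in\Qc^{reg}_{b_{11}}$, and its identification with $H^\bullet(\Bc_{x_{B_4}})^{\textup{triv}}\otimes\varepsilon$ goes through the computed generic central character in Table \ref{t:F4} together with the generic Vogan central character map, as the paper does.
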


\begin{proof} As in Remark \ref{r:sign-spec}, for every $b\in \Bc_{\vb_0-m}$, $\textup{ds}(b;\vb_0)=\epsilon(b;\vb_0)\textup{Ind}(b;\vb_0)$ 
we know that the sign $\epsilon(b,\vb_0)$ is given by the sign of the limit of the generic $m$-function. Applying the restriction map, we get that $\textup{Res}(\textup{ds}(b;\vb_0)=\epsilon(b;\vb_0)\textup{Ind}(b;\vb_0)$. On the other hand, as explained above $\textup{Res}(\textup{ds}(b;\vb_0)$ equals an $ h_{x,\phi}$ for some $(x,\phi)\in \textup{DS}_{KL}(\mathcal R).$ This is the claim in part (a).

Part (b) follows immediately from (a) by Proposition \ref{prop:ss} and the discussion preceding 
the present proposition: in the simply-laced case, each limit $\lim_{\vb\to \vb_0}m_{W_0r_b(\vb)}$ 
equals one $m_v(\tau)$, and they are all positive.

Part (c) is contained in subsection \ref{subsec:G2}, where Tables \ref{t:G2} and \ref{t:F4} were computed.

\end{proof}

\begin{rem} If $\Rc$ is a simply connected root datum of type $B$ or $C$, then one can relate the elements of  $\Bc_{DS}^{KL}$ to the bipartitions  in the basis $\Bc_{gm}$ from Proposition \ref{p:B-gm-C} using the combinatorial algorithms of Slooten \cite{Slo}, see also \cite{Opd4}.  If $(x,\phi)$ is a discrete Kazhdan-Lusztig parameter as above, let $(\lambda(x,\phi),\mu(x,\phi))$ be the bipartition associated by the algorithms in {\it loc. cit.} Then $$\Bc_{DS}^{KL}=\{\epsilon(\lambda(x,\phi),\mu(x,\phi))\cdot  b_{(\lambda(x,\phi),\mu(x,\phi))} \mid (x,\phi)\in \textup{DS}_{KL}(\Rc)\},$$ where $\epsilon(\lambda(x,\phi),\mu(x,\phi))$ equals the sign of $\lim_{k_s(\alpha)\to 1} \bar m_{W_0\bar c}$ from (\ref{e:m-bar}).
\end{rem}

\subsection{Pin cover of the Weyl group} Suppose $\mathcal W$ is a finite Weyl group with its reflection representation $\mathcal E.$ Fix a positive definite $\mathcal W$-invariant symmetric bilinear form on $\mathcal E.$ Define the Clifford algebra $C(\mathcal E)$ and the pin cover $p:\widetilde{\mathcal W}\to \mathcal W$ as in \cite[section 3.1]{COT}. Let $\det$ be the determinant character of $\widetilde {\mathcal W}$ acting on $\mathcal E$. As in \cite[section 4.1]{COT}, define $\widetilde {\mathcal W}'$ to be equal to $\widetilde {\mathcal W}$, when $\dim \mathcal E$ is odd, and to equal $\ker\det$ (an index two subgroup), when $\dim \mathcal E$ is even.

If $\dim \mathcal E$ is odd, then $C(\mathcal E)$ has two non-isomorphic simple complex modules, we denote them $S^+$ and $S^-$. When $\dim\mathcal E$ is even, $C(\mathcal E)$ has a unique simple complex module whose restriction to the even part $C_0(\mathcal E)$ splits into a direct sum of two non-isomorphic modules, denoted again $S^+$ and $S^-$. We fix the choice of $S^+$ (and $S^-$) in both cases once and for all.

An irreducible $\widetilde {\mathcal W}'$-representation is said to be genuine if it does not factor through $p(\widetilde {\mathcal W}')$. Two non-isomorphic irreducible $\widetilde {\mathcal W}'$-representations are said to be associate if one is the $\det$-dual of the other, when $\dim \mathcal E$ is odd, and if they both occur in the restriction to $\widetilde {\mathcal W}'$ of an irreducible $\widetilde {\mathcal W}$-module, when $\dim\mathcal E$ is even. For example $\{S^+,S^-\}$ is a pair of associate  $\widetilde {\mathcal W}'$-representations. Denote by
$\textup{Irr}^2\widetilde {\mathcal W}'$ the set of associate genuine pairs.

We apply these constructions in the case when $\mathcal R$ is a semisimple, simply-connected root datum, $\mathcal W=W_s$, with $s$ an isolated element of $T_u$ and $\mathcal E=T_s(T)\cong E.$  For the connections with elliptic theory, see \cite[section 4]{COT} for example. Given $\{\widetilde\sigma^+,\widetilde\sigma^+\}$, let $\xi_s(\widetilde\sigma^+)=\xi_s(\widetilde\sigma^-)$ be a representative of the generic central character defined by  \cite[Theorem 3.2]{COT}. 
Denote $$\textup{Irr}^2_{gm}\widetilde {W_s}'=\{\{\widetilde\sigma^+,\widetilde\sigma^-\}\in \textup{Irr}^2\widetilde W_s' \mid \xi_s(\widetilde\sigma^+)\text{ is generically residual}\}$$ and
$$\textup{Irr}^2_{KL}\widetilde {W_s}'=\{\{\widetilde\sigma^+,\widetilde\sigma^-\}\in \textup{Irr}^2_{gm}\widetilde W_s' \mid \xi_s(\widetilde\sigma^+)(1)\text{ is residual}\},$$
where $\xi_s(\widetilde\sigma^+)(1)$ denotes the specialization at the equal parameter case $k_s=1.$ 

Write $\Bc_{gm}=\sqcup_{s} \textup{Ind}_s(\Bc_{gm,s})$ for the canonically positive basis, where $\Bc_{gm,s}$ is a certain orthonormal subset of  $\overline R_{\mathbb Z}(W_s)$. By \cite[Theorem 4.2]{COT}, for each $b_s\in\Bc_{gm,s}$ there exists a pair $(\widetilde\sigma_{b_s}^+,\widetilde\sigma_{b_s}^-)$ of associate genuine irreducible $\widetilde W_s'$-representations such that
$$b_s=\Delta({\widetilde\sigma_{b_s}^+,\widetilde\sigma_{b_s}^-}):=\frac{\widetilde\sigma_{b_s}^+-\widetilde\sigma_{b_s}^-}{S^+-S^-}\text{ in }\overline R_{\mathbb Z}(W_s).$$
Moreover, as explained in subsection \ref{subsec:dirac}, $\xi_s(\widetilde\sigma_{b_s}^+)=r_{b_s}$, in particular, $\{\widetilde\sigma_{b_s}^+,\widetilde\sigma_{b_s}^-\}\in\textup{Irr}^2_{gm}\widetilde {W_s}'.$ By the same results in {\it loc. cit.}, the assignment $b_s\mapsto \{\widetilde\sigma_{b_s}^+,\widetilde\sigma_{b_s}^-\}$ is injective. Thus if we define $\underline{\textup{Irr}}^2_{gm}\widetilde {W_s}'=\{(\widetilde\sigma_{b_s}^+,\widetilde\sigma_{b_s}^-)\mid b_s\in\Bc_{gm,s}\}$, then the map $\Delta$ defines a bijection $$\Delta:\underline{\textup{Irr}}^2_{gm}\widetilde {W_s}'\to \Bc_{gm,s}.$$
In fact, $\underline{\textup{Irr}}^2_{gm}\widetilde {W_s}'$ is just $\textup{Irr}^2_{gm}\widetilde {W_s}'$ with a particular choice of $\widetilde\sigma^+$ versus $\widetilde\sigma^-$.

\medskip

We can explain this choice independently in the case of $\textup{Irr}^2_{KL}\widetilde {W_s}'$. Since $\xi_s(\widetilde\sigma^+)(1)$ is a residual point, we know by \cite{Opd1} that $\xi_s(\widetilde\sigma^+)(1)$ satisfies the same combinatorial condition as the Bala-Carter condition for half of the middle element of a distinguished Lie triple in $Z_G(s)$.  Thus, the pair $\{\widetilde\sigma^+,\widetilde\sigma^-\} \in \textup{Irr}^2_{KL}\widetilde {W_s}'$ determines the (conjugacy class of an) element $u$, and therefore the elliptic element $x=su$ in the Kazhdan-Lusztig parameterization. To formalize this, denote by $u(\{\widetilde\sigma^+,\widetilde\sigma^-\})$ a representative of the unipotent class attached in this way. 

Let $\Omega_{\widetilde W_s}$ be the Casimir element of $\widetilde W_s$, e.g., \cite[(3.2.5)]{COT}. The scalar $\widetilde\sigma^+(\Omega_{\widetilde W_s})$ by which $\Omega_{\widetilde W_s}$ acts in $\widetilde \sigma^+$ (which does not depend on the choice of $\widetilde\sigma^+$ versus $\widetilde\sigma^-$) equals the squared norm of $\xi_s(\widetilde\sigma^+)$.

It remains to discuss how to associate a representation $\phi$ of the component group $A(x)$. This will also lead to the desired canonical choice for $\widetilde\sigma^\pm$. Let $x=su$ be an elliptic element of $G$. Recall that this means that $u$ is distinguished in $Z_G(s)$. Let $X_s(u,\phi)$ and $\sigma_s(u,\phi)$ denote the standard and irreducible $W_s$-representations afforded via Springer theory by $H^{\bullet}(\Bc^s_u)^\phi\otimes\varepsilon$ and $H^{2d^s_u}(\Bc^s_u)^\phi\otimes\varepsilon$ respectively, where $\Bc^s_u$ is the Springer fiber of $u$ in $Z_G(s)$ and $d^s_u=\dim \Bc^s_u$. As above, there exist associate irreducible representations $\widetilde\sigma_s(u,\phi)^\pm$ such that
$$X_s(u,\phi)\otimes (S^+-S^-)=\widetilde\sigma_s(u,\phi)^+-\widetilde\sigma_s(u,\phi)^-.$$
As noticed in \cite{CT,CH2}, if we write $X_s(u,\phi)=\sigma_s(u,\phi)+\sum_{u'>u}a_{(u',\phi')}X_s(u',\phi')$ (by the results of Borho-Macpherson), then one can deduce that $\widetilde\sigma_s(u,\phi)^\pm$ can only appear in $\sigma_s(u,\phi)\otimes S^\pm$ and in no other $X_s(u',\phi')\otimes S^\pm$. Moreover, by the Dirac theory, the scalar $\widetilde{\sigma}_s(u,\phi)^\pm(\Omega_{\widetilde W_s})$ equals the squared norm of half a middle element for a Lie triple of $u$, and this is the {\it minimal} such scalar that may appear for an irreducible constituent of $\sigma_s(u,\phi)\otimes S^\pm$. By \cite{Ci} (case-by-case) or \cite{CH2} (uniformly), $\widetilde\sigma_s(u,\phi)^+$ appears with multiplicity one in $\sigma_s(u,\phi)\otimes S^+$ (and similarly for the $-$ case). 

The conclusion is that $\widetilde\sigma_s(u,\phi)^+$ is characterized by the property that it is the unique irreducible constituent of  $\sigma_s(u,\phi)\otimes S^+$ for which the scalar by which $\Omega_{\widetilde W_s}$ acts in it is minimal among all the constituents of the tensor product. Moreover, $\widetilde\sigma_s(u,\phi)^+$ does not appear as such a ``minimal representation" in any other tensor product of this form. This gives a one-to-one correspondence between the sets $\{\sigma_s(u,\phi)\}$ and $\{\widetilde\sigma_s(u,\phi)^+\}.$ Given a $\widetilde \sigma^+$ in the latter set, denote by $\phi(\widetilde\sigma^+)$ the local system $\phi$ for the Springer representation in the former set.  In particular, we have a preferred representation $\widetilde\sigma^+$ in any pair belonging to $\textup{Irr}^2_{KL}\widetilde {W_s}'$. Thus we get a set $\underline{\textup{Irr}}^2_{KL}\widetilde {W_s}'$ of ordered pairs.
The discussion can be summarized in the following 

\begin{prop}\label{p:comparison}
Suppose $\Rc$ is a simply-connected semisimple root datum.  Then, with the canonical choice of representations $\widetilde\sigma^+$ as above, $$\bigsqcup_{s}{\textup{Ind}}_s\circ \Delta (\underline{\textup{Irr}}^2_{KL}\widetilde {W_s}')=\Bc^{KL}_{DS}.$$ Given a pair $(\widetilde\sigma^+,\widetilde\sigma^-)\in  \underline{\textup{Irr}}^2_{KL}(\widetilde {W_s}')$, the Kazhdan-Lusztig parameter $(x=su,\phi)$ is given by $u=u(\{\widetilde\sigma^+,\widetilde\sigma^-\})$ and $\phi=\phi(\widetilde\sigma^+)$ with the notation as in the previous paragraph. 
\end{prop}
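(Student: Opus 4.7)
The plan is to establish the identity pair-by-pair, using the orthogonal decomposition (\ref{eq:MackZbar}) of $\overline{\mc{R}}_\Zb(W)$ to reduce everything to a fixed isolated compact element $s \in T_u$. Both $\textup{Ind}_s \circ \Delta(\underline{\textup{Irr}}^2_{KL}\widetilde{W_s}')$ and the subset of $\Bc_{DS}^{KL}$ supported at central character $W_0s$ lie in $\overline{\mc{R}}_\Zb(W)_{W_0 s}$, and $\textup{Ind}_s$ is an isometric isomorphism from $\overline{\mc{R}}_\Zb(W_s)$ onto that component. Hence it suffices to show, for each pair $(\widetilde\sigma^+, \widetilde\sigma^-) \in \underline{\textup{Irr}}^2_{KL}\widetilde{W_s}'$, that the elliptic class $\Delta(\widetilde\sigma^+, \widetilde\sigma^-) \in \overline{\mc{R}}_\Zb(W_s)$ equals $[X_s(u,\phi)]$ for the pair $(u,\phi)$ described in the statement, and that $\textup{Ind}_s$ sends this to $h_{x,\phi}$.

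To recover the unipotent class, note that by the definition of $\textup{Irr}^2_{KL}\widetilde{W_s}'$ the specialization $\xi_s(\widetilde\sigma^+)(1)$ is a residual point for the graded affine Hecke algebra $\Hb(R_{s,1}, T_s(T), F_{s,1}; 1)$. The classification of residual points in \cite{Opd1}, combined with Bala-Carter theory, places such residual points in canonical bijection with distinguished unipotent classes in $Z_G(s)$, via the map sending the class of $u$ to the orbit of half the middle element of a Jacobson-Morozov triple for $u$. This intrinsic assignment produces $u = u(\{\widetilde\sigma^+, \widetilde\sigma^-\})$ from the pair, and yields $x = su$ as an elliptic element of $G$ with Springer fibre $\Bc_x = \Bc_u^s$.

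To identify the local system $\phi$, I would invoke the Dirac and Borho-Macpherson mechanism. By \cite[Theorem 3.2]{COT}, $\Omega_{\widetilde W_s}$ acts on $\widetilde\sigma^{\pm}$ by the squared norm $\|\xi_s(\widetilde\sigma^+)(1)\|^2$, which by the previous step equals the squared norm of a middle element for $u$. The Borho-Macpherson filtration of $X_s(u,\phi')$ by standard modules attached to strictly larger orbits shows that this value is the minimal Casimir eigenvalue occurring among the summands of $\sigma_s(u,\phi') \otimes S^+$, and by \cite{Ci,CH2} such a minimal-eigenvalue irreducible summand has multiplicity one and occurs in $\sigma_s(u,\phi') \otimes S^+$ for exactly one $\phi'$. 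This pins down $\phi := \phi(\widetilde\sigma^+)$. Passing to the elliptic quotient in the identity $X_s(u,\phi) \otimes (S^+ - S^-) = \widetilde\sigma_s(u,\phi)^+ - \widetilde\sigma_s(u,\phi)^-$ then yields $\Delta(\widetilde\sigma^+, \widetilde\sigma^-) = [X_s(u,\phi)]$ in $\overline{\mc{R}}_\Zb(W_s)$.

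Finally, I would apply the Mackey--Springer comparison: the $W$-equivariant Springer theory for disconnected centralizers, as formulated in \cite[Section 3]{Ree4}, identifies $H^\bullet(\Bc_x)^\phi \otimes \varepsilon$ with $\textup{Ind}_s([X_s(u,\phi)])$ inside the $W_0 s$-component of $\overline{\mc{R}}_\Zb(W)$. Combining with the previous paragraph gives $\textup{Ind}_s \circ \Delta(\widetilde\sigma^+, \widetilde\sigma^-) = h_{x,\phi}$, and the resulting correspondence $(\widetilde\sigma^+, \widetilde\sigma^-) \leftrightarrow (x,\phi)$ between the two indexing sets is manifestly bijective (both are parameterized by $W_0$-orbits of triples consisting of $s \in T_u$, a distinguished $u$ in $Z_G(s)$, and an admissible local system). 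The hardest step is the minimality and multiplicity-one assertion used to single out $\phi(\widetilde\sigma^+)$ among the Springer constituents of $\sigma_s(u,\phi') \otimes S^+$; this is the genuine substance of \cite{Ci,CH2}, while all remaining inputs are formal consequences of the Mackey, Dirac, and Springer-theoretic machinery already in place.
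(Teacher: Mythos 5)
Your proposal is correct and takes essentially the same route as the paper: the paper's argument is precisely the discussion immediately preceding the proposition (the Bala--Carter identification of $u$ via the specialized residual point, the Dirac/Casimir minimality plus Borho--Macpherson filtration and the multiplicity-one results of \cite{Ci,CH2} to single out $\phi(\widetilde\sigma^+)$, and the passage to elliptic classes), and you have reconstructed exactly that chain of reasoning. The only difference is organizational: you make the final Mackey--Springer comparison step --- identifying $\textup{Ind}_s([X_s(u,\phi)])$ with $h_{x,\phi}$ in $\overline{\Rc}_\Zb(W)_{W_0s}$ via \cite[Section 3]{Ree4} --- slightly more explicit than the paper does, which is a reasonable and harmless elaboration.
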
 

\begin{rem}
If, in addition $\Rc$ is simply-laced, this also implies that $\Bc_{gm}=\bigsqcup_{s}\textup{Ind}_s\circ \Delta (\underline{\textup{Irr}}^2_{KL}\widetilde {W_s}').$ When $\Rc$ is not simply-laced, the relation between  $\Bc_{\vb_0-m}$ and $\Bc^{KL}_{DS}$ (and therefore $\bigsqcup_{s}\textup{Ind}_s\circ \Delta (\underline{\textup{Irr}}^2_{KL}\widetilde {W_s}')$) was given in section \ref{s:KL}.
\end{rem}



\begin{rem}
The case of root data of type $E$ is particularly interesting.
Suppose $\Rc$ is a simply-connected, type $E$, root datum. Let $\{\widetilde\sigma^+,\widetilde\sigma^-\}\in  \underline{\textup{Irr}}^2_{gm}(\widetilde {W_s}')$ be given, with the notation as above. Then the unipotent class of $u(\{\widetilde\sigma^+,\widetilde\sigma^-\})$ from Proposition \ref{p:comparison} can be effectively determined as follows. The key, empirical, observation is that for root systems of type $E$, the squared norm of the middle element of a quasidistinguished Lie triple uniquely identifies the unipotent class
Hence the scalar by which $\Omega_{\widetilde W_s}$ acts in $\widetilde \sigma^+$ already identifies uniquely the unipotent class of $u(\{\widetilde\sigma^+,\widetilde\sigma^-\})$!
\end{rem}

\subsection{Rationality of characters of discrete series} In this subsection, we discuss the character value ring for the families of generic discrete series considered before. In light of Corollary \ref{cor:cont}, we know that for every generic residual central character $W_0r$ and for every $\vb\in\Qc^{reg}_{W_0r}$, the central functional given by the ``stable combination" of generic characters (\ref{eq:dschar}) takes values in $Q_\Lambda$, the quotient field of $\Lambda$, i.e.:
\begin{equation}\label{eq:rational-sum}
\sum_{\chi\in \textup{DS}_{W_0r(\vb)}}c_{\chi,C}\chi_\vb(h)\in Q_\Lambda,\text{ for all }h\in \Hc_\Lambda.
\end{equation}

\begin{prop}Suppose the root datum $\Rc$ of the generic affine Hecke algebra $\Hc_\Lambda$ is simple and not simply-laced. Let $W_0r$ be a generic residual central character and let $\vb\in\Qc^{reg}_{W_0r}$ be given. Then for every discrete series character $\chi\in \textup{DS}_{W_0r(\vb)}$, we have $\chi_\vb(h)\in Q_\Lambda,$ for all $h\in \Hc_\Lambda.$
\end{prop}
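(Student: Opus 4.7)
The plan is to reduce the proposition to a \emph{multiplicity-one} statement for the generic Vogan central character map $gcc_{\Bc} : \Bc_{gm} \to W_0\backslash\textup{Res}(\Rc)$ of Theorem~\ref{thm:Vogan}: I will claim that in the simple non-simply-laced case this map is a \emph{bijection}. Granting this, pick any $\vb' \in \Qc^{gen}_{W_0r}$ (a Zariski-dense open of $\Qc^{reg}_{W_0r}$); then $\textup{DS}_{W_0r(\vb')}=\textup{DS}_{W_0r,\vb'}$ consists of a single character $\chi_{\vb'}$, so that the positive rational combination on the left-hand side of (\ref{eq:rational-sum}) collapses to
\[
\Sigma_{W_0r}(h;\vb') \;=\; c_{\chi,C}\,\chi_{\vb'}(h), \qquad c_{\chi,C}\in\Qb_+.
\]
Corollary~\ref{cor:cont} then yields $\chi_{\vb'}(h) = c_{\chi,C}^{-1}\,\Sigma_{W_0r,C}^\Cf(h) \in Q_\Lambda$ for every $h \in \Hc_\Lambda$, establishing the claim on the Zariski-dense open $\Qc^{gen}_{W_0r}\cap C$.

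Next I would propagate this rationality to all of $\Qc^{reg}_{W_0r}$. The multiplicity-one hypothesis forces the index set $\Ic$ of Proposition~\ref{prop:basicM}(vi) to be a singleton, so $\chi$ is realised by a single $\Hc_\Rfut$-module $L$ whose character lies a priori only in the function field $\Lf$ of the branched cover $\Uft$. However the previous paragraph identifies $\chi(h)$ with an element of $\Kf = Q_\Lambda$ on a Zariski-dense subset of $\Uf$, and by uniqueness of rational extension the two representatives must coincide in $\Lf$. Hence $\chi(h) \in Q_\Lambda$ globally; a posteriori the cover $\phi$ splits over the relevant component and $L$ is already defined over $\Rfu$.

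It remains to verify the multiplicity-one bijection in the four non-simply-laced simple series. For $C_n^{(1)}$ this is essentially the content of the discussion immediately preceding Proposition~\ref{p:B-gm-C}: the generic residual central characters $W_0 r_{(\lambda,\mu)}$ attached to distinct bipartitions are combinatorially distinct $W_0$-orbits in $T_\Lambda$, thanks to the three independent Hecke parameters in which the defining strings $r_{(\lambda,\mu)}$ separate. The same Hoefsmit-type analysis disposes of $B_n^{(1)}$. For the exceptional types $G_2$ and $F_4$ the bijectivity can be read off directly from Tables~\ref{t:G2} and~\ref{t:F4}, where all listed generic residual central characters are pairwise distinct. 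Arbitrary non-simply-laced semisimple isogeny classes reduce to these four series via Lusztig's isomorphism (\ref{eq:red}) and Clifford theory for the (rather restricted) twist groups $\Gamma_s$.

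The \emph{main obstacle} is this bijectivity claim itself. What makes it hold in the non-simply-laced simple setting, and demonstrably fail for simply-laced equal-parameter Hecke algebras, is the presence of enough genuinely independent Hecke parameters $\vb$ to separate all the generic residual orbits $W_0 r_b$; the argument is therefore intrinsically parameter-sensitive and appears to require the explicit classification of generic residual points carried out in Section~4 rather than a uniform structural proof.
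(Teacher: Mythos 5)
Your reduction is based on the claim that $gcc_{\Bc}:\Bc_{gm}\to W_0\backslash\textup{Res}(\Rc)$ is a \emph{bijection} in the simple, non-simply-laced case, i.e.\ that each generic residual central character carries a single discrete series family. You then assert that this bijectivity ``can be read off directly from Tables~\ref{t:G2} and~\ref{t:F4}, where all listed generic residual central characters are pairwise distinct.'' That reading of Table~\ref{t:F4} is simply wrong: $b_8$ and $b_9$ have the \emph{same} generic residual central character $[0,k_1,0,k_2-k_1]$ (with $s=1$), hence the fibre of $gcc_{\Bc}$ over this orbit has two elements. This is not a corner case you can wave away -- it is precisely the one place in the non-simply-laced simple world where multiplicity-one fails, and it is the entire content of the proposition: where multiplicity is one, rationality does indeed follow immediately from (\ref{eq:rational-sum}), exactly as you say and exactly as the paper says. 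What remains, and what your proposal has no mechanism to address, is to split the rational quantity $\pi_{\vb,I}+\pi_{\vb,II}$ into two rational summands for the $F_4$ central character $[0,k_1,0,k_2-k_1]$.

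The paper disposes of this $F_4$ case by a direct module-theoretic argument that has no analogue in your proposal: one checks (using Reeder's explicit weight diagram for the $10$-dimensional module labelled $[A_1E_7(a_5),-21]$) that $\pi_{\vb,II}$ is $\Ac$-semisimple with one-dimensional weight spaces, writes down a weight-vector basis in which both the $\theta_x$ and the $N_{\bs}$ act by matrices with entries in $Q_\Lambda$, and then uses the three $N_{\bs}\mapsto -N_{\bs}^{-1}$ involutions to cover the remaining sign quadrants of $(k_1,k_2)$. If you want to rescue your approach, you must either (i) supply this explicit rationality argument for one of the two constituents (since rationality of $\pi_{\vb,I}+\pi_{\vb,II}$ plus rationality of one summand gives the other), or (ii) establish rationality by some other structural means that does not presuppose multiplicity one. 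Your ``propagation'' step from $\Qc^{gen}_{W_0r}$ to $\Qc^{reg}_{W_0r}$ is also conditioned on $|\Ic|=1$ in Proposition~\ref{prop:basicM}(vi), so it collapses in exactly the same place.
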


\begin{proof}
By the classification of generic discrete series families \cite{Opd1} and \cite{OpdSol2}, we know that, when $\Rc$ is not simply-laced, the cardinality of $\textup{DS}_{W_0r(\vb)}$ is always $1$, except if $\Rc=F_4$ and $W_0r$ is the central character $[0,k_1,0,k_2-k_1]$ from Table \ref{t:F4}. Therefore, with this exception, the claim follows from (\ref{eq:rational-sum}) immediately. 

Let's consider now this exceptional residual central character in $F_4$. In the same coordinates as in Table \ref{t:F4}, we have $r=v^{2[0,k_1,0,k_2-k_1]}$ and $\Qc^{reg}_{W_0r}=\{k_1,k_2\in\mathbb R\mid k_1k_2\neq 0\}.$ There are always two families of discrete series $\pi_{\vb,I}=\ep(b_8;\vb)\textup{Ind}_D(b_8;\vb)$ and $\pi_{\vb,II}=\ep(b_9;\vb)\textup{Ind}_D(b_9;\vb)$ for $\vb\in \Qc^{reg}_{W_0r}$, where $b_8,b_9$ are as in Table \ref{t:F4}. From (\ref{eq:rational-sum}), we know that $\pi_{\vb,I}+\pi_{\vb,II}$ takes values in $Q_\Lambda$, so it suffices to show that $\pi_{\vb,II}$ has the same property. Assume first that $k_1>0$ and $k_2>0$. Then $\pi_{\vb,II}$ is a $10$-dimensional module that has the property that it is  $\Ac$-semisimple, i.e., each generalized weight space under the action of Bernstein's abelian subalgebra $\Ac$ is one-dimensional. This claim follows from \cite[page 80]{Ree1}, where the weight diagram of this module is given under the label $[A_1E_7(a_5),-21]$.  Let $\textup{Wt}(\pi_{\vb,II})$ denote the set of weights. As it is well known, we may choose a basis of the module consisting of weight vectors $\mathcal y_\lambda$, $\lambda\in \textup{Wt}(\pi_{\vb,II})$ such that for every $\bs\in S_0$, the action of $N_\bs$ is given by
$$N_\bs\cdot \mathcal y_\lambda=\begin{cases} \frac{v(\bs)-v(\bs)^{-1}}{1-\theta_{-\alpha}(\lambda)}\mathcal y_\lambda,&\text{ if }\bs(\lambda)\notin \textup{Wt}(\pi_{\vb,II}),\\
\frac{v(\bs)-v(\bs)^{-1}}{1-\theta_{-\alpha}(\lambda)}\mathcal y_\lambda+\left(v(\bs)^{-1}+\frac{v(\bs)-v(\bs)^{-1}}{1-\theta_{-\alpha}(\lambda)}\right)\mathcal y_{\bs(\lambda)}&\text{ if }\bs(\lambda)\in \textup{Wt}(\pi_{\vb,II}),\\
\end{cases}
$$ 
where $\bs=\bs_\alpha.$ This means that both types of generators, $\theta_x$ and $N_\bs$ act in this basis via matrices with entries in $Q_\Lambda$, and the claim follows.

To treat the other three quadrants in $k_1,k_2$, notice that the affine Hecke algebra of type $F_4$ has three involutions of the form $N_\bs\mapsto -N_\bs^{-1}$, where $\bs$ ranges over the long simple reflections, the short simple reflections, and all the simple reflections, respectively. (The last one is the Iwahori-Matsumoto involution.) Applying these involutions to the modules $\pi_{\vb,I}$ and $\pi_{\vb,II}$ from the $k_1>0,k_2>0$ case, we obtain the discrete series in the other three cases.

\end{proof}

\newpage
\newgeometry{left=3cm,bottom=0.1cm}

\begin{landscape}

\begin{table}[h]
\caption{$G_2$\label{t:G2}}
\begin{tabular}{|c|c|c|c|c|c|c|c|c|c|}
\hline
$b$ &$s$ & $W_0 c$ &$d_b$ &$G_2$   &$\epsilon$ &$E_6\subset E_8$ &$\epsilon$ &${}^3E_6$ &$\epsilon$\\

\hline
$b_1$ &$1$ &$ {[k_1,k_2]}$ &$1$ &$[G_2,1]$ &$1$ &$[A_2E_6,\theta]$ &$1$ &$E_6$ &$1$\\
\hline
$b_2$ &$1$ &$ {[k_1,-k_1+k_2]}$ &$1$ &$[G_2(a_1),(21)]$ &$-1$ &$[A_2E_6(a_1),\theta]$ &$-1$ &$E_6(a_1)$ &$-1$\\
\hline
$b_3$ &$1$ &$ {[k_1, \frac 12 (-k_1+k_2)]}$ &$1/2$ &$[G_2(a_1),(3)]$ &$1$ &$[A_2E_6(a_3),\theta]$ &$1$ &$E_6(a_3)$ &$1$\\
\hline
$b_4$ &$2A_1$ &$  {[-\frac 12 {k_1}-\frac 32 k_2,k_2]}$ &$1/2$ &$[2A_1,1]$ &$1$ &$[A_1A_2A_5,\theta]$ &$1$ &$A_1A_5$ &$1$\\
\hline
$b_5$ &$A_2$ &$  {[k_1,-k_1]}$ &$1/3$ &$[A_2,1]$ &$1$ &$[A_8,\theta]$ &$1$ &$A_2^3$ &$1$\\
\hline
\end{tabular}
\end{table}


\begin{table}[h]
\caption{$F_4$\label{t:F4}}
\begin{tabular}{|c|c|c|c|c|c|c|c|c|c|}
\hline
$b$ &$s$ & $W_0 c$ &$d_b$  &$F_4$  &$\epsilon$ &$D_4\subset E_8$ &$\epsilon$ &${}^2E_7$ &$\epsilon$\\

\hline
$b_1$ &$1$ &$ {[k_1,k_1,k_2,k_2]}$ &$1$ &$[F_4,1]$ &$1$ &$[A_1E_7,-]$&$1$ &$E_7$ &$1$\\
\hline
$b_2$ &$1$  &$ {[k_1,k_1,k_2-k_1,k_2]}$ &$1$ &$[F_4(a_1),-]$ &$-1$ &$[A_1E_7(a_4),--]$ &$-1$ &$E_7(a_1)$ &$-1$\\
\hline
$b_3$ &$1$  &$ {[k_1,k_1,k_2-k_1,k_1]}$ &$1$ &$[F_4(a_1),+]$ &$1$ &$[A_1E_7(a_2),-]$ &$1$ &$E_7(a_2)$ &$1$\\
\hline
$b_4$  &$1$ &$ {[k_1,k_1,k_2-2k_1,k_2]}$ &$1$ &$[F_4(a_3),(211)]$ &$1$ &$[A_1E_7(a_3),+-]$ &$1$ &$E_7(a_3)$ &$1$\\
\hline
$b_5$  &$1$ &$ {[k_1,k_1,k_2-2k_1,2k_1]}$ &$1$ &$[F_4(a_2),+]$ &$1$ &$[A_1E_7(a_3),-+]$ &$-1$ &$E_7(a_3)$ &$-1$\\
\hline
$b_6$  &$1$ &$ {[k_1,k_1,k_2-2k_1,k_1]}$ &$1$ &$[F_4(a_3),(31)]$ &$-1$ &$[A_1E_7(a_4),+-]$ &$1$ &$E_7(a_4)$ &$1$\\
\hline
$b_7$  &$1$ &$ {[k_1,k_1,k_2-2k_1,-2k_2]}$ &$1$ &$[F_4(a_2),-]$ &$-1$ &$[A_1E_7(a_1),-]$ &$-1$ &$E_7(a_4)$ &$-1$\\
\hline
$b_8$  &$1$ &$ {[0,k_1,0,k_2-k_1]}$ &$1/6$ &$[F_4(a_3),(4)]$ &$1$ &$[A_1E_7(a_5),-3]$ &$1$ &$E_7(a_5)$ &$1$\\
\hline
$b_9$  &$1$ &$ {[0,k_1,0,k_2-k_1]}$ &$1/3$ &$[F_4(a_3),(22)]$ &$1$ &$[A_1E_7(a_5),-21]$ &$1$ &$E_7(a_5)$ &$1$\\
\hline
$b_{10}$  &$B_4$ &$ {[\frac {k_1}2,k_1,k_2,-3k_1-2k_2]}$ &$1/2$  &$[B_4,+]$ &$1$ &$[D_8,-]$&$1$ &$A_1D_6$ &$1$\\
\hline
$b_{11}$  &$B_4$ &$ {[2k_1,-k_1,k_2,-k_1-2k_2]}$ &$1/2$ &non-ds & &$[D_8(5,11),-]$ &$1$ &$A_1D_6(3,9)$ &$-1$\\
\hline
$b_{12}$  &$B_4$ &$  {[0,k_1,-k_1+k_2,-2k_2]}$ &$1/2$  &$[B_4(531),\epsilon'']$ &$-1$ &$[D_8(1,3,5,7),r]$ &$1$ &$A_1D_6(5,7)$ &$-1$\\
\hline
$b_{13}$  &$B_4$ &$  {[k_1,k_1,-2k_1+k_2,k_1-2k_2]}$ &$1/2$  &$[B_4(531),1]$ &$1$ &$[D_8(7,9),-]$ &$-1$ &non-ds &\\
\hline
$b_{14}$  &$B_4$ &$  {[k_1,k_1,-3k_1+k_2,3k_1-2k_2]}$ &$1/2$ &$[B_4(531),\epsilon']$ &$-1$ &$[D_8(3,13),-]$ &$-1$ &non-ds &\\
\hline
$b_{15}$&$C_3A_1$  &$  {[-2k_1-3k_2,k_1,k_2,k_2]}$ &$1/2$ &$[C_3\times A_1,+]$  &$1$ &$[A_3D_5,-1]$ &$1$ &$A_1D_6$ &$1$\\
\hline
$b_{16}$ &$C_3A_1$ &$  {[-2k_1,k_1,-k_2,2k_2]}$ &$1/2$ &$[C_3(42)\times A_1,++]$ &$1$ &$[A_3D_5(3,7),-1]$ &$-1$ &$A_1D_6(5,7)$ &$1$\\
\hline
$b_{17}$&$C_3A_1$  &$  {[-2k_1+3k_2,k_1,-k_2,-k_2]}$ &$1/2$ &$[C_3(42)\times A_1,+-]$\ &$-1$ &non-ds & &$A_1D_6(3,9)$ &$-1$\\
\hline
$b_{18}$ &$2A_2$ &$ {[k_1,-k_1-2k_2,k_2,k_2]}$ &$1/3$ &$[2A_2,1]$ &$1$ &$[A_1A_2A_5,-1]$ &$1$ &$A_2A_5$ &$1$\\
\hline
$b_{19}$ &$A_3A_1$ &$ {[k_1,k_1,-\frac {3k_1}2 -\frac  {k_2}2,k_2]}$ &$1/4$ &$[A_1A_3,1]$ &$1$ &$[A_1A_7,-]$ &$1$ &$A_1A_3^2$ &$1$\\
\hline
\end{tabular}
\end{table}
\end{landscape}

\newgeometry{left=3cm,bottom=0.1cm}

\end{document}